\newcommand{\Hit}{\operatorname{Hit}}
\newcommand{\PSL}{\operatorname{PSL}}
\newcommand{\bR}{\mathbb{R}}
\newcommand{\diag}{\operatorname{diag}}
\newcommand{\Tr}{\operatorname{Tr}}
\newcommand{\sign}{\operatorname{sign}}
\newcommand{\Ad}{\operatorname{Ad}}
\newcommand{\ad}{\operatorname{ad}}
\newcommand{\bS}{\mathbb{S}}
\newcommand{\kf}{\mathbf{\mathsf{B}}}
\newcommand{\ri}{\operatorname{i}}
\newcommand{\rd}{\operatorname{d}}
\newcommand{\re}{\operatorname{e}}
\renewcommand{\epsilon}{\varepsilon}
\newtheorem{lemma}{Lemma}[section]
\newtheorem{theorem}[lemma]{Theorem}
\newtheorem{corollary}[lemma]{Corollary}
\theoremstyle{definition}
\newtheorem{question}[lemma]{Question}
\newtheorem{example}[lemma]{Example}
\theoremstyle{remark}
\newtheorem{claim}[lemma]{Claim}
\title{Generic Properties of Hitchin Representations}
\author{Hongtaek Jung}
\address{Department of Mathematical Sciences\\ Seoul National University\\ Seoul} 
\begin{document}

\begin{abstract}
Let $G$ be a split real form of a complex simple adjoint group whose Weyl group contains $-1$, let $\lambda$ be the Jordan projection of $G$, and let $S$ be a closed orientable surface of genus at least 2. For a $G$-Hitchin representation $\rho$, we define the set $J(\rho):=\{\lambda(\rho(x))\,|\,x\in\pi_1(S)\setminus \{1\}\}$. Choose any hyperplane $H$ in the maximal abelian subalgebra of the Lie algebra of $G$.  Our main result shows that, for a generic $G$-Hitchin representation $\rho$, we have $J(\rho)\cap H=\emptyset$. As an application, we prove that generic orbifold Hitchin representations are strongly dense. This extends the result of Long, Reid, and Wolff for the Hitchin representations of surface groups. Our theorem also shows that the split real forms of many simple adjoint Lie groups contain strongly dense orbifold fundamental groups, partially generalizing the work of Breuillard, Guralnick, and Larsen.
\end{abstract}
\maketitle

\section{Introduction}
Let $G$ be a split real form of a complex simple adjoint Lie group and let $S$ be a closed orientable surface of genus at least 2. The $G$-Hitchin component, introduced by Hitchin \cite{hitchin92}, has been intensively studied as one of the prime examples of higher Teichm\"uller spaces. More generally, one can think of the $G$-Hitchin component for an orientable orbifold $O$ of negative Euler characteristic. For instance, Thurston \cite{Thurston80} considered the Teichm\"uller space for orbifolds and Choi and Goldman \cite{cgorb} explored the orbifold  $\PSL_3(\bR)$-Hitchin components as the deformation space of convex projective geometry. More recently, Alessandrini, Lee and Schaffhauser \cite{alessandrini} defined the orbifold $G$-Hitchin components in full generality. In this note, we explore two properties that generic $G$-Hitchin representations of orbifold groups have. 

Let $\overline{\Hit}_G(O)$ be the set of $G$-Hitchin representations and let $\Hit_G(O) := \overline{\Hit}_G(O)/G$. When $G=\PSL_n(\bR)$, we simply write $\overline{\Hit}_n(O) := \overline{\Hit}_{\PSL_n(\bR)}(O)$ and $\Hit_n(O) := \overline{\Hit}_{n}(O)/\PSL_n(\bR)$. We also denote by $\overline{\Hit}_G(S)$ and $\Hit_G(S)$ the space of $G$-Hitchin representations and the $G$-Hitchin component for a surface $S$.

The first generic property is about eigenvalues. For an element $[\rho]$ in the Fuchsian locus of $\Hit_{2k-1}(S)$, we have $\lambda_k(\rho(x))=0$ for all $x\in \pi_1(S)$, where $\lambda_k$ is the $\log$ of the absolute value of the $k$th largest eigenvalue. On the other hand, as we move away from the Fuchsian locus, we would expect to encounter Hitchin representations $\rho$ breaking the condition that $\lambda_k(\rho(x))=0$ for all $x\in \pi_1(S)$. Therefore, a descent question to ask is the following:
\begin{question}\label{qu:middleeigen}
Is there a Hitchin representation $[\rho]\in\Hit_{2k-1}(S)$ such that $\lambda_k (\rho (x))\ne 0$ for all $x\in \pi_1(S)\setminus \{1\}$?     
\end{question}
We will study Question~\ref{qu:middleeigen} in the orbifold setting. For orbifold Hitchin components, Question~\ref{qu:middleeigen} is a subtle problem even for $k=2$; as observed in  \cite[Proposition~1.1]{long2}, there is an infinite order element, say $x$, in some \emph{orbifold group} $\Delta$ such that $\lambda_2(\rho(x))=0$ for all $\rho\in \overline{\Hit}_3(\Delta)$. Yet, our main result gives an affirmative partial answer to Question~\ref{qu:middleeigen}. 

We will actually show more a generally statement under a restricted setting. Recall that any element $[\rho]$ in the Fuchsian locus of $\Hit_n(O)$ satisfies the relation 
\begin{equation}\label{eq:relation}
\lambda_i (\rho(x))+\lambda_{n-i+1}(\rho(x)) = 0
\end{equation}
for all $x\in \pi_1^{\mathrm{orb}}(O)$. Presumably, this equation does not hold once we move away from the Fuchsian locus. However, there is a group theoretic obstruction to break (\ref{eq:relation}). We say that an element $x\in \pi_1^{\mathrm{orb}}(O)$ is \emph{regular} if the normalizer of $x$ is the infinite cyclic generated by $x$ itself. If $x$ is not regular, $x$ and $x^{-1}$ are conjugate and this forces $\lambda_i (\rho(x))+\lambda_{n-i+1}(\rho(x)) = 0$ hold for any Hitchin representations $\rho$. We prove that, restricted to ones that survive under the abelianization, not being regular is the only obstruction to which $\lambda_i (\rho(x))+\lambda_{n-i+1}(\rho(x)) = 0$ fails. 

\begin{theorem}\label{thm:genericeigenvalue}
    Let $O$ be a closed orientable non-elementary 2-orbifold of negative Euler characteristic. Assume that $O$ is neither $\bS^2(2,2,2,2,p)$, $p\ge 2$ nor $\bS^2(2,2,p,q)$, $\frac{1}{p}+\frac{1}{q}< 2$. Then for any $i=1,2,\cdots, n$, the following set
    \[
    \{[\rho]\in \Hit_n(O)\,|\, \lambda_i (\rho(x))+\lambda_{n-i+1}(\rho(x)) \ne 0 \text{ for all  regular }x\notin \ker (\pi_1^{\mathrm{orb}}(O)\to \pi_1^{\mathrm{orb}}(O)^{\mathrm{ab}}) \}
    \]
    is generic in $\Hit_n(O)$.
\end{theorem}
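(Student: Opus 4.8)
The plan is to realize the set in question as a countable intersection of open dense subsets of $\Hit_n(O)$ (we take $n\ge 3$, the case $n=2$ being degenerate). Recall that $\Hit_n(O)$ is a connected real-analytic manifold and that Hitchin representations are Borel--Anosov, so for every infinite-order $x\in\pi_1^{\mathrm{orb}}(O)$ the matrix $\rho(x)$ has $n$ distinct positive real eigenvalues; hence each $\lambda_j(\rho(x))$, and therefore $f_x\colon[\rho]\mapsto\lambda_i(\rho(x))+\lambda_{n-i+1}(\rho(x))$, is real-analytic on $\Hit_n(O)$, and its zero locus $Z_x$ is either all of $\Hit_n(O)$ or a proper real-analytic subvariety (closed, nowhere dense, of measure zero). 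A regular element has infinite order (its normalizer is by definition an infinite cyclic group), and there are only countably many regular $x\notin\ker\bigl(\pi_1^{\mathrm{orb}}(O)\to\pi_1^{\mathrm{orb}}(O)^{\mathrm{ab}}\bigr)$; the displayed set is $\bigcap_x\bigl(\Hit_n(O)\setminus Z_x\bigr)$ over these. So it is enough to show: for each such $x$, assuming $O$ is not one of the excluded orbifolds, $f_x$ is not identically zero.

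Since $Z_x$ contains the Fuchsian locus by (\ref{eq:relation}), I would establish this by differentiating $f_x$ at a Fuchsian point $[\rho_0]$. Identifying $T_{[\rho_0]}\Hit_n(O)$ with $H^1\bigl(\pi_1^{\mathrm{orb}}(O);\mathfrak{sl}_n(\bR)\bigr)$ (with the $\Ad\rho_0$-action), the first-order variation of a simple eigenvalue gives that the derivative of $f_x$ along the direction of a cocycle $u$ equals
\[
\left.\frac{d}{dt}\right|_{t=0}f_x(\rho_t)=u(x)_{ii}+u(x)_{n-i+1,n-i+1},
\]
the entries computed in the eigenbasis of $\rho_0(x)$. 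As $\rho_0(x)$ is regular semisimple, this depends only on the image of $[u]$ under the restriction $r_x\colon H^1(\pi_1^{\mathrm{orb}}(O);\mathfrak{sl}_n(\bR))\to H^1(\langle x\rangle;\mathfrak{sl}_n(\bR))\cong\mathfrak{h}$, the Cartan subalgebra of diagonal matrices in that eigenbasis, followed by the functional $\ell\colon\diag(v_1,\dots,v_n)\mapsto v_i+v_{n-i+1}$. Thus the goal reduces to showing that $\ell$ is nonzero on $\operatorname{Im}(r_x)$.

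For this I would bring in the principal $\mathfrak{sl}_2$: $\mathfrak{sl}_n(\bR)=\bigoplus_{d=2}^n V_{2d-2}$ into irreducible $(2d-1)$-dimensional summands, so $\mathfrak{h}=\bigoplus_{d=2}^n\bR H_d$ with $H_d$ the zero-weight line of $V_{2d-2}$, and $H^1(\pi_1^{\mathrm{orb}}(O);\mathfrak{sl}_n(\bR))=\bigoplus_{d=2}^n H^1(\pi_1^{\mathrm{orb}}(O);V_{2d-2})$ with the $d=2$ term the tangent space to Teichm\"uller space. The longest Weyl element acts on $H_d$ by $(-1)^{d-1}$ and reverses diagonal entries, so $\ell(H_d)=(1+(-1)^{d-1})(H_d)_i$; in particular $\ell$ vanishes on $H_d$ for every even $d$, including the whole Teichm\"uller line $\bR H_2$ — consistent with the fact that Teichm\"uller deformations stay inside the Fuchsian locus. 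Since $\ell\not\equiv 0$ on $\mathfrak{h}$, there is an odd $d\ge 3$ with $\ell(H_d)\ne 0$, and it suffices to produce, for one such $d$, a cocycle $u$ valued in $V_{2d-2}$ whose value $u(x)$ has nonzero $H_d$-component. Here the hypotheses enter twice. First, $H^1(\pi_1^{\mathrm{orb}}(O);V_{2d-2})$ must be nonzero for a suitable odd $d\ge 3$: this is an Eichler--Shimura/Riemann--Roch computation in $\chi^{\mathrm{orb}}(O)$ and the cone orders, and it is exactly at this point that $\bS^2(2,2,2,2,p)$ and $\bS^2(2,2,p,q)$ must be excluded. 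Second, the evaluation $[u]\mapsto u(x)$ must land away from $\ker\ell$; this is precisely what $x\notin\ker(\pi_1^{\mathrm{orb}}(O)\to\pi_1^{\mathrm{orb}}(O)^{\mathrm{ab}})$ is for — indeed, were $x$ conjugate to $x^{-1}$ (which happens when $x$ is not regular), the same computation forces $\operatorname{Im}(r_x)\subseteq\{v_j+v_{n-j+1}=0\ \forall j\}$, on which $\ell$ vanishes identically.

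The main obstacle is this last, surjectivity-type statement: that for every regular $x$ surviving the abelianization one can steer $r_x$ into $\mathfrak{h}$ so as to avoid the hyperplane $\ker\ell$. I would handle it by an explicit deformation — for instance a bending (twist) along a suitable simple closed curve of $O$, or, when $O$ has no such curve detecting $x$, along an appropriate finite cyclic cover dual to a homology class not killing $x$ — and then read off the diagonal part of the associated cocycle's value at $x$. The delicate points are the behaviour of such deformations at the cone points and the case where the class of $x$ is torsion in $H_1(O;\mathbb{Z})$; these are exactly the sources of the exceptional orbifolds $\bS^2(2,2,2,2,p)$ and $\bS^2(2,2,p,q)$, in line with the surface-group theorem of Long--Reid--Wolff and the phenomenon of \cite[Proposition~1.1]{long2}. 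Once such a cocycle $u$ is obtained, $f_x$ has nonzero differential at $[\rho_0]$ and so is not identically zero, which by the first paragraph completes the proof.
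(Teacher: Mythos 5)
Your opening reduction---countably many regular $x$ outside the kernel of abelianization, analyticity of $[\rho]\mapsto\lambda_i(\rho(x))+\lambda_{n-i+1}(\rho(x))$ on $\Hit_n(O)$, hence it suffices to show this function is not identically zero for each such $x$---is exactly how the paper proceeds (Lemma~\ref{lem:generic}, together with deducing Theorem~\ref{thm:genericeigenvalue} from Theorem~\ref{thm:genericjordan}(i) applied to $\alpha(\diag(a_1,\dots,a_n))=a_i+a_{n-i+1}$ and $c=0$). The genuine gap is the step you yourself flag as ``the main obstacle'': producing a deformation at a Fuchsian point along which the derivative is nonzero. Saying ``bend along a suitable simple closed curve $y$ and read off the diagonal part of the cocycle at $x$'' is not yet a proof, because that diagonal part is a \emph{signed} sum over the intersection points $p\in x\sharp y$ of terms of the form $\Ad_{R(\varphi_p([\rho]))}\beta^\vee$ depending on the intersection angles (this is the content of Lemma~\ref{lem:generalderivative} combined with Lemma~\ref{lem:computesummand}), and at an arbitrary Fuchsian point this sum can cancel even when $\ri(x,y)\neq 0$. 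The paper's resolution of precisely this cancellation is what your sketch omits: first flow a long time along the twist flow inside the Fuchsian locus so that, by Kerckhoff's monotonicity of angles (Claim~\ref{claim:angle}), all $\varphi_p$ are close to $0$; then every summand is close to $(\sign p)\,\kf(\alpha_+^\vee,\alpha_+^\vee)$ up to small error, and the total is bounded below by roughly $\ri(x,y)\,\kf(\alpha_+^\vee,\alpha_+^\vee)>0$. This is also exactly where the hypotheses on $x$ and on $O$ are consumed: regularity of $x$ and the exclusion of $\bS^2(2,2,2,2,p)$ and $\bS^2(2,2,p,q)$ enter through Lemma~\ref{lem:orbifold}, which provides a regular simple closed curve $y\subset O_{\mathrm{reg}}$ meeting $x$ essentially, while ``$x$ survives abelianization'' is what yields $\ri(x,y)\neq 0$; moreover, since Goldman's product formula is a surface statement, its orbifold version requires lifting Goldman flows to a finite cover (Theorem~\ref{prop:lifting}), which your passing mention of a cyclic cover does not supply.

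A secondary inaccuracy: you locate the role of the excluded orbifolds in an Eichler--Shimura nonvanishing of $H^1(\pi_1^{\mathrm{orb}}(O);V_{2d-2})$ for a suitable odd $d$. That is not where they enter; those Hitchin components are perfectly healthy, and the exclusions are needed because in those orbifolds one cannot run the curve-theoretic part of the argument (Lemma~\ref{lem:orbifold}), i.e.\ one may fail to find regular simple closed curves in $O_{\mathrm{reg}}$ interacting correctly with $x$. Your cohomological bookkeeping (principal $\mathfrak{sl}_2$ decomposition, the functional $\ell$ vanishing on the even summands, the first-variation-of-eigenvalues formula, $H^1(\langle x\rangle;\mathfrak{sl}_n(\bR))\cong\mathfrak{h}$) is fine and consistent with the paper's splitting $\alpha=\alpha_++\alpha_-$, but as written the proposal establishes only the easy Baire-category reduction and defers the entire quantitative heart of the proof---the angle estimate and the intersection-number lower bound---which is where the actual work lies.
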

Here, we say that a subset of an analytic manifold $M$ is $\emph{generic}$ if it is the complement of the union of countably many proper analytic subvarieties. Also $\bS^2(p_1,\cdots, p_k)$ is an orbifold whose underlying space is the 2-sphere $\bS^2$ with cone points of order $p_1, \cdots, p_k$.

The second generic property that we want to explore is closely related to the notion of strongly dense subgroup introduced by Breuillard, Green,  Guralnick, and Tao \cite{tao}. We first recall some definitions. We say that a subgroup $H$ of $G$ is \emph{strongly dense} if for any pair of non-commuting elements $x,y\in H$, the subgroup of $G$ generated by $x$ and $y$ is Zariski dense. Analogues to the Tits alternative, every finitely generated subgroup of a strongly dense subgroup of $G$ is either very simple (i.e., abelian) or very complicated (i.e., Zariski dense in $G$). Motivated from this, we say that a representation $\rho:\Gamma\to G$ of a finitely generated group $\Gamma$ into $G$ is \emph{strongly dense} if $\rho(x)$ and $\rho(y)$ generate a Zariski dense subgroup of $G$ for any pair of non-commuting elements $x,y\in \Gamma$.

One may observe that every Fuchsian representation of a surface group into $\PSL_2(\bR)$ is strongly dense. For a higher rank $G$, on the other hand, 
we can produce many Zariski dense $G$-Hitchin representations which are not strongly dense by amalgamating a Zariski dense Hitchin representation and a Fuchsian representation along a separating simple closed curve on a surface. Despite this, it is reasonable to believe that generic $G$-Hitchin representations are strongly dense. Indeed, when $G=\PSL_n(\bR)$, Long, Reid, and Wolff \cite{long} recently proved that the set of strongly dense $\PSL_n(\bR)$-Hitchin representations of a surface $S$ is generic in $\overline{\Hit}_n(S)$. We prove a similar result for Hitchin representations of orbifold groups.

\begin{theorem}\label{thm:main2}
     Let $O$ be a closed orientable non-elementary orbifold of negative Euler characteristic. Assume that $O$ has at most one even order cone point. Let $G$ be a split real form of a complex simple adjoint Lie group such that the Weyl group of $G$ contains $-1$. Then the set of strongly dense $G$-Hitchin representations is generic in $\overline{\Hit}_G(O)$. 
\end{theorem}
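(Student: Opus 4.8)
\emph{Proof strategy.} The plan is to realize the set of strongly dense Hitchin representations as the complement of a countable union of proper analytic subvarieties of $\overline{\Hit}_G(O)$. For a pair $x,y\in\pi_1^{\mathrm{orb}}(O)$ with $xy\ne yx$, set $V_{x,y}:=\{\rho:\overline{\langle\rho(x),\rho(y)\rangle}\ne G\}$ (Zariski closure); since being strongly dense means avoiding $V_{x,y}$ for all such pairs, and there are only countably many pairs, it suffices to show that each $V_{x,y}$ is a proper analytic subvariety. First I would set aside the pairs for which $\Gamma':=\langle x,y\rangle$ is elementary (virtually cyclic): in the orbifold setting these come from the torsion, and controlling them is where the hypothesis that $O$ has at most one even order cone point enters — this mirrors both the exclusion of $\bS^2(2,2,2,2,p)$ and $\bS^2(2,2,p,q)$ in Theorem~\ref{thm:genericeigenvalue} and the group-theoretic obstruction observed in \cite{long2}. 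So assume $\Gamma'$ is non-elementary. Then $\rho|_{\Gamma'}$ is Anosov with respect to the minimal parabolic $B$ (a Hitchin representation is Anosov, and restriction to a geometrically finite subgroup of $\pi_1^{\mathrm{orb}}(O)$ preserves this), hence carries an injective equivariant limit map $\partial\Gamma'\to G/B$; non-elementarity of $\langle x,y\rangle$ then forces $\rho(x)$ and $\rho(y)$ not to lie in a common proper parabolic of $G$, so (Borel--Tits, after passing to a finite-index subgroup of $\Gamma'$ to reach the identity component) the Zariski closure $H$ of $\langle\rho(x),\rho(y)\rangle$ is, up to finite index, reductive and $G$-irreducible.

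Assuming now $\rho\in V_{x,y}$, the reductive group $H$ is proper and is contained in a maximal connected reductive subgroup $M$ of $G$; by Dynkin's classification there are finitely many conjugacy classes of such $M$, and I split into two cases. \emph{Case A: the principal $\mathrm{SL}_2$ of $G$ is not conjugate into $M$.} Each such $M$ stabilizes a proper subspace of a suitable rational $G$-module $V_M$ (e.g.\ the standard or the adjoint representation), so the condition that $\rho(x)$ and $\rho(y)$ share a common subspace of $V_M$ of the corresponding dimension and type is Zariski-closed (the relevant Grassmannian being complete). It is moreover \emph{proper}: at a Fuchsian representation $\rho_0$ the group $\overline{\langle\rho_0(x),\rho_0(y)\rangle}$ is exactly the principal $\mathrm{SL}_2$ — which by assumption is not conjugate into $M$ — and one checks, using the exponents of $G$, that the principal $\mathrm{SL}_2$ has no invariant subspace of $V_M$ of the offending dimension, so $\rho_0$ is not in this set. \emph{Case B: the principal $\mathrm{SL}_2$ of $G$ is conjugate into $M$.} A centralizer-dimension (Jacobson--Morozov) argument then shows $M$ cannot have full $\bR$-rank: a full-rank proper reductive subgroup is a subsystem subgroup, and $\operatorname{ad}$ of any of its nilpotents has nonzero kernel on the nonzero orthogonal complement of its Lie algebra, so it contains no regular nilpotent of $G$. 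Hence the Jordan projection $\lambda(M)$ lies in a finite union of $W$-translates of a proper linear subspace of $\mathfrak{a}$, i.e.\ in a finite union of hyperplanes; collecting these over the finitely many classes of $M$ occurring in Case B produces hyperplanes $H_1,\dots,H_r$, and picking any infinite-order $w\in\Gamma'$ (one exists, $\Gamma'$ being non-elementary) we get, since a power of $\rho(w)$ lies in $M$ and the $H_j$ are linear, $\lambda(\rho(w))\in\lambda(M)\subseteq H_1\cup\dots\cup H_r$ with $\lambda(\rho(w))\in J(\rho)$.

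Putting this together: every $\rho\in V_{x,y}$ with $\Gamma'$ non-elementary either lies in one of the finitely many Case-A subvarieties, or satisfies $J(\rho)\cap(H_1\cup\dots\cup H_r)\ne\emptyset$, i.e.\ $\lambda(\rho(\gamma))\in H_j$ for some $\gamma$ and some $j$. By our main result, for each fixed $j$ the set $\{\rho:J(\rho)\cap H_j=\emptyset\}$ is generic, so $\{\rho:J(\rho)\cap H_j\ne\emptyset\}=\bigcup_\gamma\{\lambda(\rho(\gamma))\in H_j\}$ is a countable union of \emph{proper} analytic subvarieties. Thus $V_{x,y}$ is contained in a countable union of proper analytic subvarieties; being itself Zariski-closed, it is therefore proper. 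Taking the union over all non-commuting pairs $(x,y)$ and complementing shows the strongly dense locus is generic.

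The principal obstacle is twofold. The harder part is the structural work behind the middle paragraph: verifying, for each $G$ with $-1\in W$, that Cases A and B exhaust the maximal reductive subgroups, that the Case-A subgroups are indeed subspace stabilizers in small enough modules, and above all that at the Fuchsian locus the principal $\mathrm{SL}_2$ has no invariant subspace of the dimension and type needed to place $\rho_0$ in a Case-A subvariety — this last point is exactly what makes those subvarieties proper, and it is where the representation theory of the principal $\mathrm{SL}_2$ and the hypothesis $-1\in W$ (which also licenses the eigenvalue-avoidance theorem and, via $\lambda(g^{-1})=\lambda(g)$, streamlines the bookkeeping) are essential. The second, more delicate-but-routine, obstacle is the orbifold torsion: pinning down exactly which non-commuting pairs generate elementary subgroups and why the hypothesis on even order cone points suffices to control the rest — the orbifold analogue of the constraints already visible in Theorem~\ref{thm:genericeigenvalue}.
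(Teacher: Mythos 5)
Your overall reduction (show each $V_{x,y}=\operatorname{Bad}(x,y)$ is a proper analytic subvariety and intersect over the countably many non-commuting pairs) matches the paper, but the core of your argument has a genuine gap exactly where the hypothesis on even-order cone points does its work. In Case B you pick ``any infinite-order $w\in\Gamma'$'' and invoke the hyperplane-avoidance result to claim each set $\{\rho:\lambda(\rho(w))\in H_j\}$ is proper. But Theorem~\ref{thm:genericjordan}(ii) and Lemma~\ref{lem:generic} only give properness/genericity for \emph{regular} elements (normalizer infinite cyclic generated by the element): the non-emptiness of $U_{w,\alpha,0}$ is established via Lemma~\ref{lem:orbifold}(iii), the lifting theorem and the Goldman-flow derivative computation, all of which require $w$ regular. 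For non-regular $w$ (a product of two involutions) the corresponding locus can fail to be proper -- this is precisely the obstruction highlighted in the introduction -- and then your ``covered by countably many proper subvarieties'' argument collapses. The hypothesis that $O$ has at most one even-order cone point is used in the paper exactly here: it guarantees that $\langle x,y\rangle$ contains a \emph{regular} element lying in the torsion-free finite-index subgroup, and this is needed for \emph{all} non-commuting pairs, not only to dispose of the elementary ones. Relatedly, elementary non-commuting pairs cannot simply be ``set aside'': for such a pair $V_{x,y}$ is the whole of $\overline{\Hit}_G(O)$, so the hypothesis must exclude them outright or the statement itself fails; deferring this as routine bookkeeping misplaces the role of the hypothesis.

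The structural middle of your proposal is also both unfinished (by your own admission) and unnecessary. The paper does not classify maximal reductive subgroups, does not discuss whether the principal $\mathrm{SL}_2$ embeds in them, and does not need reductivity of the Zariski closure via Anosov limit maps. Instead it argues by contradiction: assuming $\operatorname{Bad}(x,y)=\overline{\Hit}_G(O)$, it restricts to a torsion-free finite-index subgroup $\Gamma\le\langle x,y\rangle$, uses Fock--Goncharov positivity of $\rho|_\Gamma$ as an open condition near the Fuchsian locus \cite{fock}, passes to the semisimplification of the Zariski closure (using $\lambda(\rho^{ss}_\Gamma(z))=\lambda(\rho(z))$ from \cite{gueritaud}, which sidesteps reductivity), and invokes Sambarino's Theorem~B \cite{sambarino}: only finitely many conjugacy classes of semisimple subalgebras, all of strictly smaller real rank, can occur. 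This immediately produces finitely many functionals $\alpha_1,\dots,\alpha_m$ with $\alpha_i(\lambda(\rho(z)))=0$ on an open set of representations, contradicting the genericity of $\bigcap_i U_{z,\alpha_i,0}$ at a regular $z\in\Gamma\cap\langle x,y\rangle$. In particular your Case~A -- Dynkin's classification, re-encoding ``contained in a conjugate of $M$'' as a Zariski-closed condition via invariant subspaces (which does not work for maximal reductive subgroups that are not subspace stabilizers, e.g.\ irreducible $S$-subgroups), and the unverified claim that the principal $\mathrm{SL}_2$ admits no invariant subspace of the offending type -- is exactly the ``principal obstacle'' you flag, and it is avoided entirely by the paper's use of positivity plus \cite{sambarino}; the analyticity of $\operatorname{Bad}(x,y)$ is taken from \cite{tao}.
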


The restriction on underlying orbifolds is essential; if $O$ has two even order cone points, one can find a pair of non-commuting elements $s_1$ and $s_2$ in $\pi_1^{\mathrm{orb}}(O)$ whose orders are even, say, $2n$ and $2m$ respectively. Then $s_1^{n}$ and $s_2^m$ are order two non-commuting elements in $\pi_1^{\mathrm{orb}}(O)$. However, $\langle \rho(s_1 ^n), \rho(s_2^m)\rangle $ is always isomorphic to $\mathbb{Z}\rtimes \mathbb{Z}_2$ for any Hitchin representation $\rho$, which cannot be Zariski dense in $G$. 

Theorem~\ref{thm:main2} shows that split real forms of types $B_n$, $C_n$, $D_{2n}$, $E_7$, $E_8$, $F_4$ and $G_2$ contain plenty of strongly dense discrete orbifold subgroups of various Euler characteristics. This can be seen as a partial generalization of \cite[Theorem~4.5]{tao} and \cite[Corollary~8.3]{breuillard}, where they constructed strongly dense non-abelian free subgroups and closed surface subgroups in the complexification  $G^\mathbb{C}$ respectively. We remark that our result cannot be derived from \cite[Theorem~8.1 and Theorem~8.5]{breuillard} because the representation variety $\operatorname{Hom}(\pi_1^{\mathrm{orb}}(O),G^\mathbb{C})$ is not irreducible in general. 

Our theorem does not cover Lie groups of types $A_n$, $E_6$, and $D_{2n+1}$. However we still believe that these Lie groups also contain strongly dense orbifold fundamental groups.

Proofs of Theorem~\ref{thm:genericeigenvalue} and Theorem~\ref{thm:main2} are based on the following Theorem~\ref{thm:genericjordan}.  

\begin{theorem}\label{thm:genericjordan}
    Let $O$ be a closed orientable non-elementary orbifold of negative Euler characteristic other than $\bS^2(2,2,2,2,p)$, $p\ge 2$ or $\bS^2(2,2,p,q)$, $\frac{1}{p}+\frac{1}{q}< 2$. Let $G$ be the split real form of a complex simple adjoint Lie group. Let $\lambda:G\to \overline{\mathfrak{a}^+}$ be the Jordan projection. 
    
    \begin{itemize}
    \item[(i)]  For an arbitrary $\alpha\in \mathfrak{a}^*\setminus\{0\}$ and $c\in \bR$, the following set
        \[
    \{[\rho]\in \Hit_G(O)\,|\, \alpha(\lambda(\rho(x))) \ne c \text{ for all regular  }x\notin \ker (\pi_1 ^{\mathrm{orb}}(O)\to \pi_1^{\mathrm{orb}}(O)^{\mathrm{ab}}) \}
    \]    
    is generic in $\Hit_G(O)$.
    \item[(ii)] Assume furthermore that $\alpha$ satisfies $\alpha=-w_0\alpha$ for the longest element $w_0$ of the Weyl group of $G$. Then 
            \[
    \{[\rho]\in \Hit_G(O)\,|\, \alpha( \lambda (\rho(x)) ) \ne c \text{ for all regular }x\in \pi_1 ^{\mathrm{orb}}(O)\setminus\{1\} \}
    \]
    is generic in $\Hit_G(O)$.
    \end{itemize}
\end{theorem}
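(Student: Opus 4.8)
The plan is to reduce everything to a single genericity statement about analytic functions on the Hitchin component and then verify, for each relevant group element, that the corresponding function is not constant along the Hitchin component. Fix $\alpha\in\mathfrak a^*\setminus\{0\}$ and $c\in\bR$. For a fixed $x\in\pi_1^{\mathrm{orb}}(O)$, the assignment $[\rho]\mapsto \alpha(\lambda(\rho(x)))$ is real-analytic on $\Hit_G(O)$ (the Jordan projection of a loxodromic element depends analytically on the matrix entries, and Hitchin representations send every infinite-order element to a loxodromic one—this uses the Anosov property / positivity of Hitchin representations). Hence each set $Z_x:=\{[\rho]:\alpha(\lambda(\rho(x)))=c\}$ is an analytic subvariety, and it is \emph{proper} precisely when the function $[\rho]\mapsto\alpha(\lambda(\rho(x)))$ is not identically equal to $c$. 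Since $\pi_1^{\mathrm{orb}}(O)$ is countable, the complement of $\bigcup_x Z_x$ (union over the relevant $x$) is generic as soon as every such $Z_x$ is proper. So the whole theorem comes down to: for each regular $x$ in the stated class, produce \emph{one} Hitchin representation with $\alpha(\lambda(\rho(x)))\ne c$.

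For part (i), the natural strategy is to exploit the real-analyticity together with a deformation argument. Suppose for contradiction that $\alpha(\lambda(\rho(x)))\equiv c$ on all of $\Hit_G(O)$ for some regular $x\notin\ker(\pi_1^{\mathrm{orb}}(O)\to\pi_1^{\mathrm{orb}}(O)^{\mathrm{ab}})$. The key point is that the Hitchin component is large enough—and bending/twist deformations along $x$ (or along curves meeting $x$) are available—to move the Jordan projection $\lambda(\rho(x))$ in enough directions of $\overline{\mathfrak a^+}$ to escape the hyperplane $\{\alpha=c\}$. Concretely I would: (a) pass to a finite-index torsion-free subsurface cover $\Sigma\to O$ so that $x$ lifts to an element of a surface group and the restriction map $\Hit_G(O)\hookrightarrow\Hit_G(\Sigma)$ is well understood; (b) on $\Hit_G(\Sigma)$ use the Goldman-type symplectic/twist flows (generalized bending along a simple closed curve) to compute the derivative of $\lambda(\rho(x))$, showing that the differential of $[\rho]\mapsto\lambda(\rho(x))$ is not annihilated by $\alpha$; the regularity of $x$ and the hypothesis $x\notin\ker(\mathrm{ab})$ are exactly what guarantee that $x$ is not forced onto a proper subvariety (e.g., it rules out the situation where $x$ is conjugate to $x^{-1}$, which would pin $\lambda(\rho(x))$ to the wall $\{\beta+w_0\beta=0\}$). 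The excluded orbifolds $\bS^2(2,2,2,2,p)$ and $\bS^2(2,2,p,q)$ are precisely the ones where this deformation room fails (their Hitchin components are too constrained, or every surviving element is non-regular), which is why they must be removed.

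For part (ii), the extra hypothesis $\alpha=-w_0\alpha$ means $\alpha(\lambda(g))$ is unchanged under $g\mapsto g^{-1}$ (since $\lambda(g^{-1})=-w_0\lambda(g)$), so the function $[\rho]\mapsto\alpha(\lambda(\rho(x)))$ descends to conjugacy classes of \emph{unoriented} elements; this removes the obstruction caused by non-regular $x$ (where $x\sim x^{-1}$), so one may drop the $\ker(\mathrm{ab})$ restriction and only require $x\ne 1$. I would deduce (ii) from (i) by checking that for a regular $x$ lying in $\ker(\pi_1^{\mathrm{orb}}(O)\to\pi_1^{\mathrm{orb}}(O)^{\mathrm{ab}})$ one can still realize $\alpha(\lambda(\rho(x)))\ne c$: either $x$ is conjugate to $x^{-1}$ and then some power or related element handles it, or one argues directly via a bending deformation supported away from the abelianization constraint. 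The main obstacle I anticipate is step (b)—establishing that the twist/bending derivative of the Jordan projection is genuinely nonzero in the $\alpha$-direction for \emph{every} admissible $x$ simultaneously, rather than for a generic one. This requires a careful choice of a curve or pants decomposition adapted to $x$ together with an eigenvalue-variation formula (à la the computations of Goldman and of Bridgeman–Canary–Labourie–Sambarino for Anosov representations), and handling the orbifold cone points and the few exceptional small orbifolds is where the bulk of the technical work will lie.
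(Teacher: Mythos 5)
Your opening reduction (real-analyticity of $\lambda$ on purely loxodromic elements, hence each $Z_x$ is an analytic subvariety, properness of $Z_x$ as the only issue, countable union over $x$) is exactly the paper's first step, and your identification of Goldman twist/bending flows as the deformation tool is on target. But the step you yourself flag as the ``main obstacle'' --- showing the deformation actually moves $\alpha(\lambda(\rho(x)))$ off the value $c$ for \emph{every} admissible $x$ --- is the entire content of the proof, and your sketch does not contain the mechanism that makes it work. The paper does not attempt to show that the differential of $[\rho]\mapsto\lambda(\rho(x))$ is non-degenerate at a given point; at an arbitrary Fuchsian representation the Goldman-product sum $\sum_{p\in x\sharp y}(\sign p)\,\kf(\widehat{f}(\rho(x_p)),\widehat{g}(\rho(y_p)))$ can vanish by cancellation between intersection points. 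Instead the argument is: (1) compute the Goldman function of $f=\alpha\circ\lambda$ explicitly ($\widehat{f}=\alpha^\vee$ on $\exp\mathfrak{a}^+$, and $w_0\alpha^\vee$ at inverses); (2) first run the classical twist flow along a regular simple closed curve $y$ meeting $x$ essentially for a long time, using Kerckhoff's monotonicity of angles plus the Dehn-twist limit to make every intersection angle as small as desired, so each summand is close to $\pm\kf(\alpha^\vee,\cdot)$ with controlled error; (3) then, for (ii), the hypothesis $\alpha=-w_0\alpha$ makes the negatively signed intersections contribute with the \emph{same} positive sign, so the derivative of $f_x$ along the Goldman flow of $\alpha\circ\lambda$ along $y$ is strictly positive; for (i) one decomposes $\alpha=\alpha_++\alpha_-$, flows with $\alpha_+\circ\lambda$, and uses that $x\notin\ker(\pi_1^{\mathrm{orb}}(O)\to\pi_1^{\mathrm{orb}}(O)^{\mathrm{ab}})$ provides a simple closed $y$ with $\ri(x,y)>0$, so the dominant term $\ri(x,y)\,\kf(\alpha_+^\vee,\alpha_+^\vee)$ beats the small-angle errors. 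In particular the hypotheses play different roles than you assign them: $x\notin\ker(\mathrm{ab})$ is used to obtain a nonzero \emph{algebraic} intersection number (preventing sign cancellation), not to rule out $x\sim x^{-1}$, and regularity of $x$ is what feeds the orbifold lemma producing a regular simple closed curve meeting $x$ essentially.

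Two further gaps. First, the Goldman product formula is a surface statement; to apply it on an orbifold the paper proves a flow-lifting theorem (the Goldman flow along a simple closed curve in $O$ lifts under a finite normal cover to a multi-curve Goldman flow) and only then obtains the variation formula on $\Hit_G(O)$. Your step (a) of passing to a finite cover gestures at this, but the compatibility of the flows with the covering is a substantive argument, not bookkeeping. Second, your plan to deduce (ii) from (i) runs in the wrong direction: (i) says nothing about regular $x$ lying in the kernel of the abelianization, and your suggested remedies (``some power or related element handles it'') are not arguments; in the paper (ii) is proved first --- its symmetry hypothesis is precisely what removes the need for $\ri(x,y)\ne 0$ --- and (ii) is then invoked inside the proof of (i) to handle the degenerate case $\alpha_+=0$. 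A minor point: the orbifolds $\bS^2(2,2,2,2,p)$ and $\bS^2(2,2,p,q)$ are excluded because of the topological lemma guaranteeing regular simple closed geodesics in $O_{\mathrm{reg}}$ intersecting $x$ essentially, not because their Hitchin components are ``too constrained.''
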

For the definition of the Jordan projection, consult Section~\ref{sec:background}. We also remark that the condition in (ii) is always satisfied if $G$ is of type $B_n$, $C_n$, $D_{2n}$, $E_7$, $E_8$, $F_4$, or $G_2$ as for these cases the longest element of the Weyl group of $G$ acts as $-1$ on $\mathfrak{a}$. 

Once we have Theorem~\ref{thm:genericjordan}, Theorem~\ref{thm:genericeigenvalue} immediately follows since, as illustrated in Example~\ref{ex:psl}, the Jordan projection $\lambda(\rho(x))$ of a $\PSL_n(\bR)$-Hitchin representation $\rho$ is more or less taking the log of the diagonal matrix that is conjugate to $\rho(x)$. 

The milestone of this paper is to use the Goldman product formula as a variation formula. This new perspective, combined with the flow lifting theorem (Theorem~\ref{prop:lifting}), paves the way to our main results.

\subsection*{Organization} Section~\ref{sec:background} provides a quick review on Hitchin representations and relevant Lie theory.  In Section~\ref{sec:goldman}, we introduce Goldman functions and state the Goldman product formula, which will be the key player of our proof. In Section~\ref{sec:lifting}, we will show that, under a finite covering, a Goldman flow along an oriented simple closed curve lifts to a Goldman flow along a multi-curves. This allows us to apply the Goldman product formula to more general situations. Then proofs of Theorem~\ref{thm:genericjordan} and Theorem~\ref{thm:main2} will be given in Section~\ref{sec:proof}. Finally, in Section~\ref{sec:discussion}, we discuss related problems focusing on $\Hit_3(S)$.

\subsection*{Acknowledgment} The author learned Question~\ref{qu:middleeigen} from Michael Zshornack during the CIRM introductory school to the IHP program ``Group Actions and Rigidity: Around the Zimmer Program" in April 2024. 
The author thanks the organizers and CIRM for hosting the wonderful conference. Also the author is grateful to  Michael Zshornack, Gye-Seon Lee and Yves Benoist for helpful discussions and suggestions. 

The author was supported by the BK21 SNU Mathematical Sciences Division. 

\section{Background}\label{sec:background}
In this section, we quickly review orbifolds, Hitchin representations and related Lie theory.  Standard references for these subjects are \cite{knapp, eberlein,Cbook,porti}. \cite{Benoist16} also discusses Jordan projections in detail including many useful properties and applications.

\subsection{Orbifolds}
An $n$\emph{-orbifold} $O$ is a Hausdorff topological space $|O|$ together with a collection of  quadruples $\{(\widetilde{U}_i,\Gamma_i, \phi_i,V_i)\}$ where 
\begin{itemize}
    \item $\widetilde{U}_i$ is a simply-connected domain in $\bR^n$;
    \item $C:=\{V_i\}$ is an open covering of $O$ closed under the intersection. Namely, if $V_1,V_2\in C$ then $V_1\cap V_2\in C$;
    \item $\Gamma_i$ is a group acting on $\widetilde{U}_i$ smoothly and effectively;
    \item $\phi_i: \widetilde{U}_i/\Gamma_i \to V_i$ is a homeomorphism.
\end{itemize}
These charts need to be compatible to one another in the following sense. Suppose that two charts $(\widetilde{U}_1,\Gamma_1, \phi_1,V_1)$ and $(\widetilde{U}_2, \Gamma_2, \phi_2, V_2)$ overlap. Then there is a chart $(\widetilde{U}_{12},\Gamma_{12}, \phi_{12}, V_{12})$ such that $V_{12} = V_1\cap V_2$. We request that there is an injective homomorphism $\rho_i:\Gamma_{12} \to \Gamma_i$ and a transition diffeomorphism $f_i : \widetilde{U}_{12} \to \widetilde{U}_i$ such that the following diagram is commutative for each $i=1,2$:
\[
\xymatrix{
\widetilde{U}_{12} \ar[d]_{f_i}\ar[rr] & & \widetilde{U}_{12}/\Gamma_{12} \ar[r]^{\phi_{12}}& V_{12}\ar[d]^{\text{inclusion}}\\
\widetilde{U}_i \ar[r] &\widetilde{U}_i/\rho_i(\Gamma_{12}) \ar[r] &\widetilde{U}_i/\Gamma_i\ar[r]_{\phi_i}& V_i
}.
\]

We say that an orbifold $O$ is \emph{orientable} if the group actions associated to orbifold charts and all transition maps are orientation preserving; $O$ is \emph{closed} if $O$ does not have the orbifold boundary and the underlying space $|O|$ is compact.

For each point $x$ of a 2-orbifold $O$, one can find a local chart of the form $\mathbb{R}^2/G_x$ where $G_x$ is a finite group of isometries on $\mathbb{R}^2$. In this case, $G_x$ is called the \emph{local group} of $x$.  We say $x\in O$ is \emph{regular} if $G_x=\{1\}$ and \emph{singular} otherwise. The set of singular and regular points of $O$ will be denoted by $O_{\mathrm{sing}}$ and $O_{\mathrm{reg}}$ respectively.

In this paper, we will only focus on closed orientable 2-orbifolds of negative Euler characteristics. Such an orbifold $O$ has the underlying space $|O|$ a closed orientable surface and there are finitely many cone points in $|O|$ of order $p_1,\cdots, p_k$. Since the Euler characteristic of $O$ is negative, the integers $p_1, \cdots, p_k$  satisfy
\[
\chi(|O|) - \sum_{i=1} ^k \left( 1-\frac{1}{p_i}\right)<0.
\]
Such an orbifold will be denoted by $|O|(p_1,\cdots,p_k)$. All closed orientable 2-orbifolds of negative Euler characteristics are hyperbolic; they can be realized as a quotient orbifold $\mathbb{H}^2/\Gamma$, where $\Gamma$ is a discrete subgroup of $\operatorname{Isom}^+(\mathbb{H}^2)$ possibly with torsion.

\emph{Elementary orbifolds} are ones that cannot be decomposed further along a 1-suborbifold into orbifolds of negative Euler characteristics. The regular set of any non-elementary closed orientable orbifold of negative Euler characteristic contains an essential simple closed curve.

Every orbifold $O$ can be covered by its universal orbifold $\widetilde{O}$. The group of deck transformation groups of the universal covering $\widetilde{O}\to O$ is the \emph{orbifold fundamental group} $\pi_1 ^{\mathrm{orb}}(O)$.

In what follows, an \emph{oriented closed curve} in a 2-orbifold $O$ is a smooth immersion $\gamma:\bS^1=\bR/\mathbb{Z}\to |O|$ such that $|\gamma^{-1}(O_{\mathrm{sing}})|$ is finite and each point $t\in\gamma^{-1}(O_{\mathrm{sing}})$ is given a prescribed local lift of $\gamma|[t-\epsilon,t+\epsilon]$ to the universal cover of $O$. We denote by $|\gamma|$ the image of an oriented closed curve $\gamma$.

Let $\gamma$ be an oriented curve in $O$. Let $p\in |\gamma|$. If $p\in O_{\mathrm{reg}}$, choose an orbifold chart $U$ around $p$ with the trivial local group and a subinterval $[a,b]\subset \bR/\mathbb{Z}$ such that $\gamma([a,b])\subset U$. Then we homotope $\gamma|[a,b]$ inside $U$ relative to the end-points $\{\gamma(a),\gamma(b)\}$. If $p\in O_{\mathrm{sing}}$, we homotope the prescribed lift of $\gamma$ in the universal cover fixing the end-points and project it down to $O$. This process gives a new oriented curve which is said to be \emph{elementary homotopic} to $\gamma$. We say that two oriented closed curves are \emph{homotopic} if they are connected by a sequence of elementary homotopies. If the domain of $\gamma$ is given a distinguished base-point, we assume that every elementary homotopy of $\gamma$ fixes the base-point.  

Similar to usual fundamental groups, it is known that $\pi_1 ^{\mathrm{orb}}(O)$ is homeomorphic to the group of homotopy classes of oriented based closed curves. For a oriented  closed curve $\gamma$ with a base-point $t_0\in \bR/\mathbb{Z}$, we denote by $\gamma_{t_0}$ an element of $\pi_1^{\mathrm{orb}}(O, \gamma(t_0))$ represented by the oriented closed curve $\gamma$ with the base-point $t_0$. We say that an oriented closed curve $\gamma$ on a closed 2-orbifold $O$ is \emph{essential} if it is not homotopic in $O$ to a point or an orbifold boundary component of $O$. 

Since $O_{\mathrm{reg}}$ is a subspace of $|O|$, we have a natural homomorphism $\pi_1(O_{\mathrm{reg}})\to \pi_1^{\mathrm{orb}}(O)$ that is known to be surjective. Therefore it is sometimes convenient to regard an element $x\in \pi_1^{\mathrm{orb}}(O)$ as an oriented curve in $O_{\mathrm{reg}}$.

Let $\gamma$ and $\eta$ be two oriented closed curves on $O$. We define the set
\[
\gamma \sharp \eta = \{(t,s)\in \bS^1\times \bS^1\,|\,\gamma(t) = \eta(s) \text{ is a transverse intersection point}\}.
\]
By abusing language, we call each element of $\gamma \sharp \eta$ an \emph{intersection point} between $\gamma$ and $\eta$. Also it will be convenient to employ the following convention: given an element $p=(t_0,s_0)\in \gamma \sharp \eta$,  we write $\gamma_p := \gamma_{t_0}$ and $\eta_p := \eta_{s_0}$. Note that both are elements of $\pi_1^{\mathrm{orb}}(O,\gamma(t_0))=\pi_1^{\mathrm{orb}}(O,\eta(s_0))$. If the orbifold $O$ is oriented, we define  $\operatorname{sign} p\in \{1,-1\}$ for an intersection point $p=(t_0,s_0)\in \gamma\sharp \eta$ by the sign of the ordered bases $\{\dot{\gamma}(t_0), \dot{\eta}(s_0)\}$ of $\operatorname{T}_{\gamma(t_0)}O$.  The \emph{algebraic intersection number} $\ri(\gamma,\eta)$ is defined to be the signed sum 
\[
\ri(\gamma,\eta):=\sum_{p\in \gamma \sharp \eta} \operatorname{sign} p.
\]

We say that an element $x\in \pi_1^{\mathrm{orb}}(O)$ \emph{regular} is if its normalizer $N_{\pi_1 ^{\mathrm{orb}}(O)}(x)$ is an infinite cyclic group generated by $x$. An oriented closed curve in $O$  is regular if it represents a regular element of $\pi_1^{\mathrm{orb}}(O)$. 

Assume that $O$ is equipped with a hyperbolic structure. If an oriented closed curves $\gamma$ represents an infinite order element of $\pi_1^{\mathrm{orb}}(O)$, then $\gamma$ is freely homotopic to a closed geodesic or a geodesic full 1-suborbifold (see \cite{cgorb} for the definition). We say that two closed curves $\gamma$ and $\eta$ on $O$ \emph{intersect essentially} if their geodesic representatives have non-empty intersection. This definition in fact does not depend on the choice of a hyperbolic structure on $O$.

\begin{lemma}\label{lem:orbifold}
    Let $O$ be a closed non-elementary hyperbolic orbifold. Assume that $O$ is neither  $\bS^2(2,2,2,2,p)$, $p\ge 2$ nor $\bS^2(2,2,p,q)$, $\frac{1}{p}+\frac{1}{q}< 2$.
    \begin{itemize}
        \item[(i)] Any regular essential oriented simple closed curve can be homotoped inside $O_{\mathrm{reg}}$ to a regular oriented simple closed geodesics contained in  $O_{\mathrm{reg}}$. 
        \item[(ii)] An infinite order element $x\in \pi_1^{\mathrm{orb}}(O)$ is not regular if and only if there are order two elements $s_1$ and $s_2$ in $\pi_1^{\mathrm{orb}}(O)$ such that $x = s_1 s_2$. 
        \item[(iii)] Let $\gamma$ be a regular oriented closed curve. Then there is a regular simple closed curve $\eta$ intersecting $\gamma$ essentially. 
        \end{itemize}
\end{lemma}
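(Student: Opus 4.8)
The plan is to fix once and for all a hyperbolic structure on $O$ and to identify $\Gamma:=\pi_1^{\mathrm{orb}}(O)$ with a cocompact discrete subgroup of $\mathrm{Isom}^+(\mathbb{H}^2)=\PSL_2(\bR)$, so that every infinite order $x\in\Gamma$ becomes a hyperbolic isometry with a well-defined axis $A_x$ and all three statements become assertions about geodesics and axes. The one fact I will use everywhere is the following dichotomy: an element $g$ normalizes $\langle x\rangle$ if and only if it conjugates $x$ to $x^{\pm1}$ (conjugation preserves translation length, which pins down the exponent), if and only if $g$ preserves $A_x$; and a discrete group of isometries preserving a geodesic is either infinite cyclic (translations along it) or infinite dihedral (translations together with a half-turn, i.e.\ an order two elliptic fixing a point of the geodesic and reversing it). Hence $N_\Gamma(\langle x\rangle)$ is cyclic or infinite dihedral, and $x$ is non-regular precisely when a half-turn of $\Gamma$ preserves $A_x$. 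This already yields (ii): if $x=s_1s_2$ with $s_1^2=s_2^2=1$, then $s_1xs_1^{-1}=s_2s_1=x^{-1}$ and $s_1\notin\langle x\rangle$ (torsion versus torsion-free), so $x$ is not regular; conversely a half-turn $s$ on $A_x$ satisfies $sxs^{-1}=x^{-1}$, so $sx$ has order two, is nontrivial, and $x=s\cdot(sx)$ is the required factorization.

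For (i) I will pass to the geodesic representative $\gamma^{*}$ of the given regular essential (hence non-peripheral, infinite order) simple closed curve $\gamma$. That $\gamma^{*}$ is again simple is the surface theorem of Freedman--Hass--Scott, imported through a torsion-free finite-index subgroup (Selberg) and pushed back down. That $\gamma^{*}$ misses the singular locus is exactly where regularity enters: a closed geodesic cannot pass through a cone point, since the cone angle $<2\pi$ forbids a minimizing arc from continuing through it, and the alternative that $\gamma$ is freely homotopic to a geodesic full $1$-suborbifold occurs, by the dichotomy above, exactly when $A_\gamma$ carries a half-turn of $\Gamma$, i.e.\ exactly when $\gamma$ is not regular. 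Thus $\gamma^{*}\subset O_{\mathrm{reg}}$; indeed, for infinite order $\gamma$, "$\gamma^{*}\subset O_{\mathrm{reg}}$" and "$\gamma$ regular" are equivalent. Since $\gamma$ and $\gamma^{*}$ are freely homotopic in $O$ and both avoid the finitely many cone points, a transversality-and-surgery argument (equivalently, the orbifold version of "freely homotopic essential simple closed curves are isotopic") confines the homotopy to $O_{\mathrm{reg}}$.

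For (iii), the same reasoning places the geodesic $\gamma^{*}$ of the regular infinite order curve $\gamma$ inside $O_{\mathrm{reg}}$, which is a finite-type surface — the closed surface $|O|$ with the cone points deleted — in which $\gamma^{*}$ is an essential, non-peripheral, possibly self-intersecting loop. I will then produce a simple closed curve $\eta\subset O_{\mathrm{reg}}$ meeting $\gamma^{*}$ with positive geometric intersection number by classical surface topology: if $\gamma^{*}$ fills $O_{\mathrm{reg}}$ any essential simple closed curve works, and otherwise I complete a short arc transverse to $\gamma^{*}$ to an essential simple closed curve crossing it essentially. The delicate requirement is that $\eta$ be \emph{regular}; by (ii) this forces me to avoid every simple closed curve bounding a disk with exactly two order two cone points, since such a curve represents a product of two involutions. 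I expect the main obstacle to be precisely the verification that, outside the excluded orbifolds $\bS^2(2,2,2,2,p)$ and $\bS^2(2,2,p,q)$, there is always enough room to choose $\eta$ simultaneously regular and essentially intersecting $\gamma^{*}$ — a finite case analysis on the cone-point data together with Dehn-twist / change-of-coordinates moves to dodge the forbidden curves, using the already-recorded fact that $O_{\mathrm{reg}}$ carries essential simple closed curves. Granting that, the remainder is routine surface topology and Fuchsian-group bookkeeping.
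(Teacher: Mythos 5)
Your treatment of (ii) is correct and is essentially the paper's own argument: normalizing $\langle x\rangle$ means preserving the axis, discreteness plus orientation-preservation leaves only powers of $x$ and half-turns on the axis, and a half-turn gives the factorization $x=s\cdot(sx)$. For (i) your sketch (simplicity of the geodesic representative via a torsion-free finite cover, avoidance of cone points because cone angles are $<2\pi$, the full $1$-suborbifold alternative being exactly the non-regular case, then pushing the free homotopy into $O_{\mathrm{reg}}$) is at the level of the references the paper cites for this item, so I have no substantive objection there.

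The genuine gap is (iii), and it is precisely the step you flag and then defer: producing a \emph{regular} simple closed curve meeting $\gamma^*$ \emph{essentially}. That existence statement is the whole content of (iii) — it is exactly where the hypothesis excluding $\bS^2(2,2,2,2,p)$ and $\bS^2(2,2,p,q)$ is used — and ``a finite case analysis \dots Granting that'' does not establish it. The paper closes this by a concrete construction rather than a case analysis on $\gamma^*$: pair up the order-two cone points by pairwise disjoint geodesic full $1$-suborbifolds and split $O$ along them to obtain $O'$ with at most one order-two cone point (the sphere with six order-two cone points is treated separately by an explicit triple of curves); then choose a collection $\mathcal{S}$ of essential simple closed geodesics in $(O')_{\mathrm{reg}}$ whose complement in $O$ consists of disks, annuli and disks with one cone point. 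Each member of $\mathcal{S}$ is regular by (ii), since it bounds no disk with two order-two cone points and hence is not a product of two involutions; and the closed geodesic $\gamma^*$ of a regular class cannot be contained in any complementary piece, so it must cross some member of $\mathcal{S}$, and since everything in sight is geodesic the intersection is automatically essential. Your route also has two smaller soft spots that this geodesic-collection argument avoids: positive geometric intersection number with $\gamma^*$ inside the surface $O_{\mathrm{reg}}$ does not by itself give essential intersection in $O$ (a homotopy in $O$ may slide arcs across cone points, so you need $\eta$ realized as a geodesic or an intersection argument valid in $O$), and besides curves bounding a disk with two order-two cone points you must also avoid curves bounding a disk with a single cone point, which represent finite-order, hence non-regular, elements even though they are essential in the paper's sense.
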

\begin{proof}
As $O$ is given a hyperbolic structure, we may regard $\pi_1^{\mathrm{orb}}(O)$ as a discrete subgroup of $\operatorname{Isom}^+(\mathbb{H}^2)$. 

    (i) has been addressed in many places; see for instance \cite[Theorem 4.3]{cgorb}, \cite[Theorem 5.1]{tan} and \cite[Lemma 5.8]{baik2023}.

    (ii) Since $x$ has infinite order it is a hyperbolic element in $\operatorname{Isom}^+(\mathbb{H}^2)$. Note that elements in the normalizer of $x$ preserve the fixed points of $x$ in $\overline{\mathbb{H}^2}$. By the discreteness of $\pi_1^{\mathrm{orb}}(O)$, we know that each element $y$ in the normalizer of $x$ is either a power of $x$ or an order two element preserving two fixed points of $x$ in $\partial_\infty\mathbb{H}^2$. Hence, if $x$ is not regular, the normalizer of $x$ contains an order two element, say $s_1$. We then know that $s_1 x s_1 = x^{-1}$. Let $s_2 = s_1 x$. Since $s_2 ^2 = s_1 x s_1 x = x^{-1} x =1$, the forward implication is proved. The converse direction is a straightforward computation.
    
    (iii) It suffices to find a collection of simple regular closed geodesics whose complement consists of disks, annuli and disks with one cone point. 
    
    If $O$ is a sphere with six order two cone points, we take three simple regular closed curves as in Figure~\ref{fig:sixcone}. By (ii), these curves are regular. 
    
    For other 2-orbifolds $O$, we first construct $O'$ by splitting $O$ along pairwise disjoint geodesic full-1-suborbifolds joining pairs of order two cone points so that the $O'$ contains at most one order two cone point. The resulting orbifold $O'$ has totally geodesic boundary and its underlying space contains essential simple closed curves. It is a standard fact that one may find a collection of essential simple closed curves $\mathcal{S}$ in $(O')_{\mathrm{reg}}$ such that $(O')_{\mathrm{reg}}\setminus \mathcal{S}$ consists of disks, once punctured disks and annuli. Again, (ii) shows that these curves are regular and each component of $O\setminus \mathcal{S}$ is a disk, annulus or a disk with one cone point. 
    \end{proof}

    \begin{figure}[hbt]
        \centering
        \includegraphics[width=0.7\linewidth]{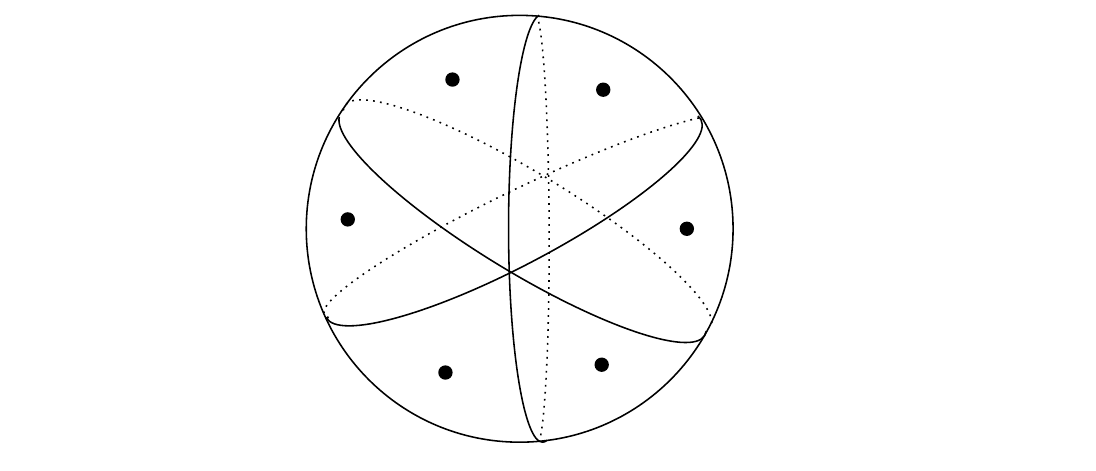}
        \caption{A sphere with six order two cone points. Black dots are order two cone points.}
        \label{fig:sixcone}
    \end{figure}

\subsection{Lie theory and Hitchin representations}
Let $G$ be a split real form of a complex simple Lie group $G^\mathbb{C}$ of adjoint type and let $\mathfrak{g}$ be its Lie algebra. In the Lie algebra $\mathfrak{g}$ of $G$, we choose a regular nilpotent element $E$. Then, we can always complete $E$ to a triple $\{E,F,H\}$ satisfying the $\mathfrak{sl}_2$-relations: $[H,E] = 2E$, $[H,F]=-2F$ and $[E,F]=H$. Such a principal $\mathfrak{sl}_2$-triple induces a representation $\mathfrak{sl}_2(\mathbb{C})\to \mathfrak{g}\otimes \mathbb{C}$ which can be integrated to a Lie group representation $\operatorname{PSL}_2(\mathbb{C})\to G^\mathbb{C}$. By  taking the real points,  one can find a Lie group representation $\iota_G:\PSL_2(\bR) \to G$. 

We  call a representation $\pi_1^{\mathrm{orb}}(O)\to G$ a \emph{Fuchsian representation} if it is of the form $\iota_G\circ \rho_{hyp}$ for some discrete faithful representation $\rho_{hyp}:\pi_1 ^{\mathrm{orb}}(O)\to \PSL_2(\bR)$. A representation $\rho:\pi_1^{\mathrm{orb}}(O)\to G$ is a $G$\emph{-Hitchin representation} if $\rho$ can be continuously deformed to a Fuchsian one. The set of $G$-Hitchin representations will be denoted by $\overline{\Hit}_G(O)$. We also define $\Hit_G(O) := \overline{\Hit}_G(O)/G$, the space of conjugacy classes of $G$-Hitchin representations. We will use the square bracket $[\rho]\in \Hit_G(O)$ to denote the conjugacy class of $\rho\in \overline{\Hit}_G(O)$. Topological properties of orbifold Hitchin components are studied in \cite{alessandrini}.

The Lie algebra $\mathfrak{g}$ of $G$ admits a $\Ad_G$-invariant non-degenerate symmetric bilinear form $\kf$. We have the Cartan involution $\theta:\mathfrak{g}\to \mathfrak{g}$, which makes the symmetric bilinear form 
\[
\kf_\theta(\cdot,\cdot) : = -\kf(\cdot, \theta(\cdot))
\]
positive definite on $\mathfrak{g}$. The Cartan involution $\theta$ decomposes $\mathfrak{g}$ into the direct sum $\mathfrak{g} = \mathfrak{k}\oplus \mathfrak{p}$, where $\mathfrak{k}$ is the +1 eigenspace of $\theta$ and $\mathfrak{p}$ is the $-1$ eigenspace of $\theta$. Hence, $\kf$ is positive definite on $\mathfrak{p}$ and negative definite on $\mathfrak{k}$.

Choose a maximal abelian subalgebra $\mathfrak{a}$ in $\mathfrak{p}$. Then we have the restricted root space decomposition
\[
\mathfrak{g} = \mathfrak{g}_0 + \bigoplus_{\beta\in \Pi} \mathfrak{g}_\beta
\]
where $\Pi\subset \mathfrak{a}^*\setminus\{0\}$ is the set of restricted roots and 
\[
\mathfrak{g}_\beta=\{X\in \mathfrak{g}\,|\, [A,X] = \beta(A)X\text{ for all }A\in \mathfrak{a}\}.
\]
If $G$ is real split, we have 
\[
\mathfrak{g}_0=\{X\in \mathfrak{g}\,|\,[A,X]=0\text{ for all }A\in \mathfrak{a}\} = \mathfrak{a}.
\]
This decomposition is orthogonal with respect to $\kf_\theta$.

We now define the Jordan projection. Choose an ordering on $\Pi$ and let $\Delta$ be the set of simple roots with respect to this order. Let 
\[
\mathfrak{a}^+=\{A\in \mathfrak{a}\,|\, \beta(A)>0 \text{ for all }\beta\in \Delta\}
\]
and let $\overline{\mathfrak{a}^+}$ be the closure of $\mathfrak{a}^+$. Any element $x\in G$ can be uniquely written as the product $x=x_k x_h x_u$ of commuting elements where $x_k$ is conjugate into the analytic subgroup of $\mathfrak{k}$, $x_h$ is in the conjugacy class of $\exp \overline{\mathfrak{a}^+}$ and $x_u$ is unipotent. We know that the conjugacy class of $x_h$ intersects  $\exp \overline{\mathfrak{a}^+}$ at a unique point. The \emph{Jordan projection} of $x$ is a unique element $\lambda(x)\in \overline{\mathfrak{a}^+}$ such that $\exp \lambda(x)$ is conjugate to $x_h$. This defines a map $\lambda:G\to \overline{\mathfrak{a}^+}$. 

We say that an element $x\in G$ is \emph{loxodromic} if $\lambda(x)\in \mathfrak{a}^+$. Let $Z_K(\mathfrak{a})$ be the centralizer of $\mathfrak{a}$ in the analytic subgroup of $\mathfrak{k}$. Then $x\in G$ is loxodromic if and only if $x$ is conjugate into $y\exp\mathfrak{a}^+$ for some $y\in Z_K(\mathfrak{a})$. We say $x\in G$ is \emph{purely loxodromic} if $x$ is conjugate into $\exp \mathfrak{a}^+$.

\begin{example}\label{ex:psl}
As a concrete example, take $G=\PSL_n(\bR)$. Then the Cartan involution $\theta$ and the invariant symmetric bilinear form $\kf$ may be taken to be $X\mapsto - X^{\operatorname{T}}$ and $\kf(X,Y) = \Tr(XY)$ respectively. An element $x\in \PSL_n(\bR)$ is loxodromic if, for some lift $x'\in \operatorname{SL}_n(\bR)$ of $x$, the eigenvalues of $x'$ are real and their absolute values are all distinct. $x\in \PSL_n(\bR)$ is purely loxodromic if $x$ can be lifted to $x'\in \operatorname{SL}_n(\bR)$ whose eigenvalues are positive and distinct. For an element $x\in \operatorname{GL}_n(\bR)$, let $\varepsilon_i(x)$ be (complex) eigenvalues of $x$ such that $|\varepsilon_1(x)|\ge |\varepsilon_2(x)|\ge\cdots \ge |\varepsilon_n(x)|$.  The Jordan projection of a loxodromic element $x$ is $\diag(\log |\varepsilon_1(x')|, \cdots, \log |\varepsilon_n(x')|)$ where $x'\in \operatorname{SL}_n(\bR)$ is a lift of $x$.   
\end{example}

\section{The Goldman Product Formula}\label{sec:goldman}
In this section we introduce the Goldman product formula which computes the Atiyah-Bott-Goldman symplectic form on Hamiltonian vector fields of a certain kind.

Let $G$ be a real semi-simple Lie group. and let $U\subset G$ be a non-empty open set which is invariant under conjugation in the sense that $x Ux^{-1} \subset U$ for all $x\in G$. A differentiable function $f:U\to \mathbb{R}$ is \emph{invariant} if $f(yxy^{-1}) = f(x)$ for all $x\in U$ and $y\in G$. We say that an invariant function $f:U\to \bR$ is \emph{homogeneous} if $f(x^k) = k f(x)$ for any $x\in U$ and any positive integer $k$ with $x^k\in U$.  

Given an invariant function $f$ we define the \emph{Goldman function}  $\widehat{f}:U \to \mathfrak{g}$ which associates to $x\in U$  a unique element $\widehat{f}(x)\in \mathfrak{g}$  satisfying
\[
\kf( \widehat{f}(x), Y) = \left.\frac{\rd}{\rd t}\right|_{t=0} f(x \exp tY) 
\]
for all $Y\in \mathfrak{g}$.

Let $f:U\to \bR$ be an invariant function such that the domain $U$ contains all purely loxodromic elements and let $O$ be a closed orientable orbifold of negative Euler characteristic. For a given infinite order element $x\in \pi_1 ^{\mathrm{orb}}(O)\setminus \{1\}$, we define the differentiable function $f_x:\Hit_G(O)\to \bR$ by  $f_x( [\rho] ) = f(\rho(x))$. This function is well-defined because $\rho(x)$ is always purely loxodromic; see Lemma~\ref{lem:generic}. Since $f$ is an invariant function, we know that $f_x$ depends only on the conjugacy class of $x$. Hence, we may say that $f_x$ is defined for the homotopy class of oriented essential closed curves representing $x$.

We are now ready to state the product formula by Goldman \cite[Theorem 3.5]{Goldman86}. The original statement is about symplectic geometry. At this moment, however, we only need the existence of Hamiltonian vector fields, which one may take it for granted. We will explain the construct of these Hamiltonian flows in Section~\ref{sec:lifting}.

\begin{theorem}[Goldman Product formula]\label{thm:goldmanproduct}
    Let $S$ be a closed oriented surface of genus $>1$. Let $f$ and $g$ be invariant functions. Let $x$ and $y$ be oriented essential closed curves in $S$. Assume that all intersections points of $x$ and $y$ are transverse double points. Let $\mathbb{X}_{g_y}$ be the Hamiltonian vector field associated to $g_y$ with respect to the Atiyah-Bott-Goldman symplectic form. Then, at any point $[\rho]\in \Hit_G(S)$, we have
    \[
    \mathbb{X}_{g_y} (f_x)|_{[\rho]}=\sum_{p\in x\sharp y} (\sign p ) \kf( \widehat{f}(\rho(x_p)), \widehat{g}(\rho(y_p))).
    \]
\end{theorem}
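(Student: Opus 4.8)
The plan is to follow Goldman's original argument from \cite{Goldman86}, working on the character variety $\Hom(\pi_1(S),G)/G$, whose smooth locus contains $\Hit_G(S)$ and whose tangent space at $[\rho]$ is canonically $H^1(\pi_1(S),\mathfrak{g}_{\Ad\rho})$; under this identification the Atiyah--Bott--Goldman form $\omega$ is the antisymmetric pairing obtained by cup product followed by $\kf\colon\mathfrak{g}\otimes\mathfrak{g}\to\bR$ and evaluation against the fundamental class of $S$. It is convenient to pass to the de Rham model, where a tangent vector is a closed $\mathfrak{g}_{\Ad\rho}$-valued $1$-form $v$ on the flat bundle and $\omega(u,v)=\int_S\kf(u\wedge v)$. \emph{Step 1.} I would first compute the differential of $f_x([\rho])=f(\rho(x))$ for an infinite order $x$. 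Deforming $\rho$ along a path with $\rho_t(x)=\exp(tu(x)+O(t^2))\rho(x)$ and using $\Ad$-invariance of $f$ together with the defining property of the Goldman function gives
\[
d f_x(u)=\left.\frac{\rd}{\rd t}\right|_{t=0}f\big(\rho(x)\exp(tu(x))\big)=\kf\big(\widehat f(\rho(x)),u(x)\big),
\]
where $u(x)$ is the value of the cocycle at $x$ read off from a base point on the curve. Invariance of $f$ also forces $\widehat f(\rho(x))$ to be $\Ad\rho(x)$-invariant, hence to commute with $\rho(x)$; this is what will make the twist deformation below well defined.

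\emph{Step 2: localizing the Hamiltonian vector field.} Since $\widehat f(\rho(x))$ is $\Ad\rho(x)$-invariant it extends to a parallel section of $\mathfrak{g}_{\Ad\rho}$ over a tubular neighborhood $N(x)$ of $|x|$. Choosing a closed $1$-form $\eta_x$ supported in $N(x)$ whose integral across a small transversal is $1$ (a de Rham representative of the Poincar\'e dual of the cycle $x$), the product $\widehat f(\rho(x))\,\eta_x$ is a closed $\mathfrak{g}_{\Ad\rho}$-valued $1$-form supported near $|x|$. I would then check that it represents $\mathbb{X}_{f_x}$: for any closed $\mathfrak{g}_{\Ad\rho}$-valued $1$-form $v$,
\[
\int_S \kf\big(\widehat f(\rho(x))\,\eta_x\wedge v\big)=\kf\Big(\widehat f(\rho(x)),\int_{|x|}v\Big)=d f_x(v),
\]
because wedging against $\eta_x$ and integrating over the annulus extracts exactly the period of $v$ along $x$; thus $\iota_{\mathbb{X}_{f_x}}\omega=d f_x$. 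This localized cocycle is the infinitesimal generator of the generalized twist flow along $x$, whose existence is discussed in Section~\ref{sec:lifting}.

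\emph{Step 3: the product formula.} Now $\mathbb{X}_{g_y}(f_x)|_{[\rho]}=d f_x(\mathbb{X}_{g_y})=\omega(\mathbb{X}_{f_x},\mathbb{X}_{g_y})$, and representing $\mathbb{X}_{f_x}$ by $\widehat f(\rho(x))\,\eta_x$ and $\mathbb{X}_{g_y}$ by $\widehat g(\rho(y))\,\eta_y$,
\[
\mathbb{X}_{g_y}(f_x)|_{[\rho]}=\int_S \kf\big(\widehat f(\rho(x))\,\eta_x\wedge \widehat g(\rho(y))\,\eta_y\big).
\]
The two factors are supported near $|x|$ and $|y|$ respectively, and $x,y$ meet in transverse double points, so the integrand is supported in small disks, one around each $p\in x\sharp y$, and the integral becomes a finite sum of local integrals. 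Near $p$ the product of bump forms $\eta_x\wedge\eta_y$ integrates to $\sign p$, the sign of the ordered frame $\{\dot x(t_0),\dot y(s_0)\}$; parallel-transporting the flat sections to $p$ along sub-arcs of $x$ and $y$ turns $\widehat f(\rho(x))$ into $\widehat f(\rho(x_p))$ and $\widehat g(\rho(y))$ into $\widehat g(\rho(y_p))$ via $\widehat f(g x g^{-1})=\Ad g\,\widehat f(x)$, and any remaining conjugation is absorbed by $\Ad$-invariance of $\kf$. Hence the contribution at $p$ is $(\sign p)\,\kf(\widehat f(\rho(x_p)),\widehat g(\rho(y_p)))$, and summing over $p$ yields the claimed identity; when $x$ or $y$ is not embedded I would run the same computation on the universal cover (equivalently, in a regular neighborhood of the immersed curve) so that each element of $x\sharp y$ is counted once with its sign.

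\emph{Main obstacle.} The delicate point is Step 2 together with the base-point bookkeeping of Step 3: one must produce an honest cocycle (de Rham or group-theoretic) localized on the a priori only immersed curve $x$, verify that it is precisely the $\omega$-Hamiltonian representative of $d f_x$ --- where the commutation of $\widehat f(\rho(x))$ with $\rho(x)$ is essential, since it is what makes the twist a cocycle --- and then confirm that restricting the single-base-point data near each intersection point reproduces the based loops $x_p$ and $y_p$, so that all auxiliary conjugations cancel against the invariance of $f$, $g$, and $\kf$ and the overall sign matches $\sign p$. Note that only $\Ad$-invariance of $f$ and $g$ is used; homogeneity plays no role in this formula (it enters only when iterating along powers of a curve).
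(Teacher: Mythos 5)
This theorem is not proved in the paper at all—it is quoted directly from Goldman \cite[Theorem~3.5]{Goldman86}—and your proposal is essentially a faithful reconstruction of Goldman's original argument: identify tangent vectors with classes in $H^1(\pi_1(S),\mathfrak{g}_{\Ad\rho})$, represent $df_x$ by the Poincar\'e dual of the cycle $x$ weighted by the parallel (because $\Ad\rho(x)$-invariant) section $\widehat f(\rho(x))$, and localize the pairing $\omega(\mathbb{X}_{f_x},\mathbb{X}_{g_y})$ at the transverse intersection points, where trivializing the local system at each $p$ produces exactly $\kf(\widehat f(\rho(x_p)),\widehat g(\rho(y_p)))$ with sign $\sign p$. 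The sketch is correct as it stands (including your correct observations that only invariance, not homogeneity, is used, and that non-embedded curves are handled by working with the immersed annulus), so there is nothing to compare against on the paper's side.
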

We remark that the transverse double intersection condition in the theorem is not essential. If $x$ and $y$ have a multiple intersection, we can always resolve it to a collection of double intersections by perturbing $x$ and $y$ near the intersection point.

Goldman's motivation for promoting Theorem~\ref{thm:goldmanproduct} was to compute the Atiyah-Bott-Goldman symplectic form between two Hamiltonian vector fields. Besides its origin, we will focus on the aspect that Theorem~\ref{thm:goldmanproduct} provides the first variation formula for functions along Hamiltonian flows. This viewpoint sheds light on the way toward our main theorems.

In the remaining part of this section, we compute the Goldman function for a certain class of invariant functions. 

We begin with a simple observation:
\begin{lemma}\label{lem:conjugation}
    Let $\widehat{f}:U\to \mathfrak{g}$ be the Goldman function of an invariant function $f$. Then for any $x\in U$ and $y\in G$, we have $\widehat{f}(yxy^{-1}) = \Ad_{y} \widehat{f}(x)$.
\end{lemma}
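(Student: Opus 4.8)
The plan is to unwind the defining property of the Goldman function and use the $\operatorname{Ad}$-invariance of the bilinear form $\kf$ together with the invariance of $f$. First I would fix $x\in U$, $y\in G$ and an arbitrary test vector $Y\in\mathfrak{g}$, and compute
\[
\kf\bigl(\widehat{f}(yxy^{-1}),Y\bigr)=\left.\frac{\rd}{\rd t}\right|_{t=0} f\bigl(yxy^{-1}\exp(tY)\bigr).
\]
The key algebraic move is to rewrite $yxy^{-1}\exp(tY)=y\bigl(x\exp(t\,\operatorname{Ad}_{y^{-1}}Y)\bigr)y^{-1}$, using $y^{-1}\exp(tY)y=\exp(t\,\operatorname{Ad}_{y^{-1}}Y)$. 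Then invariance of $f$ gives $f\bigl(yxy^{-1}\exp(tY)\bigr)=f\bigl(x\exp(t\,\operatorname{Ad}_{y^{-1}}Y)\bigr)$, so differentiating at $t=0$ yields $\kf\bigl(\widehat{f}(x),\operatorname{Ad}_{y^{-1}}Y\bigr)$.

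Next I would invoke $\operatorname{Ad}$-invariance of $\kf$, namely $\kf(\operatorname{Ad}_{y^{-1}}A,\operatorname{Ad}_{y^{-1}}B)=\kf(A,B)$, equivalently $\kf(A,\operatorname{Ad}_{y^{-1}}Y)=\kf(\operatorname{Ad}_y A,Y)$, to conclude
\[
\kf\bigl(\widehat{f}(yxy^{-1}),Y\bigr)=\kf\bigl(\widehat{f}(x),\operatorname{Ad}_{y^{-1}}Y\bigr)=\kf\bigl(\operatorname{Ad}_y\widehat{f}(x),Y\bigr).
\]
Since this holds for all $Y\in\mathfrak{g}$ and $\kf$ is non-degenerate, we get $\widehat{f}(yxy^{-1})=\operatorname{Ad}_y\widehat{f}(x)$, which is the claim. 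I should also note at the start that the hypothesis that $U$ is conjugation-invariant guarantees $yxy^{-1}\in U$, so $\widehat{f}(yxy^{-1})$ is defined.

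There is no real obstacle here; the only point requiring a modicum of care is the interchange of the conjugation and the one-parameter subgroup, i.e. the identity $y^{-1}(\exp tY)y=\exp(t\operatorname{Ad}_{y^{-1}}Y)$, which is the standard relation between the group conjugation and the adjoint representation and needs no further justification. Everything else is a formal manipulation of the definition, so I expect the proof to be about four lines.
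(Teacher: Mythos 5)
Your proof is correct: unwinding the defining identity $\kf(\widehat{f}(x),Y)=\frac{\rd}{\rd t}\big|_{t=0}f(x\exp tY)$, moving the conjugation across the one-parameter subgroup via $y^{-1}\exp(tY)y=\exp(t\Ad_{y^{-1}}Y)$, and then using invariance of $f$, $\Ad$-invariance and non-degeneracy of $\kf$ is exactly the intended argument, and you rightly note that conjugation-invariance (and openness) of $U$ makes every term defined. The paper states this lemma as a simple observation without proof, and your four-line argument is precisely the one it implicitly relies on.
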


Let $\lambda:G\to \overline{\mathfrak{a}^+}$ be the Jordan projection. By definition, we have $\lambda(yxy^{-1}) = \lambda(x)$ for all $x,y\in G$. Hence, given a non-trivial linear functional $\alpha\in \mathfrak{a}^*$, the function $\alpha \circ \lambda :G\to \bR$ satisfies $\alpha\circ \lambda (yxy^{-1}) = \alpha \circ \lambda (x)$ for all $x,y\in G$. 

We are interested in the case when the domain $U\subset G$ is given by the set of purely loxodromic elements. Defined on $U$, we are concerned with the function $\alpha\circ \lambda$. To say that $\alpha\circ \lambda$ is an invariant function, we need to verify that $U$ is open and that $\lambda$ is at least differentiable. As we could not find an explicit statement from existing literature, we decide to give their proofs in Appendix~\ref{app:open}. Here we just state the results.  

\begin{lemma}\label{lem:openness} Let $G$ be a split real form of a complex simple adjoint Lie group. Let $\lambda:G\to \overline{\mathfrak{a}^+}$ be the Jordan projection and let $U$ be the set of purely loxodromic elements. Then
\begin{itemize}
    \item[(i)] $U$ is open,
    \item[(ii)] $\lambda$ is real analytic on $U$.
\end{itemize}
\end{lemma}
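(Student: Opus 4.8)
\textbf{Proof proposal for Lemma~\ref{lem:openness}.}

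The plan is to exploit the standard description of loxodromic elements via their Jordan decomposition and the exponential map on a neighborhood of the identity coset in $Z_K(\mathfrak{a})\cdot\exp\mathfrak{a}$, then transport everything to an arbitrary purely loxodromic $x$ by conjugation. First I would recall that an element $g\in G$ is purely loxodromic if and only if it is conjugate to some $a=\exp A$ with $A\in\mathfrak{a}^+$; moreover for such $g$ the centralizer $Z_G(g)$ meets $\exp\mathfrak{a}$ in $\exp\mathfrak{a}$ and the full centralizer is (the identity component of) $Z_K(\mathfrak{a})\cdot\exp\mathfrak{a}$, because $G$ is split (so $\mathfrak{g}_0=\mathfrak{a}$) and adjoint. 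The key structural input is that the map
\[
\Phi:\ G\times\mathfrak{a}^+\ \longrightarrow\ G,\qquad (y,A)\ \longmapsto\ y(\exp A)y^{-1}
\]
has image exactly $U$, and I would show it is a submersion at every point. Differentiating, the image of $d\Phi_{(y,A)}$ is $\Ad_y\bigl([\mathfrak{g},A]+\mathfrak{a}\bigr)$ up to the left translation by $\exp A$; since $[\mathfrak{g},A]=\bigoplus_{\beta\in\Pi}\mathfrak{g}_\beta$ whenever $A\in\mathfrak{a}^+$ (as $\beta(A)\neq 0$ forces $\mathrm{ad}(A)$ to be invertible on each $\mathfrak{g}_\beta$), and $\mathfrak{g}=\mathfrak{a}\oplus\bigoplus_{\beta\in\Pi}\mathfrak{g}_\beta$, we get $[\mathfrak{g},A]+\mathfrak{a}=\mathfrak{g}$. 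Hence $\Phi$ is a submersion onto $U$, which immediately gives (i): $U=\Phi(G\times\mathfrak{a}^+)$ is open.

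For (ii), the cleanest route is to build a real-analytic local section of $\Phi$ near each $a=\exp A_0$, $A_0\in\mathfrak{a}^+$, and use $G$-conjugation-invariance of $\lambda$. Concretely, let $\mathfrak{q}$ be an $\mathrm{Ad}(a)$-invariant complement to $\mathfrak{a}$ in $\mathfrak{g}$ (for instance $\mathfrak{q}=\bigoplus_{\beta\in\Pi}\mathfrak{g}_\beta$), and consider the real-analytic map
\[
\Psi:\ \mathfrak{q}\times\mathfrak{a}^+\ \longrightarrow\ G,\qquad (Q,A)\ \longmapsto\ (\exp Q)(\exp A)(\exp Q)^{-1}.
\]
By the computation above, $d\Psi_{(0,A_0)}$ is an isomorphism onto $\mathfrak{g}$ (it sends $\mathfrak{q}$ onto $[\mathfrak{g},A_0]=\bigoplus_\beta\mathfrak{g}_\beta$ bijectively and $\mathfrak{a}$ onto $\mathfrak{a}$, after removing the left translation by $a$), so by the real-analytic inverse function theorem $\Psi$ is a real-analytic diffeomorphism from a neighborhood of $(0,A_0)$ onto a neighborhood $W$ of $a$ in $U$. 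On $W$, writing $\Psi^{-1}(g)=(Q(g),A(g))$ with $Q,A$ real analytic, conjugation-invariance of the Jordan projection gives $\lambda(g)=\lambda(\exp A(g))=A(g)$ for $g\in W$, because $A(g)\in\mathfrak{a}^+$ already lies in the fundamental domain $\overline{\mathfrak{a}^+}$. Thus $\lambda=A$ is real analytic on $W$. Finally, every $x\in U$ is $G$-conjugate to some such $a$, say $x=zaz^{-1}$; then $zWz^{-1}$ is an open neighborhood of $x$ in $U$ and $\lambda$ restricted to it equals $\lambda$ on $W$ precomposed with the real-analytic conjugation $g\mapsto z^{-1}gz$, hence is real analytic. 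Covering $U$ by such neighborhoods completes the proof.

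The main obstacle I anticipate is not the analytic bootstrapping — that is routine once the submersion statement is in hand — but pinning down precisely that $\lambda(\exp A)=A$ for $A\in\mathfrak{a}^+$ \emph{and} that the local section $A(g)$ indeed lands in $\mathfrak{a}^+$ rather than merely in $\mathfrak{a}$; one must argue that points near $a=\exp A_0$ with $A_0\in\mathfrak{a}^+$ stay in the open Weyl chamber, which follows by shrinking $W$. A secondary subtlety is justifying that $\mathrm{ad}(A)$ is invertible on $\bigoplus_{\beta}\mathfrak{g}_\beta$ for $A\in\mathfrak{a}^+$: this is exactly the condition $\beta(A)\neq 0$ for all restricted roots $\beta$, which holds on $\mathfrak{a}^+$ since each $\beta$ is a non-negative integer combination of simple roots with at least one positive coefficient, hence $\beta(A)>0$ there. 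One should also note that the choice of $\mathrm{Ad}(a)$-invariant complement $\mathfrak{q}$ is available precisely because $\mathrm{ad}(A_0)$ has no eigenvalue zero off $\mathfrak{a}$, so $\mathfrak{g}=\ker\mathrm{ad}(A_0)\oplus\mathrm{im}\,\mathrm{ad}(A_0)=\mathfrak{a}\oplus\bigoplus_\beta\mathfrak{g}_\beta$ is automatically $\mathrm{Ad}(a)$-stable.
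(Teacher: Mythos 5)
Your proposal is correct. For part (ii) it is essentially the paper's argument: the paper considers $F(a,x)=xax^{-1}$ on $A^+\times G$, computes the same Jacobian at $(a_0,1)$ (the only nontrivial entries being $\partial g_i/\partial z_j=-\delta_{ij}\beta_i(A_0)\neq 0$), extracts a real-analytic local section by the implicit function theorem, and reads off $\lambda$ from the $A^+$-factor; your restriction to the complement $\mathfrak{q}=\bigoplus_\beta\mathfrak{g}_\beta$ merely upgrades the submersion to a local diffeomorphism, and the final conjugation-invariance step is the same (the paper leaves it implicit in ``it is enough to show analyticity near $A^+$''). For part (i), however, you take a genuinely different route: the paper passes to the adjoint representation and argues that the locus where $\lambda$ lands on the walls $\partial\overline{\mathfrak{a}^+}$ is closed, using continuity of eigenvalues, whereas you get openness of $U$ for free from the fact that $\Phi(y,A)=y(\exp A)y^{-1}$ is a submersion on $G\times\mathfrak{a}^+$ with image exactly $U$. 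Your version is more unified (one differential computation serves both parts) and has the advantage of detecting pure loxodromy directly — it works with conjugates of $\exp\mathfrak{a}^+$ itself rather than with the position of $\lambda(x)$ relative to the chamber walls, which is the only thing the eigenvalue-continuity argument sees. Two harmless imprecisions you may wish to fix: the tangent image of $\Phi$ is naturally $\Ad_y\bigl((1-\Ad_{\exp A})\mathfrak{g}+\mathfrak{a}\bigr)$, which equals $\Ad_y\bigl([\mathfrak{g},A]+\mathfrak{a}\bigr)$ as a subspace because $1-e^{\beta(A)}$ and $\beta(A)$ are simultaneously nonzero on each $\mathfrak{g}_\beta$; and a general restricted root is a non-negative integer combination of simple roots only up to sign, so on $\mathfrak{a}^+$ one has $\beta(A)>0$ for positive roots and $\beta(A)<0$ for negative ones — all that is needed is $\beta(A)\neq 0$, which indeed holds.
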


\begin{lemma}\label{lem:goldmanfunctioncompute}
Let $G$ be a split real form of a complex simple adjoint Lie group. Let $U$ be the set of purely loxodromic elements in $G$. Let $f:U\to \bR$ be given by $f:=\alpha \circ \lambda$. Then,
\begin{itemize}
    \item[(i)] $f$ is a real analytic invariant function, 
    \item[(ii)]$f$ is homogeneous,
    \item[(iii)] for an element $x\in \exp  \mathfrak{a}^+$, we have
    \[
    \widehat{f}(x)= \alpha^\vee,\quad\text{ and }\quad  \widehat{f}(x^{-1} ) = w_0 \alpha^\vee.
    \]
    where $\alpha^\vee\in \mathfrak{a}$ is a unique element in $\mathfrak{a}$ satisfying $\kf(\alpha^\vee, Y) = \alpha(Y)$ for all $Y\in \mathfrak{a}$ and $w_0$ is the longest element of the Weyl group of $G$.
\end{itemize}
\end{lemma}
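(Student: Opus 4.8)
The plan is to dispatch parts (i) and (ii) quickly and to concentrate on (iii), where the only real work lies. For (i): openness of $U$ and real analyticity of $\lambda$ on $U$ are exactly Lemma~\ref{lem:openness}, and since $\alpha$ is linear, $f=\alpha\circ\lambda$ is real analytic on $U$. The set $U$ is invariant under conjugation because conjugation preserves the property of being conjugate into $\exp\mathfrak{a}^+$, and $f(yxy^{-1})=f(x)$ because the Jordan projection is conjugation invariant by construction; hence $f$ is an invariant function. For (ii), write a purely loxodromic $x$ as $x=g\exp(A)g^{-1}$ with $A\in\mathfrak{a}^+$. Then $x^k=g\exp(kA)g^{-1}$ and $kA\in\mathfrak{a}^+$ for every integer $k\ge 1$, so $x^k\in U$ and $\lambda(x^k)=kA=k\lambda(x)$; therefore $f(x^k)=\alpha(kA)=kf(x)$.

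The substance is (iii), and the crux is to differentiate the Jordan projection along $t\mapsto \exp(A)\exp(tY)$ at $t=0$, where $A\in\mathfrak{a}^+$ is fixed and $Y\in\mathfrak{g}$ is arbitrary. Using the restricted root space decomposition $\mathfrak{g}=\mathfrak{a}\oplus\bigoplus_{\beta\in\Pi}\mathfrak{g}_\beta$, write $Y=Y_\mathfrak{a}+\sum_\beta Y_\beta$. I claim
\[
\left.\frac{d}{dt}\right|_{t=0}\lambda\bigl(\exp(A)\exp(tY)\bigr)=Y_\mathfrak{a}.
\]
For $Y\in\mathfrak{a}$ this is immediate, since $\exp(A)\exp(tY)=\exp(A+tY)$ lies in $\exp\mathfrak{a}^+$ for small $t$, so its Jordan projection is $A+tY$. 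For the $\mathfrak{g}_\beta$-directions I would kill the first-order term by conjugation: put $g_t=\exp(tZ)$ with $Z=\sum_\beta Z_\beta$ and $Z_\beta:=\dfrac{Y_\beta}{1-e^{-\beta(A)}}\in\mathfrak{g}_\beta$, which is well defined because $A$ is regular, so $\beta(A)\ne 0$. A first-order (Baker--Campbell--Hausdorff) computation, using $\Ad_{\exp(-A)}Z_\beta=e^{-\beta(A)}Z_\beta$, gives $g_t\exp(A)\exp(tY)g_t^{-1}=\exp\bigl(A+tY_\mathfrak{a}+O(t^2)\bigr)$, which again lies in $\exp\mathfrak{a}^+$ for small $t$. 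Since $\lambda$ is conjugation invariant and, by Lemma~\ref{lem:openness}, agrees with the identity map on $\exp\mathfrak{a}^+$ near $\exp(A)$, the claim follows. (Equivalently, one can argue that $(C,B)\mapsto\exp(C)\exp(B)\exp(-C)$, with $C\in\bigoplus_\beta\mathfrak{g}_\beta$ near $0$ and $B\in\mathfrak{a}$ near $A$, is a local diffeomorphism onto a neighborhood of $\exp(A)$ in $U$ — its differential at $(0,A)$ is invertible precisely because $A$ is regular — and that in these coordinates $\lambda$ is the projection onto the $B$-factor.)

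Granting the claim, $\left.\frac{d}{dt}\right|_{t=0}f\bigl(\exp(A)\exp(tY)\bigr)=\alpha(Y_\mathfrak{a})$ for all $Y\in\mathfrak{g}$. By $\Ad$-invariance of $\kf$ one has $\kf(\mathfrak{g}_0,\mathfrak{g}_\beta)=0$ for every $\beta\ne 0$, so $\kf(\alpha^\vee,Y)=\kf(\alpha^\vee,Y_\mathfrak{a})=\alpha(Y_\mathfrak{a})$; nondegeneracy of $\kf$ then forces $\widehat{f}(\exp A)=\alpha^\vee$, and since $A\in\mathfrak{a}^+$ was arbitrary this is the first formula of (iii) for every $x\in\exp\mathfrak{a}^+$. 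For the inverse, rather than redoing the computation I would note that $w_0\mathfrak{a}^+=-\mathfrak{a}^+$ and $w_0^2=1$, so $-w_0A\in\mathfrak{a}^+$, and choosing a representative $\dot w_0\in N_K(\mathfrak{a})$ of $w_0$ we get $x^{-1}=\exp(-A)=\dot w_0\exp(-w_0A)\dot w_0^{-1}$. Lemma~\ref{lem:conjugation} then yields $\widehat{f}(x^{-1})=\Ad_{\dot w_0}\widehat{f}(\exp(-w_0A))=\Ad_{\dot w_0}\alpha^\vee=w_0\alpha^\vee$, using that $\Ad_{\dot w_0}$ restricts on $\mathfrak{a}$ to the Weyl element $w_0$.

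I expect the variation of the Jordan projection in (iii) to be the only genuine obstacle: one must be sure that a first-order perturbation in the root directions $\mathfrak{g}_\beta$ does not move $\lambda$ to first order, and this uses both the regularity of $A\in\mathfrak{a}^+$ (so that the conjugating correction $Z$ exists) and the openness and analyticity from Lemma~\ref{lem:openness} (so that $\lambda$ is honestly differentiable near $\exp A$ and is given there by the naive formula). Everything else reduces to linear algebra with $\kf$ and the Weyl group.
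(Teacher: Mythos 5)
Your proposal is correct, and its core is the same as the paper's: expand $x\exp tY$ by Baker--Campbell--Hausdorff, decompose $Y$ along the restricted root spaces, and kill the first-order root-space terms by conjugation, so that only $\alpha(Y_{\mathfrak a})$ survives; the identity $\kf(\alpha^\vee,Y)=\alpha(Y_{\mathfrak a})$ then gives $\widehat f(x)=\alpha^\vee$ (you use $\kf(\mathfrak a,\mathfrak g_\beta)=0$, the paper uses orthogonality with respect to $\kf_\theta$ -- equivalent). Two organizational differences are worth noting. First, you conjugate by the explicit element $g_t=\exp(tZ)$ with $Z_\beta=Y_\beta/(1-\re^{-\beta(A)})$, which makes the cancellation completely transparent (and your coefficient is exactly right); the price is that $g_t(x\exp tY)g_t^{-1}=\exp(A+tY_{\mathfrak a}+O(t^2))$ lies in $\exp\mathfrak a^+$ only to first order, so you must invoke differentiability of $\lambda$ near $\exp A$ (Lemma~\ref{lem:openness}) or your parenthetical local-coordinate argument to conclude $\tfrac{\rd}{\rd t}\big|_{t=0}\lambda(x\exp tY)=Y_{\mathfrak a}$ -- you do flag this, and it is exactly the role the paper assigns to Lemma~\ref{lem:openness} when it instead takes an implicit analytic conjugating path $g_t$ landing exactly in $\exp\mathfrak a^+$ and solves for the bracket constraints on its first-order term. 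Second, for $\widehat f(x^{-1})$ you avoid redoing the expansion: since $-w_0A\in\mathfrak a^+$ and $x^{-1}=\dot w_0\exp(-w_0A)\dot w_0^{-1}$ for a representative $\dot w_0\in N_K(\mathfrak a)$, Lemma~\ref{lem:conjugation} gives $\widehat f(x^{-1})=\Ad_{\dot w_0}\alpha^\vee=w_0\alpha^\vee$ directly; the paper instead runs the $x^{-1}\exp tY$ expansion in parallel with an extra $w_0$-conjugation. Your shortcut is cleaner and equally rigorous, so the proposal stands as a valid proof with a mild streamlining of the second half of (iii).
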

\begin{proof}
    (i) This follows from Lemma~\ref{lem:openness}.

    (ii) Let $x\in U$. One can find $y\in G$ such that $yxy^{-1}=\exp X$ for some $X\in \mathfrak{a}^+$. For any positive integer $k\ne 0$,  we observe
    \[
    \lambda((\exp X)^k) = kX.
    \]
    Thus,  we have
    \[
    \lambda(x^k)=\Ad_{y^{-1}} \lambda((\exp X)^k)=k\Ad_{y^{-1}} \lambda(\exp X)=k\lambda (x).
    \]
    (iii) Write $x^{\pm 1}= \exp \pm X$ for some $X\in \mathfrak{a}^+$. Let $Y\in \mathfrak{g}$ be an arbitrary element. By the Baker-Campbell-Hausdorff formula, we have
    \begin{equation}\label{eq:expansion}
    x^{\pm 1} \exp tY = \exp \pm X \exp tY =\exp\left( \pm X+t\frac{\ad_{\pm X}}{1-\re^{-\ad_{\pm X}}}Y+ O(t^2)\right).
    \end{equation}
     Let 
    \[
    \mathfrak{g} = \mathfrak{g}_0\oplus \bigoplus_{\beta\in \Pi} \mathfrak{g}_\beta
    \]
    be the restricted root space decomposition of the Lie algebra $\mathfrak{g}$ of $G$. Since $G$ is a split real form we have $\mathfrak{g}_0 = \mathfrak{a}$. Decompose $Y$ into the sum of restricted root vectors; $Y=Y_0 + \sum_{\beta\in \Pi} Y_\beta$. Since $X\in \mathfrak{a}^+$, we have $[X,Y_\beta]=\beta(X)Y_\beta$ for every restricted root $\beta\in \Pi$.  Hence, the argument of the rightmost exponential in (\ref{eq:expansion}) becomes
    \[
    \pm X+t Y_0+t\sum_{\beta\in \Pi}\frac{\beta(\pm X)}{1-\re^{-\beta(\pm X)}}Y_\beta+O(t^2).
    \]
    Let
    \[
    Z_{\pm}:=  Y_0+\sum_{\beta\in \Pi}\frac{\beta(\pm X)}{1-\re^{-\beta(\pm X)}}Y_\beta.
    \]
    For a small enough $t$, (\ref{eq:expansion}) remains purely loxodromic. Also by Lemma~\ref{lem:openness}, for all sufficiently small $t$, we can find an analytic path $g_t\in G$ such that $g_t (x\exp tY) g_t^{-1} \in \exp \mathfrak{a}^+$. In the case of $x^{-1}$, there is a longest element $w_0$ in the Weyl group $N_{\mathfrak{k}}(\mathfrak{a})/Z_{\mathfrak{k}}(\mathfrak{a})$ such that $g_t w_0 (x^{-1} \exp t Y) w_0 ^{-1} g_t^{-1} \in \exp \mathfrak{a}^+$. Since $t\mapsto g_t$ is analytic with $g_0=1$, we expand $g_t$ near $t=0$ and write $g_t=\exp(tQ+O(t^2))$ for some $Q\in \mathfrak{g}$. Then we have  
    \begin{align*}
g_t (x\exp tY) g_t^{-1}&= \exp\left( \Ad_{g_t}( X+ tZ_+ +O(t^2))\right)\\
    &= \exp \left(X+tZ_+ + t[Q,X]+ O(t^2) \right)
    \end{align*}
    and
\begin{align*}
g_t w_0 (x^{-1} \exp tY) w_0 ^{-1} g_t^{-1}&= \exp\left( \Ad_{g_t}\Ad_{w_0} (- X+ tZ_- +O(t^2))\right)\\
    &= \exp \left(- \Ad_{w_0} X+t\Ad_{w_0} Z_- - t[Q,\Ad_{w_0} X]+ O(t^2) \right).
    \end{align*}

    Recall that both $X$ and $-\Ad_{w_0} X$ are members of $\mathfrak{a}^+$. By the construction of $g_t$, the first order terms, 
    \[
    tZ_+ +t[Q,X],\quad \text{ and }\quad t\Ad_{w_0}Z_- -t[Q,\Ad_{w_0} X]
    \]
    should be contained in $\mathfrak{a}^+$. Since $X$ and $\Ad_{w_0} X$ belong to  $\mathfrak{a}$, both $[Q,X]$ and $[Q,\Ad_{w_0} X]$ do not have  $\mathfrak{g}_0$-factors in the restricted root space decomposition. This forces
\begin{align*}
[Q,X]&=-\sum_{\beta\in \Pi}\frac{\beta(X)}{1-\re^{-\beta(X)}}Y_\beta\\
[Q,\Ad_{w_0} X]&=\sum_{\beta\in \Pi}\frac{\beta(-X)}{1-\re^{-\beta(-X)}}\Ad_{w_0}Y_\beta.
\end{align*}

Therefore, we know that  
\begin{align*}
    \lambda (x \exp tY) &=  X+t Y_0 +O(t^2)\\
    \lambda (x^{-1} \exp tY) &= -\Ad_{w_0} X+t\Ad_{w_0} Y_0 +O(t^2)
\end{align*} and that
    \begin{align*}
       \left.\frac{\rd}{\rd t}  \right|_{t=0}f(x\exp tY)&= \alpha ( Y_0)\\
           \left.\frac{\rd}{\rd t}  \right|_{t=0}f(x^{-1}\exp tY)&= \alpha (\Ad_{w_0} Y_0).
    \end{align*}
    Since the decomposition $Y=Y_0 + \sum_{\beta\in \Pi} Y_\beta$ is orthogonal with respect to the positive definite pairing $\kf_\theta$, we get 
    \[
    \kf(\alpha^\vee, Y ) = -\kf( \theta \alpha^\vee, Y )=\kf_\theta ( \alpha^\vee, Y)=\kf_\theta(\alpha^\vee,Y_0)=\kf(\alpha^\vee,Y_0)= \alpha( Y_0).
    \]
    Therefore, $\widehat{f}(x) = \alpha^\vee$. Similarly, we have $\widehat{f}(x^{-1}) = w_0 \alpha^\vee$.
\end{proof}

\section{Lifting Goldman Flows}\label{sec:lifting}
In this section, we investigate how Goldman flows behavior under a finite covering.

Let $O$ be a closed orientable non-elementary 2-orbifold with negative Euler characteristic and let $f:U\to G$ be an invariant function. Let $x$ be an oriented essential simple closed curve in $O_{\mathrm{reg}}$. Through the natural map $\pi_1(O_{\mathrm{reg}})\to \pi_1 ^{\mathrm{orb}}(O)$, $x$ represents an infinite order element in $\pi_1^{\mathrm{orb}}(O)$.

Goldman \cite{Goldman86} constructed an explicit Hamiltonian flow, called a \emph{Goldman flow}, associated to a given Hamiltonian vector field of the form $\mathbb{X}_{f_x}$. In \cite{CJK20,choi2023}, a multi-curve generalization of Goldman flows using graph of groups was studied. As we need Goldman flows along a multi-curve, let us quickly recall them. One may refer to \cite{serre} for more details on graph of groups.

Let $\mathbf{x}=\{x_1, \cdots, x_k\}$ be a set of pairwise non-isotopic oriented essential simple closed curves in $O_{\mathrm{reg}}$. We also regard $\mathbf{x}$ as a collection of oriented closed curves in $O$ via the inclusion $O_{\mathrm{reg}}\to O$.   Let $O_1,\cdots, O_d$ be the completions of connected components of $O\setminus (\bigcup_{i=1} ^k |x_i|)$. We first construct a graph of groups $\mathcal{G}$.  As a directed graph, $\mathcal{G}$ has the vertex set $V(\mathcal{G})=\{v_1,v_2,\cdots, v_d\}$ where $v_i\in O_i$ are regular points in $O_i$. We join $v_i$ and $v_j$ (possibly $i=j$) by an edge $e_k$ if and only if $O_i$ and $O_j$ are glued along some $|x_k|$.  We orient $e_k$  so that it passes $|x_k|$ from the left-side of $|x_k|$ to the right-side of $|x_k|$. Note that we can distinguish the left-side and the right-side of $|x_k|$ since $x_k$ is oriented and $O$ is orientable. Assign to each vertex $v_i$ the group $\pi_1^{\mathrm{orb}}(O_i,v_i)$. For each edge $e_k$, we give the group $\langle x_k \rangle$. We regard $\langle x_k\rangle$ as a subgroup of $\pi_1^{\mathrm{orb}}(O_i,v_i)$ by conjugating $x_k$ with the arc $e_k\cap O_i$ if $v_i$ is the head of the edge $e_k$.

Choose a maximal tree $\mathcal{T}$ of $\mathcal{G}$ such that $v_1\in V(\mathcal{T})$. Then it is a standard fact that $\pi_1^{\mathrm{orb}}(O,v_1)$ is isomorphic to the fundamental group $\pi_1(\mathcal{G},\mathcal{T})$ of the graph of groups. Recall that $\pi_1(\mathcal{G},\mathcal{T})$ is generated by $\pi_1^{\mathrm{orb}}(O_1,v_1), \cdots, \pi_1^{\mathrm{orb}}(O_k,v_k)$ together with edges in $E_\mathcal{T}(\mathcal{G}):=E(\mathcal{G})\setminus E(\mathcal{T})$. The isomorphism $\mathfrak{j}:\pi_1(\mathcal{G},\mathcal{T})\to \pi_1^{\mathrm{orb}}(O,v_1)$  maps $\gamma\in \pi_1^{\mathrm{orb}}(O_i,v_i)\subset \pi_1(\mathcal{G},\mathcal{T})$ to the element $\mathfrak{j}(\gamma)=e \gamma \overline{e}$ where $e$ is a unique edge path in $\mathcal{T}$ joining $v_1$ and $v_i$ with its reversal $\overline{e}$. Also it maps $c\in E_\mathcal{T}(\mathcal{G})$ to the element $g c h$ where $g$ is an edge path in $\mathcal{T}$ from $v_1$ to the tail of $c$  and $h$ is an edge path in $\mathcal{T}$ from the head of $c$ to $v_1$. 

To define the Goldman flow, we use the following notations. For $e\in E(\mathcal{T})$, let $H(e)$ be the head segment of $e$ defined by the connected component of $\mathcal{T}\setminus e$ containing the head vertex of $e$. The tail segment $T(e)$ of $e$ is defined similarly. Now given $e_i\in E(\mathcal{T})$,  the Goldman flow $\mathcal{F}^{f, x_i}_{t_i}(\rho)$ along $x_i$ is defined for $\rho\in \overline{\Hit}_G(O)$ by the conjugacy class of representation $\pi_1^{\mathrm{orb}}(O)\to G$ induced from the map on the generators
\[
\gamma \mapsto \begin{cases}
    \rho(\mathfrak{j}(\gamma)) & \gamma\in \pi_1^{\mathrm{orb}}(O_j,v_j) \text{ and }v_j\in V(T(e_i))\\
    \exp (-t\widehat{f}(\rho(\mathfrak{j}(x_i))))\rho(\mathfrak{j}(\gamma))  \exp (t\widehat{f}(\rho(\mathfrak{j}(x_i))))& \gamma\in \pi_1^{\mathrm{orb}}(O_j,v_j)  \text{ and }v_j\in V(H(e_i)) \\
    \exp (-t\widehat{f}(\rho(\mathfrak{j}(x_i))))\rho(\mathfrak{j}(e))  \exp (t\widehat{f}(\rho(\mathfrak{j}(x_i))))&\gamma = e\in E_\mathcal{T}(\mathcal{G}) \text{ joining two vertices of } H(e_i)\\
    \rho(\mathfrak{j}(e)) & \gamma = e\in E_\mathcal{T}(\mathcal{G}) \text{ joining two vertices of } T(e_i)\\
     \exp( -t\widehat{f}(\rho(\mathfrak{j}((x_i))))  \rho(\mathfrak{j}(e)) & \gamma= e\in E_\mathcal{T}(\mathcal{G})\text{ directing from } H(e_i)\text{ to } T(e_i)\\
    \rho(\mathfrak{j}(e)) \exp (t\widehat{f}(\rho(\mathfrak{j}(x_i)))) & \gamma=e\in E_\mathcal{T}(\mathcal{G})\text{ directing from } T(e_i)\text{ to } H(e_i)
\end{cases}
\]
using the universal property of $\pi_1(\mathcal{G},\mathcal{T})$. When $e_i\in E_\mathcal{T}(\mathcal{G})$, we similarly define $\mathcal{F}^{f,x_i}_t(\rho)$ by the representation extending the map on the generators
\[
\gamma \mapsto \begin{cases}
    \rho(\mathfrak{j}(e_i)) \exp(-t\widehat{f}(\rho(\mathfrak{j}(x_i)))) &\text{ if }\gamma = e_i\\
        \rho(\mathfrak{j}(\gamma)) & \text{ otherwise } 
\end{cases}.
\]
We have defined a flow on $\overline{\Hit}_G(O)$.  The actual flow, also denoted by $\mathcal{F}^{f,x_i}_t$, on $\Hit_G(O)$ is given by $[\rho]\mapsto [\mathcal{F}^{f,x_i}_t(\rho')]$ where $\rho'$ is a representative of $[\rho]$. One can check that this is well-defined. 

Since $|x_i|$ are pairwise disjoint, $\mathcal{F}^{f,x_i}_{t_i}$ and $\mathcal{F}^{f,x_j}_{t_j}$ commute whenever $i\ne j$. Therefore, for $\mathbf{t}=(t_1,\cdots, t_k)$, the $k$-parameter family of representations, $\mathcal{F}^{f,\mathbf{x}}_{\mathbf{t}}([\rho]) := \mathcal{F}^{f,x_1}_{t_1} \circ \cdots \circ \mathcal{F}^{f,x_k}_{t_k}([\rho])$ is well-defined and gives a smooth flow on $\Hit_G(O)$. We call $\mathcal{F}^{f,\mathbf{x}}_{\mathbf{t}}$ the \emph{Goldman flow} along $\mathbf{x}$. Indeed it is the Hamiltonian flow associated to the Hamiltonian vector field $\mathbb{X}_{f_{x_1}}+\cdots +\mathbb{X}_{f_{x_k}}$.

Now we state the main result of this section. Let $O$ be an orientable closed non-elementary 2-orbifold with negative Euler characteristic and let $x:\bS^1\to O_{\mathrm{reg}}$ be an oriented essential regular simple closed curve. Let $q: \widetilde{O}\to O$ be a finite normal covering. The covering map $q$ induces the inclusion $q_*:\pi_1^{\mathrm{orb}}(\widetilde{O}, \widetilde{p}) \to \pi_1^{\mathrm{orb}}(O,q(\widetilde{p}))$ as well as the smooth map, denoted by $q_!:\Hit_G(O) \to \Hit_G(\widetilde{O})$, defined by  $[\rho] \mapsto [\rho\circ q_*]$. Let $c_1, \cdots, c_d$ be the connected components of $q^{-1}(|x|)$. Since $x$ is regular, $c_1,\cdots,c_d$ are mutually non-isotopic. Let $\Delta$ be the deck group of $\widetilde{O}\to O$ and let $\operatorname{Stab}(c_1)$ be the stabilizer of $c_1$ in $\Delta$. Write $m:=|\operatorname{Stab}(\widetilde{x})|$. Since $q|c_i:c_i\to |x|$ is a $m$-fold covering, we find a parametrization $\widetilde{x}_i:\bS^1 \to |\widetilde{O}|$ of $c_i$ such that $q\circ \widetilde{x}_i(t) = x(t^m)$, where we regard $\bS^1$ as the set of unit complex numbers in $\mathbb{C}$.

\begin{theorem}\label{prop:lifting}
In the above situation, we have 
    \begin{equation}\label{eq:multitwist}
    q_!(\mathcal{F}_t ^{f,x}([\rho])) = \mathcal{F}_{t} ^{f, \widetilde{x}_1}\circ \cdots \circ \mathcal{F}_{t} ^{f,\widetilde{x}_d}(q_!([\rho])).
    \end{equation}
\end{theorem}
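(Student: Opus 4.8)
The statement is essentially a bookkeeping identity comparing two explicitly defined flows on representation varieties, so the strategy is to compute the right-hand side directly on generators and match it with the left-hand side, using the combinatorics of the covering. First I would fix a $\Delta$-equivariant picture: since $\widetilde{O}\to O$ is a finite normal covering with deck group $\Delta$, the preimage $q^{-1}(|x|)$ is the disjoint union of the geodesics $c_1,\dots,c_d$, the group $\Delta$ permutes them transitively, and $\operatorname{Stab}(c_1)$ has order $m$ with $d\cdot m = |\Delta|$. Cutting $O$ along $|x|$ and cutting $\widetilde{O}$ along all the $c_i$'s produces compatible graph-of-groups decompositions $\mathcal{G}$ (for $O$, one edge $e$ labelled $\langle x\rangle$) and $\widetilde{\mathcal{G}}$ (for $\widetilde{O}$, edges $\widetilde{e}_1,\dots,\widetilde{e}_d$ labelled $\langle \widetilde{x}_i\rangle$), and $q_*$ sends the pieces $\pi_1^{\mathrm{orb}}(\widetilde{O}_j)$ into the pieces $\pi_1^{\mathrm{orb}}(O_\ell)$ compatibly with the edge inclusions. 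The key elementary fact to pin down here is that $q_*(\widetilde{x}_i) $ is conjugate in $\pi_1^{\mathrm{orb}}(O)$ to $x^m$ (this is exactly the content of $q\circ\widetilde{x}_i(t)=x(t^m)$), so that the Goldman vector $\widehat{f}(\rho\circ q_*(\widetilde{x}_i))$ equals $\operatorname{Ad}_{g_i}\widehat{f}(\rho(x^m)) = m\operatorname{Ad}_{g_i}\widehat{f}(\rho(x))$ for a suitable $g_i$, by Lemma~\ref{lem:conjugation}, homogeneity (Lemma~\ref{lem:goldmanfunctioncompute}(ii)), and the fact that $\widehat{f}$ of a power scales by the exponent — wait, one must be careful: homogeneity of $f$ gives $f(x^m)=mf(x)$ hence $\widehat{f}(x^m)=m\widehat{f}(x)$ only after checking $x$ and $x^m$ lie in the same Cartan, which they do. Actually the cleaner route is to use $q\circ\widetilde x_i(t)=x(t^m)$ at the level of the parametrized curves so that the ``twist parameter'' bookkeeping below automatically absorbs the factor $m$.

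Second, I would evaluate both sides of (\ref{eq:multitwist}) on a generating set for $\pi_1^{\mathrm{orb}}(\widetilde{O})$ adapted to $\widetilde{\mathcal{G}}$: elements of the vertex groups $\pi_1^{\mathrm{orb}}(\widetilde{O}_j)$ and the free edges in $E_{\widetilde{\mathcal{T}}}(\widetilde{\mathcal{G}})$, where $\widetilde{\mathcal{T}}$ is a maximal tree lifting a chosen maximal tree $\mathcal{T}$ of $\mathcal{G}$ (so $q$-lifts of tree edges are tree edges, and the free edges of $\widetilde{\mathcal{G}}$ are the $q$-preimages of the free edges of $\mathcal{G}$ together with the extra copies of the $\widetilde e_i$). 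On the left, $q_!(\mathcal{F}^{f,x}_t[\rho])$ is $[\mathcal{F}^{f,x}_t(\rho)\circ q_*]$, and the flow $\mathcal{F}^{f,x}_t$ conjugates $\rho$ on the head-side pieces of $e$ by $\exp(\pm t\widehat{f}(\rho(x)))$; pulling back by $q_*$ distributes these conjugations over the pieces $\widetilde{O}_j$ according to which side of which $c_i$ they lie on, giving conjugation by $\exp(\pm t\operatorname{Ad}_{g_i}\widehat{f}(\rho(x)))=\exp(\pm\tfrac{t}{m}\widehat f(\rho\circ q_*(\widetilde x_i)))$. On the right, the composition $\mathcal{F}^{f,\widetilde{x}_1}_t\circ\cdots\circ\mathcal{F}^{f,\widetilde{x}_d}_t$ — which makes sense because the $c_i$ are disjoint so the individual flows commute — conjugates the piece attached on the head side of $\widetilde{e}_i$ by $\exp(\pm t\widehat{f}(q_!\rho(\widetilde x_i)))$. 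The two prescriptions agree once one checks that the head/tail segment structure of $\widetilde{\mathcal{T}}$ relative to each $\widetilde{e}_i$ is exactly the $q$-pullback of that of $\mathcal{T}$ relative to $e$, which is where the specific parametrization $q\circ\widetilde x_i(t)=x(t^m)$ (rather than $x(t)$) is used to reconcile the factor of $m$ in the Goldman vectors with the single time parameter $t$ on both sides; this is the reason the theorem has $t$ and not $mt$ on the right.

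Third, I would handle the consistency subtleties: (a) independence of the choice of maximal tree and basepoints, which is routine since Goldman flows on $\Hit_G$ (as opposed to $\overline{\Hit}_G$) are defined only up to overall conjugation and the constructions in Section~\ref{sec:lifting} are already shown to be well-defined there; (b) the free edges that loop within one side — these get conjugated or not exactly as dictated by the case list defining $\mathcal{F}^{f,x_i}_t$, and the $q$-lift of such an edge in $\mathcal{G}$ becomes an edge with the same head/tail-side relationship to the corresponding $\widetilde e_i$; (c) the free edges $\widetilde e_i$ themselves, for which the last two cases of the edge-in-$E_{\widetilde{\mathcal{T}}}(\widetilde{\mathcal{G}})$ formula apply and must be matched against how $q_*$ sends the corresponding loop in $O$.

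\textbf{Main obstacle.} The genuinely delicate point is the edge-and-tree combinatorics: one must set up the maximal trees $\mathcal{T}\subset\mathcal{G}$ and $\widetilde{\mathcal{T}}\subset\widetilde{\mathcal{G}}$ compatibly so that ``head side of $\widetilde e_i$'' pulls back correctly, and then verify \emph{on every generator} that the factor $\exp(\pm t\widehat f(\rho\circ q_*(\widetilde x_i)))$ inserted by the right-hand side agrees with the factor $\exp(\pm t\operatorname{Ad}_{g_i}\widehat f(\rho(x)))$ produced by pulling back the single twist on $O$ — including getting the signs (left vs.\ right side of $|x|$, direction of free edges) right and confirming the single time $t$ is correct thanks to the $t\mapsto t^m$ reparametrization. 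I expect this to be several pages of careful but unsurprising case analysis, with no conceptual surprise beyond the ones already localized above.
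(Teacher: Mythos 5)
Your overall strategy—cutting $O$ along $|x|$ and $\widetilde{O}$ along the $c_i$, forming compatible graph-of-groups decompositions, and matching the two flows generator by generator—is the same as the paper's. But there is a concrete error at the central bookkeeping step, and the device you invent to repair it does not work. You assert that $\widehat{f}(\rho\circ q_*(\widetilde{x}_i))=\Ad_{g_i}\widehat{f}(\rho(x^m))=m\,\Ad_{g_i}\widehat{f}(\rho(x))$ and then organize the rest of the argument around cancelling this factor $m$ via the parametrization $q\circ\widetilde{x}_i(t)=x(t^m)$, even claiming this is ``the reason the theorem has $t$ and not $mt$ on the right.'' The second equality is false for the functions in play: homogeneity $f(z^k)=kf(z)$ does not make the Goldman vector scale. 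For $f=\alpha\circ\lambda$, Lemma~\ref{lem:goldmanfunctioncompute}(iii) gives $\widehat{f}\equiv\alpha^\vee$ on $\exp\mathfrak{a}^+$, so together with Lemma~\ref{lem:conjugation} one gets $\widehat{f}(z^m)=\widehat{f}(z)$ for every purely loxodromic $z$. Hence there is no factor of $m$ to absorb: after applying $q_!$, the right-hand side of (\ref{eq:multitwist}) inserts exactly $\exp\bigl(\pm t\,\Ad_{g_i}\widehat{f}(\rho(x_v))\bigr)$, which is precisely what pulling back the single twist on $O$ produces; this invariance of the Goldman vector under taking powers, not the reparametrization of the circle, is why the same $t$ appears on both sides. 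Moreover, the proposed fix cannot work even in principle: the Goldman flow depends only on the splitting and on $\widehat{f}(\rho(\mathfrak{j}(\widetilde{x}_i)))$, i.e.\ on the (free homotopy class of the) curve, not on how $\bS^1$ is parametrized, so if the vector really scaled by $m$ the correct identity would carry time $t/m$ on the right and no reparametrization could repair it. Carrying out your computation as written would yield that wrong identity.

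A secondary issue is the tree combinatorics, which you correctly flag as the main obstacle but then dispose of by asserting that a maximal tree $\widetilde{\mathcal{T}}$ can be chosen ``lifting'' $\mathcal{T}$ with the head/tail structure pulled back from $O$. Here $\mathcal{G}$ has a single edge (a loop for $x$ non-separating, a genuine edge for $x$ separating), and preimages of trees are not trees; in any maximal tree of the lifted graph some edges unavoidably point the ``wrong way,'' while the definition of the flow treats head and tail sides asymmetrically. This is exactly what the paper handles with the maximal rooted tree claim in the non-separating case and with the dual flows attached to the partition $E_\pm(\mathcal{T})$ (legitimate because they differ from the original flows by global conjugation, hence agree on $\Hit_G$) in the separating case. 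Some such device is needed; the pullback compatibility you assume is not available in general.
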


The main part of the proof is to construct a nice graph of groups for $\pi_1^{\mathrm{orb}}(\widetilde{O})$. Once we have done this, we just carefully read off the definition of the Goldman flow to show that the actions of both sides of (\ref{eq:multitwist}) agree on generators of $\pi_1^{\mathrm{orb}}(\widetilde{O})$. The proof is, by nature, tedious and repetitive. We split the proof into two cases depending on whether $x$ is separating or not, and omit some repeating computations. 

\begin{proof}[Proof of Theorem~\ref{prop:lifting}: Non-Separating case]
Assume that $x$ is non-separating. We know that the connected components of $q^{-1}(|x|)$ are $|\widetilde{x}_1|,\cdots, |\widetilde{x}_d|$. Let $\widetilde{O}_1$, $\widetilde{O}_2,\cdots$, $\widetilde{O}_n$ be the connected components of $\widetilde{O}\setminus q^{-1}(|x|)$. Each $\widetilde{O}_i$ is a covering space over $O_0:=O\setminus |x|$. 

We will construct a particular graph of groups for $\pi_1^{\mathrm{orb}}(\widetilde{O})$ that is compatible with the covering structure. We first define an underlying graph of $\mathcal{G}$ embedded in $\widetilde{O}$. Fix a base-point $v\in O\setminus |x|$ and an oriented loop $c$ based at $v$ with the algebraic intersection $\ri(c,x)=1$ so that $\pi_1^{\mathrm{orb}}(O,v)$ is decomposed into the HNN-extension $\pi_1^{\mathrm{orb}}(O_0,v)*_c$. We choose a regular point $v_i\in \widetilde{O}_i$ for each $i\in\{1,2,\cdots,n\}$ such that $q(v_i) = v$. They are vertices for $\mathcal{G}$. Suppose that $\widetilde{O}_i$ and $\widetilde{O}_j$ share a boundary $|\widetilde{x}_k|$ for some $k$ and $\widetilde{O}_i$ is on the left-side of $|\widetilde{x}_k|$. Then there is a unique oriented arc $e_k$ from $v_i$ to $v_j$ such that $q(e_k)=\eta_ k c \xi_k$ for some $\eta_k,\xi_k \in \pi_1^{\mathrm{orb}}(O_0)$. We join $v_i$ and $v_j$ by using this $e_k$ as the oriented edge. We give the vertex group $\pi_1^{\mathrm{orb}}(\widetilde{O}_i,v_i)$ to $v_i$ and the edge group $\langle \widetilde{x}_k\rangle$ to an edge $e_k$.   We implicitly understand $\widetilde{x}_k$ as the element $\overline{\alpha} \widetilde{x}_k \alpha$ in $\pi_1^{\mathrm{orb}}(\widetilde{O}_i,v_i)$, where $v_i$ is the head of $e_k$ and where $\alpha$ denotes the oriented arc $e_k\cap \widetilde{O}_i$. 
\begin{claim}\label{claim:maximaltree}
 The underlying directed graph of $\mathcal{G}$ admits a maximal \emph{rooted} tree, namely a maximal tree $\mathcal{T}$ such that each vertex of $\mathcal{T}$ has at most one inbound edge.   
\end{claim}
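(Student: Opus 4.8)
\textbf{Proof proposal for Claim~\ref{claim:maximaltree}.}

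The plan is to build the required maximal rooted tree by exploiting the action of the deck group $\Delta$ of $q$ on the directed graph $\mathcal{G}$. The crux is that this action is by \emph{directed}-graph automorphisms and is \emph{transitive on the vertex set}: a finite weakly connected directed graph whose automorphism group is transitive on vertices is strongly connected, and a finite strongly connected directed graph admits a spanning tree all of whose edges point away from any prescribed vertex — in our case away from $v_1$ — which is exactly a maximal rooted tree with root $v_1$.

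Concretely I would proceed in four steps. \emph{Step 1: $\Delta$ acts on $\mathcal G$ as a directed graph.} Each $\delta\in\Delta$ satisfies $q\circ\delta = q$ and is orientation preserving; hence it permutes the curves $c_1,\dots,c_d$ respecting the orientations they inherit from $x$, permutes the regions $\widetilde O_1,\dots,\widetilde O_n$, and, being orientation preserving, respects the left and right sides of each $c_k$. Therefore $\delta$ carries each oriented edge $e_k$ to an oriented edge of $\mathcal G$, matching tails with tails and heads with heads. \emph{Step 2: $\Delta$ is transitive on $V(\mathcal G)=\{\widetilde O_1,\dots,\widetilde O_n\}$.} Since $x$ is non-separating, $O_0=O\setminus|x|$ is connected, so each $\widetilde O_i\to O_0$ is a surjective covering map; fixing a regular point $p\in O_0$, the fibre $q^{-1}(p)$ thus meets every region, while normality of $q$ makes $\Delta$ transitive on $q^{-1}(p)$. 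Hence for any $i,j$ there is $\delta\in\Delta$ with $\delta(\widetilde O_i)=\widetilde O_j$. \emph{Step 3: $\mathcal G$ is strongly connected.} By Step 2 the $\Delta$-orbit of any strongly connected component of $\mathcal G$ already covers $V(\mathcal G)$, so $\Delta$ permutes the strongly connected components transitively. Because $\widetilde O$ is connected, $\mathcal G$ is connected; were there two or more strongly connected components, the condensation would be a weakly connected DAG on at least two vertices and would therefore have a source component $C$ with an outgoing edge to some $C'\neq C$, so $C'$ is not a source — but some $\delta\in\Delta$ maps $C$ to $C'$ and induces an automorphism of the condensation, forcing $C'$ to be a source too, a contradiction. \emph{Step 4: extract $\mathcal T$.} Running a breadth-first search from $v_1$ along directed edges reaches every vertex (by strong connectivity), and the edges by which vertices are first reached form a spanning tree $\mathcal T$ in which $v_1$ has no inbound edge and every other vertex has exactly one — the desired maximal rooted tree. (When $n=1$ the claim is vacuous, $\mathcal T$ being the single vertex $v_1$.)

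I expect Step 2 to be the main obstacle — or rather the step carrying all the content — since it is there that the normality of the covering is used (to get transitivity of $\Delta$ on a fibre) and that the non-separating hypothesis is used (to get connectedness of $O_0$, hence surjectivity of $\widetilde O_i\to O_0$, so that a single fibre over $O_0$ already meets every region). The other point requiring some care is the orientation bookkeeping of Step 1, which depends on $O$, and hence $\widetilde O$, being orientable and on the $c_k$ carrying orientations lifted from $x$; granting this, Steps 3 and 4 are routine finite-graph combinatorics.
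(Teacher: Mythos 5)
Your proof is correct, but it runs along a genuinely different line from the paper's. The paper's argument is purely combinatorial: it observes that, by the construction of $\mathcal{G}$, every vertex has equal numbers of inbound and outbound edges (each region $\widetilde{O}_i$ covers $O_0$, and its boundary curves split evenly between the left and right copies of $|x|$ because, the covering being normal, all components of $q^{-1}(|x|)$ cover $|x|$ with the same degree), and it then enlarges an arbitrary rooted tree step by step, using a longest directed trail to manufacture a new inbound edge whenever the naive extension fails. You instead exploit the deck-group symmetry: normality gives transitivity of $\Delta$ on a fibre over a regular point of $O_0$, the non-separating hypothesis makes $O_0$ connected so that this fibre meets every region, hence $\Delta$ is vertex-transitive on $\mathcal{G}$ and acts by orientation-respecting automorphisms; vertex-transitivity forces strong connectivity (your condensation/source argument is fine; one could equally note that transitivity makes all in- and out-degrees equal, recovering the paper's balanced-degree condition), and a directed BFS from $v_1$ then produces the spanning arborescence. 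Both routes ultimately lean on the same covering data, but yours buys a slightly stronger output --- an arborescence rooted at the prescribed vertex $v_1$, with \emph{exactly} one inbound edge at every other vertex, so no renumbering of the root is needed --- and it makes transparent why the claim breaks in the separating case (all edges point from $L$-regions to $R$-regions, so transitivity and strong connectivity both fail), whereas the paper's proof is more elementary in that it needs only the local degree balance and no group action. The only points needing the care you already flagged are that deck transformations preserve the pulled-back orientation of $\widetilde{O}$ and carry each oriented lift $\widetilde{x}_k$ to another such lift (up to rotation of the parameter), so that heads and tails of edges are indeed respected.
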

\begin{proof}
    Take any rooted tree $\mathcal{T}_0$ of $\mathcal{G}$. If $\mathcal{T}_0$ is not maximal, there is a vertex $w$ not in $V(\mathcal{T}_0)$ but has an edge $e_w$ to a vertex in $\mathcal{T}_0$. If this edge is inbound to $w$, we just add $w$ and $e_w$ enlarging $\mathcal{T}_0$ to a bigger rooted tree. Hence assume that the edge $e_w$ is outbound from $w$. By the construction of $\mathcal{G}$, the numbers of inbound and outbound edges are equal at each vertex. Therefore, we can find a longest directed trail terminating at $w$ and, since there are only finitely many vertices and edges, this trail must pass through a vertex in $\mathcal{T}_0$. Hence we find another vertex $w'$ and an edge $e_{w'}$ from some vertex of $\mathcal{T}_0$ into $w'$. Therefore, we can enlarge $\mathcal{T}_0$ by adding $w'$ and $e_{w'}$ as we did in the above case. By repeating this procedure, we obtain a maximal rooted tree $\mathcal{T}$.
\end{proof}
 
By Claim~\ref{claim:maximaltree}, we may find a maximal rooted tree $\mathcal{T}$ of $\mathcal{G}$. By renumbering if necessary, we assume that  $v_1\in V(\mathcal{T})$ is the root. 

As mentioned above, $\pi_1(\mathcal{G},\mathcal{T})$ is generated by $\pi_1^{\mathrm{orb}}(\widetilde{O}_1,v_1), \cdots, \pi_1^{\mathrm{orb}}(\widetilde{O}_n,v_n)$ together with edges $E_\mathcal{T}(\mathcal{G})$. Let $\mathfrak{j}:\pi_1(\mathcal{G},\mathcal{T})\to \pi_1^{\mathrm{orb}}(\widetilde{O},v_1)$ be the isomorphism. As described above, $\mathfrak{j}$ maps $z\in \pi_1^{\mathrm{orb}}(\widetilde{O}_i, v_i)$ to $e z \overline{e}$ where $e$ is a unique edge path from $v_1$ to $v_i$. 

We will see both sides of (\ref{eq:multitwist}) act identically on these generators. For instance let $z \in \pi_1^{\mathrm{orb}}(\widetilde{O}_i,v_i)$ for some $i$. Since $v_\ell \in V(\mathcal{T})$, there is a unique directed path, say,  $e:=e_1 e_2 \cdots e_\ell$ in $\mathcal{T}$ from  $v_1$ to $v_i$.  We know that $q_*(\mathfrak{j}(z)) = q(e) q(z) q(\overline{e})$. Moreover, $q(e_i) = \eta _i c \xi_i$ for some $\eta_i ,\xi_i \in \pi_1 ^{\mathrm{orb}}(O_0)$.  Following the definition of the Goldman flow, we know that $\mathcal{F}^{f,x}_t (\rho)(q_*(\mathfrak{j}(z)))$ is the conjugation of $\rho(q(z))$ by the element
\begin{multline*}
\rho(\eta_1) \rho (c) \exp \left(-t \widehat{f}(\rho(x_v))\right) \rho(\xi_1) \rho(\eta_2) \rho (c) \exp \left(-t \widehat{f}(\rho(x_v))\right) \rho(\xi_2)\cdots \\
\cdots \rho(\eta_\ell ) \rho (c) \exp \left(-t \widehat{f}(\rho(x_v))\right) \rho (\xi_\ell).
\end{multline*}
Here $x_v$ is defined as follow. Take a sub-arc $\gamma$ of $c$ joining the point $c\cap x$ and $v$. Then we define $x_v := \overline{\gamma} x \gamma$ and regard it as an element of  $\pi_1^{\mathrm{orb}}(O,v)$.  

Now we investigate how the right-hand-side of (\ref{eq:multitwist}) acts on $z$. Since $\mathcal{T}$ is rooted, we only need to compute,
\begin{equation}\label{eq:RHS}
\mathcal{F}_{t} ^{f, \widetilde{x}_1}\circ \mathcal{F}_{t} ^{f, \widetilde{x}_2}\cdots \circ \mathcal{F}_{t} ^{f,\widetilde{x}_\ell}(\rho\circ q_*)(\mathfrak{j}(z)),
\end{equation}
where (by renumbering if necessary) the curve $|\widetilde{x}_i|$ is the one that the edge $e_i$ crosses. Observe that for $r\in\{1,2,\cdots, \ell\}$,
\begin{align*}
q_*(\mathfrak{j}(\widetilde{x}_r)) &=q_*(e_1e_2\cdots e_r \overline{\alpha} \widetilde{x}_r\alpha \overline{e}_r \cdots \overline{e}_1 )\\
&= (q(e_1)\cdots q(e_{r-1})\eta_r c) x_v ^m ( q(e_1)\cdots q(e_{r-1})\eta_r c)^{-1}.
\end{align*}
Remember that we are interpreting $\widetilde{x}_r$ as the element $\overline{\alpha} \widetilde{x}_r \alpha$ in $\pi_1^{\mathrm{orb}}(\widetilde{O}_j,v_j)$, where $v_j$ is the head of $e_r$ and where $\alpha$ denotes the oriented arc $e_r\cap \widetilde{O}_j$. Hence,
\begin{multline}\label{eq:exp1}
\exp \left(-t \widehat{f}(\rho \circ q_*(\mathfrak{j}(\widetilde{x}_r)))\right)=\\
\rho(q(e_1)\cdots  q(e_{r-1})\eta_r c)\exp \left(-t \widehat{f}(\rho(x_v))\right) \rho(q(e_1)\cdots  q(e_{r-1})\eta_r c)^{-1}.
\end{multline}
By following the definition of the Goldman flow, we know that $(\ref{eq:RHS})$ is the conjugation of $\rho(q(z))$ by the element
\[
\exp \left(-t \widehat{f}(\rho \circ q_*(\mathfrak{j}(\widetilde{x}_1)))\right)\cdots \exp \left(-t \widehat{f}(\rho \circ q_*(\mathfrak{j}(\widetilde{x}_\ell)))\right).
\]
Combining (\ref{eq:exp1}) and 
\[
\rho(q_*(z_{v_i}))= \rho(q(e_1) \cdots q(e_\ell)) \rho(z_v) \rho(q(e_1) \cdots q(e_\ell))^{-1},
\]
we know that (\ref{eq:RHS}) is the same as $q_!(\mathcal{F}^{f,x}(\rho))(z_{v_i})$.

For a generator $e\in E_\mathcal{T}(\mathcal{G})$, a similar computation yields $q_!(\mathcal{F}^{f,x}(\rho))(\mathfrak{j}(e))=\mathcal{F}_{t} ^{f, \widetilde{x}_1}\circ \cdots \circ \mathcal{F}_{t} ^{f,\widetilde{x}_\ell}(\rho\circ q_*)(\mathfrak{j}(e))$.  \end{proof}

\begin{proof}[Proof of Theorem~\ref{prop:lifting}: Separating case]
We construct graph of groups for $\pi_1^{\mathrm{orb}}(\widetilde{O})$ as we did in the non-separating case. Denote by $L$ and $R$ the suborbifolds of $O$ in the left- and the right-side of $x$ respectively. First we take regular base points $v$ and $w$ on $L$ and $R$ respectively and choose an oriented simple path $c$ from $v$ to $w$ that crosses $x$ transversely once. In the covering $\{|\widetilde{x}_1|, \cdots, |\widetilde{x}_d|\}$ is the set of connected components of $q^{-1}(x)$ and $\widetilde{O}\setminus q^{-1}(x)=\widetilde{L}_1\cup\cdots \cup \widetilde{L}_{n_L}\cup \widetilde{R}_1\cup\cdots \cup \widetilde{R}_{n_R}$ where $\widetilde{L}_i$ are covering over $L$ and $\widetilde{R}_i$ are covering over $R$. We let $V(\mathcal{G})= \{v_1, \cdots, v_{n_L}, w_1, \cdots, w_{n_R}\}$  be the vertex set where $v_i\in \widetilde{L}_i$ and $w_i\in \widetilde{R}_i$. We join an oriented arc $e_k$ from $v_i$ to $w_j$ if $\widetilde{L}_i$ and $\widetilde{R}_j$ share a boundary $|\widetilde{x}_k|$. The arc $e_k$ requires $q(e_k) = \eta c\xi$ for some $\eta\in \pi_1(L,v)$ and $\xi\in \pi_1 ^{\mathrm{orb}}(R,w)$. The vertex groups and edge groups are defined in the obvious manner.

Choose any maximal tree $\mathcal{T}$ with $v_1\in V(\mathcal{T})$ and let $\mathfrak{j}:\pi_1(\mathcal{G},\mathcal{T})\to \pi_1^{\mathrm{orb}}(\widetilde{O},v_1)$ be the isomorphism. Recall that for $z\in \pi_1 ^{\mathrm{orb}}(\widetilde{L}_i,v_i)$, $\mathfrak{j}(z) = e z_v \overline{e}$, where $e$ is a path in $\mathcal{T}$ joining $v_1$ and $w_i$ and similarly for $v\in \pi_1^{\mathrm{orb}} (\widetilde{R}_i,w_i)$. 

When $x$ is separating, we do not have Claim~\ref{claim:maximaltree} which significantly slashed the amount of computation. Instead, we do some trick to detour this issue. 

Partition the edge set $E(\mathcal{T})$ into sets of edges 
\begin{align*}
    E_-(\mathcal{T})&:=\{e\in E(\mathcal{T})\,|\,e\text{ pointing toward }v_1\},\\
    E_+(\mathcal{T})&:=\{e\in E(\mathcal{T})\,|\,e\text{ pointing away from }v_1\},\text{ and }\\
    E_\mathcal{T}(\mathcal{G}) &:= E(\mathcal{G})\setminus E(\mathcal{T}).
\end{align*}
Define the dual flow for $\widetilde{x}_i$ intersecting an edge $e_i\in E_-(\mathcal{T})$ 
\[
\mathcal{F}^{f,-\widetilde{x}_i}_t(\rho):= \exp\left( t\widehat{f}(\rho\circ q_*(\mathfrak{j}(\widetilde{x}_i)))\right) \mathcal{F}^{f,\widetilde{x}_i}_t(\rho) \exp \left(-t\widehat{f}(\rho\circ q_*(\mathfrak{j}(\widetilde{x}_i)))\right)
\]
Since $\mathcal{F}^{f,-\widetilde{x}_i}_t$ and $\mathcal{F}^{f,\widetilde{x}_i}_t$  only differ by global conjugation, they behave identically on $\Hit_G(O)$. Thus we may instead compare the left-hand side of $(\ref{eq:multitwist})$ with
\[
\mathcal{F}^{f,\epsilon_1 \widetilde{x}_1}_{t}\circ \mathcal{F}^{f,\epsilon_2 \widetilde{x}_{2}}_{t} \circ \cdots \circ \mathcal{F}^{f,\epsilon_d \widetilde{x}_d}_{t}(q_! ([\rho]) )
\]
where $\epsilon_i=-1$ if the corresponding edge $e_i$ of $\widetilde{x}_i$ belongs to $E_-(\mathcal{T})$ and $\epsilon_i=1$ otherwise.

Let  $z\in \pi_1 ^{\mathrm{orb}}(L_i, v_i)$ be a generator of $\pi_1(\mathcal{G},\mathcal{T})$.  We have a unique edge path, say, $e_1 e_2 \cdots e_\ell$ in $\mathcal{T}$ from $v_1$ to $v_i$. Since $q(e_j)= \eta_j c \xi_j$ for some $\eta_j\in \pi_1^{\mathrm{orb}}(L,v)$ and $\xi_j\in \pi_1^{\mathrm{orb}}(R,w)$, one can write
\[
q_*(\mathfrak{j}(z))= (\eta_1 \xi_1 \eta_2 \xi_2 \cdots \eta_\ell \xi_\ell) q(z) (\eta_1 \xi_1 \eta_2 \xi_2 \cdots \eta_\ell \xi_\ell)^{-1}.
\]
Here, $\xi_i$ should be interpreted as $c\xi_i \overline{c}\in \pi_1^{\mathrm{orb}}(O,v)$. Therefore, $\mathcal{F}^{f,x}_t(\rho\circ q_*) (\mathfrak{j}(z))$ is the conjugate of $\rho(q(z))$ by the element
\begin{multline*}
\rho(\eta_1) \exp\left(-t\widehat{f}(\rho(x_v))\right) \rho(\xi_1)\exp\left(t\widehat{f}(\rho(x_v))\right)
\rho(\eta_2) \exp\left(-t\widehat{f}(\rho(x_v))\right) \rho(\xi_2)\exp\left(t\widehat{f}(\rho(x_v))\right)\cdots \\
\cdots \rho(\eta_\ell) \exp\left(-t\widehat{f}(\rho(x_v))\right) \rho(\xi_\ell)\exp\left(t\widehat{f}(\rho(x_v))\right).
\end{multline*}

To compute the right-hand side of (\ref{eq:multitwist}) evaluated at $z$, we observe that  $e_1,e_2,\cdots, e_\ell$ alternate $E_+(\mathcal{T})$ and $E_-(\mathcal{T})$. Therefore, it suffices to evaluate
\begin{equation}\label{eq:comp}
    \mathcal{F}^{f,\widetilde{x}_1}_t \circ \mathcal{F}^{f,-\widetilde{x}_2}_t \circ \cdots \circ \mathcal{F}^{f,\widetilde{x}_{\ell-1}}_t \circ \mathcal{F}^{f,-\widetilde{x}_\ell}_t (\rho\circ q_*) (\mathfrak{j}(z)).
\end{equation}
Because
\[
q_*(\mathfrak{j}(\widetilde{x}_j)) = (\eta_1 \xi_1 \cdots \xi_{j-1}\eta_j) x_v ^m(\eta_1 \xi_1 \cdots \xi_{j-1}\eta_j)^{-1}
\]
for $j\in \{1,2,\cdots, \ell\}$, we have
\begin{equation}\label{eq:exp}
\exp\left(\pm t \widehat{f}(\rho\circ q_*(\mathfrak{j}(\widetilde{x}_j)))\right) = \rho(\eta_1 \xi_1 \cdots \xi_{j-1}\eta_j) \exp \left( \pm t \widehat{f}(\rho(x_v))\right) \rho(\eta_1 \xi_1 \cdots \xi_{j-1}\eta_j)^{-1}. 
\end{equation}
Also we know that (\ref{eq:comp}) is the conjugation of
\[
\rho(\eta_1 \xi_1 \cdots \eta_\ell \xi_\ell) \rho(q(z)) \rho(\eta_1 \xi_1 \cdots \eta_\ell \xi_\ell)^{-1}
\]
by the element
\begin{multline*}
\exp\left( -t\widehat{f}(\rho\circ q_*(\mathfrak{j}(\widetilde{x}_1)))\right)\exp\left(t\widehat{f}(\rho\circ q_*(\mathfrak{j}(\widetilde{x}_2)))\right)\exp\left( -t\widehat{f}(\rho\circ q_*(\mathfrak{j}(\widetilde{x}_3)))\right)\cdots \\
\cdots\exp\left(-t\widehat{f}(\rho\circ q_*(\mathfrak{j}(\widetilde{x}_{\ell-1})))\right) \exp\left(t\widehat{f}(\rho\circ q_*(\mathfrak{j}(\widetilde{x}_\ell)))\right).
\end{multline*}
One can compute this element using (\ref{eq:exp}) to show that (\ref{eq:comp}) agrees with $\mathcal{F}^{f,x}_t(\rho\circ q_*) (\mathfrak{j}(z))$. 

Evaluations for generators $z\in \pi_1 ^{\mathrm{orb}}(R_i,w_i)$ and $e\in E_{\mathcal{T}}( \mathcal{G})$ are similar hence omitted.
\end{proof}

\section{Proofs of Main Theorems}\label{sec:proof}

By bringing results from the previous sections together, we can show our main theorem. First, we observe that Theorem~\ref{thm:goldmanproduct} can be generalized to an orbifold setting. Recall that we call $x\in \pi_1 ^{\mathrm{orb}}(O)$ \emph{regular} if its normalizer is infinite cyclic generated by $x$ itself.
\begin{lemma}\label{lem:generalderivative}
     Let $x$ be an oriented regular closed curve  on $O$ and let $y$ be an oriented regular simple closed curve on $O_{\mathrm{reg}}$. Let $f$ and $g$ be homogeneous invariant functions. Let $\mathcal{F}^{g,y}_t$ be the Goldman flow associated to the Hamiltonian vector field $\mathbb{X}_{g_{y}}$ and let $[\rho_t] = \mathcal{F}^{g,y}_t ([\rho])$. Then we have
    \[
    \left.\frac{\rd}{\rd t}\right|_{t=0} f_x([\rho_t]) = \sum_{p\in x\sharp y} (\sign p) \kf(  \widehat{f}(\rho(x_p)), \widehat{g}(\rho(y_p))).
    \]
\end{lemma}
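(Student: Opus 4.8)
The plan is to reduce Lemma~\ref{lem:generalderivative} to the surface version, Theorem~\ref{thm:goldmanproduct}, by passing to a suitable finite cover and invoking the flow-lifting theorem, Theorem~\ref{prop:lifting}. The point is that $O$ itself need not be a surface, so Theorem~\ref{thm:goldmanproduct} does not apply directly; but by Selberg's lemma we may choose a finite normal covering $q:\widetilde{O}\to O$ where $\widetilde{O}$ is a closed orientable surface of genus $>1$. Since $y$ is a regular simple closed curve in $O_{\mathrm{reg}}$, its preimage $q^{-1}(|y|)$ consists of finitely many pairwise non-isotopic simple closed curves $\widetilde{y}_1,\dots,\widetilde{y}_d$ in $\widetilde{O}$; similarly, since $x$ is regular we may (after a homotopy inside $O_{\mathrm{reg}}$, using Lemma~\ref{lem:orbifold}) regard $x$ as a curve in $O_{\mathrm{reg}}$, and lift it to curves in $\widetilde{O}$.

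First I would set up the commuting square relating the flows. By Theorem~\ref{prop:lifting}, $q_!\bigl(\mathcal{F}^{g,y}_t([\rho])\bigr) = \mathcal{F}^{g,\widetilde{y}_1}_t\circ\cdots\circ\mathcal{F}^{g,\widetilde{y}_d}_t\bigl(q_!([\rho])\bigr)$, where the parametrizations $\widetilde{y}_i$ satisfy $q\circ\widetilde{y}_i(s)=y(s^m)$ with $m=|\operatorname{Stab}(\widetilde{y}_1)|$. Because $g$ is homogeneous, $\widehat{g}$ behaves well under this reparametrization: one has $\widehat{g}(\rho(\widetilde{y}_{i,p}))$ matching $\tfrac1m\widehat{g}$ of the corresponding power, and the $m$-fold wrapping contributes exactly the compensating factor so that the contribution of $\widetilde{y}_i$ to the derivative on $\widetilde{O}$ is, intersection-point by intersection-point, the same as the contribution of $y$ on $O$ — this homogeneity bookkeeping is the step I expect to need the most care. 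Next, since $f_x\circ q_! $ agrees with $(f\circ q_*)$ evaluated at a lift $\widetilde{x}$ of $x$ (again using that $f$ is invariant, so the value only depends on the conjugacy class of $\rho(q_*\widetilde{x})=\rho(x)^{\text{(wrapping power)}}$, and homogeneity converts the power to a scalar), we can differentiate $f_{\widetilde{x}}$ along the composed Goldman flow on $\widetilde{O}$ using Theorem~\ref{thm:goldmanproduct} and the fact that $\mathbb{X}_{g_{\widetilde{y}_1}}+\cdots+\mathbb{X}_{g_{\widetilde{y}_d}}$ is the Hamiltonian field of the composed flow.

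Applying Theorem~\ref{thm:goldmanproduct} on $\widetilde{O}$ then expresses $\left.\tfrac{\rd}{\rd t}\right|_{t=0} f_{\widetilde{x}}(q_!([\rho_t]))$ as a signed sum $\sum_i \sum_{p\in \widetilde{x}\sharp \widetilde{y}_i}(\sign p)\,\kf\bigl(\widehat{f}(\rho(q_*\widetilde{x}_p)),\widehat{g}(\rho(q_*\widetilde{y}_{i,p}))\bigr)$ over the lifted intersection points. The remaining task is combinatorial: the covering $q$ induces a bijection between $\bigsqcup_i (\widetilde{x}\sharp \widetilde{y}_i)$ and the fiber over $x\sharp y$, compatible with signs (the covering is orientation preserving since both $O$ and $\widetilde{O}$ are oriented), and using Lemma~\ref{lem:conjugation} together with homogeneity one checks that the summand at a lifted point $\widetilde{p}$ over $p\in x\sharp y$ equals the summand at $p$ scaled by a factor that cancels the degree-$|\Delta|$ multiplicity once we divide by the covering degree (equivalently, track exactly one lift of $x$ and note $f_{\widetilde{x}}\circ q_! = \deg(q)\cdot$-independent rescaling of $f_x$). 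Collecting terms yields $\left.\tfrac{\rd}{\rd t}\right|_{t=0} f_x([\rho_t]) = \sum_{p\in x\sharp y}(\sign p)\,\kf(\widehat{f}(\rho(x_p)),\widehat{g}(\rho(y_p)))$, as claimed. The main obstacle, as noted, is keeping the homogeneity/wrapping factors and the covering multiplicities in exact agreement so that everything cancels cleanly; the topological input (Theorem~\ref{prop:lifting}, Lemma~\ref{lem:orbifold}) is already in place.
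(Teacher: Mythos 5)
Your proposal follows essentially the same route as the paper: pass to a finite normal surface cover, use Theorem~\ref{prop:lifting} to rewrite $q_!$ of the Goldman flow as the composed flow along the lifts of $y$, apply Theorem~\ref{thm:goldmanproduct} upstairs, and use homogeneity together with Lemma~\ref{lem:conjugation} to match summands; the paper sums over \emph{all} lifts of $x$ and evaluates $\omega$ on the summed Hamiltonian fields (factor $|\Delta|$ on both sides), while your single-lift variant picks up the wrapping number $l$ instead, which works just as well. The only imprecision is the factor bookkeeping you flag: the correct statement is that homogeneity forces $\widehat{g}(z^m)=\widehat{g}(z)$ and $\widehat{f}(z^l)=\widehat{f}(z)$ for purely loxodromic $z$ (not a $\tfrac1m$ rescaling), so each lifted summand equals the downstairs summand exactly, and the multiplicity $l$ of lifted intersection points along the chosen lift of $x$ cancels against $f_{\widetilde{x}}\circ q_! = l\, f_x$ --- precisely the computation the paper carries out.
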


\begin{proof}
    If $O$ is a surface, the lemma follows from the Goldman product formula. 

    For an orbifold $O$ we take a finite index normal surface covering $q: \widetilde{O}\to O$. Let $\Delta$ be the group of deck transformations of the covering $\widetilde{O}\to O$. 
    
    Let $c_1,\cdots, c_r$ be connected components of $q^{-1}(|y|)$. For each $i$, we can find a parametrization $\widetilde{y}_i:\bS^1\to \widetilde{O}$ such that $q\circ \widetilde{y}_i(t) = y(t^m)$, $t\in \bS^1\subset \mathbb{C}$, where $m=|\operatorname{Stab}_\Delta (c_1)|$.  Similarly, since $x$ is regular, we also find an oriented closed curves $\widetilde{x}_i:\bS^1\to \widetilde{O}$, $i=1,2,\cdots,s$, so that $q\circ \widetilde{x}_i (t) = x(t^l)$, $l=|\operatorname{Stab}_\Delta(|\widetilde{x}_1|)|$.

    We compute the following quantity at $q_! ([\rho])$
    \[
    \omega\left( \sum_{j=1} ^s \mathbb{X}_{f_{\widetilde{x}_j}},  \sum_{i=1}^{r} \mathbb{X}_{g_{\widetilde{y}_i}}\right),
    \]
    where $\omega$ is the Atiyah-Bott-Goldman symplectic form on $\Hit_G(\widetilde{O})$.
    
    On one hand we have
    \begin{align}
\omega\left( \sum_{j=1} ^s \mathbb{X}_{f_{\widetilde{x}_j}}, \sum_{i=1}^{r} \mathbb{X}_{g_{\widetilde{y}_i}}\right)&= \sum_{j=1} ^s \left. \frac{\rd}{\rd t} \right|_{t=0}f_{\widetilde{x}_j}(\mathcal{F}^{g,\widetilde{y}_1}_{t}\circ \cdots \circ \mathcal{F}^{g,\widetilde{y}_r}_{t} (q_!([\rho])))    \nonumber\\
&=\sum_{j=1} ^s \left. \frac{\rd}{\rd t}\right|_{t=0}f_{\widetilde{x}_j}( q_!(\mathcal{F}^{g,y}_t ([\rho]))) \nonumber\\
&= \sum_{j=1} ^s \left. \frac{\rd}{\rd t} \right|_{t=0}f( \mathcal{F}^{g,y}_t ([\rho])(x^l) )\nonumber\\
&= |\Delta| \left. \frac{\rd}{\rd t} \right|_{t=0}f( \mathcal{F}^{g,y}_t ([\rho])(x) ) \label{eq:lhs}
    \end{align}
    from the definition of Hamiltonian flow and Theorem~\ref{prop:lifting}. 

    On the other hand, we apply Theorem~\ref{thm:goldmanproduct} to the surface $\widetilde{O}$ to compute
    \begin{align*}
    \omega\left(  \sum_{j=1} ^s\mathbb{X}_{f_{\widetilde{x}_j}}, \sum_{i=1}^{r} \mathbb{X}_{g_{\widetilde{y}_i}}\right)&=\sum_{j=1}^s\sum_{i=1}^r \omega (\mathbb{X}_{f_{\widetilde{x}_j}}, \mathbb{X}_{g_{\widetilde{y}_i}})\\
    &=\sum_{j=1} ^s \sum_{i=1}^r \sum_{p\in \widetilde{x}_j \sharp \widetilde{y}_i} (\sign p) \mathsf{B}(\widehat{f}(\rho(q_*((\widetilde{x}_j)_p))),\widehat{g}(\rho(q_*((\widetilde{y}_i)_p))))
    \end{align*}
    By our choice of lifts, the signs of intersections $(t_1,u_1),(t_2,u_2)\in \widetilde{x}_j\sharp \widetilde{y}_i$ are the same if $q(\widetilde{x}_j(t_1))=q(\widetilde{x}_j(t_2))=q(\widetilde{y}_i(u_1))=q(\widetilde{y}_i(u_2))$. It follows that for each of a fixed $j$, we have
    \begin{align*}
     \sum_{i=1}^r \sum_{p\in \widetilde{x}_j \sharp \widetilde{y}_i} (\sign p) \mathsf{B}(\widehat{f}(\rho(q_*((\widetilde{x}_j)_p))),\widehat{g}(\rho(q_*((\widetilde{y}_i)_p))))&=l\sum_{p\in x\sharp y}(\sign p) \kf (\widehat{f}(\rho(x_p)^l),\widehat{g}(\rho(y_p)^m))\\
     &=l\sum_{p\in x\sharp y}(\sign p) \kf (\widehat{f}(\rho(x_p)),\widehat{g}(\rho(y_p))).
    \end{align*}
    Therefore,

    \begin{align}\label{eq:rhs}
     \omega\left(  \sum_{j=1} ^s\mathbb{X}_{f_{\widetilde{x}_j}}, \sum_{i=1}^{r} \mathbb{X}_{g_{\widetilde{y}_i}}\right)&=l\sum_{j=1}^{s}\sum_{p\in x\sharp y}(\sign p) \kf (\widehat{f}(\rho(x_p)),\widehat{g}(\rho(y_p)))\\
     &=|\Delta| \sum_{p\in x \sharp y} (\sign p) \mathsf{B}(\widehat{f}(\rho(x _p)),\widehat{g}(\rho(y _p)))\nonumber.
    \end{align}
    By combining (\ref{eq:lhs}) and (\ref{eq:rhs}), we obtain the desired result.
\end{proof}

For a fixed infinite order $x\in \pi_1 ^{\mathrm{orb}}(O)$ and $c\in \bR$,  the condition $\alpha\circ \lambda(\rho(x))= c$ is not algebraic. However, we still expect that the set $ \{[\rho]\in \Hit_G(O)\,|\, \alpha(\lambda(\rho(x))) = c \}$ is almost negligible.

    \begin{lemma}\label{lem:generic}
    Let $x\in \pi_1 ^{\mathrm{orb}}(O)\setminus\{1\}$ be an infinite order element. Define
    \[
    U_{x,\alpha,c} := \{[\rho]\in \Hit_G(O)\,|\, \alpha(\lambda(\rho(x))) \ne c \}.
    \]
    Then for any $c\in \bR$ and $\alpha\in \mathfrak{a}^*$, $U_{x,\alpha,c}$ is generic provided it is non-empty.
\end{lemma}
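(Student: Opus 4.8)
The plan is to prove that $U_{x,\alpha,c}$ is generic by showing that its complement
\[
Z_{x,\alpha,c} := \{[\rho]\in \Hit_G(O)\,|\, \alpha(\lambda(\rho(x))) = c\}
\]
is contained in a proper analytic subvariety of $\Hit_G(O)$ whenever $U_{x,\alpha,c}\ne\emptyset$. Since $\rho(x)$ is purely loxodromic for every Hitchin representation (which is what makes $f_x:=\alpha\circ\lambda\circ\rho(x)$ a well-defined function, via Lemma~\ref{lem:openness}), the function $f_x:\Hit_G(O)\to\bR$ is real analytic. Hence $Z_{x,\alpha,c}=f_x^{-1}(c)$ is an analytic subset, and it is proper precisely when $f_x$ is not identically equal to $c$. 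So the entire content to verify is: \emph{if $f_x$ takes some value $\ne c$ somewhere, then $f_x$ is non-constant}, which is automatic, and then that a proper analytic subset has the right form — but actually we need more than ``proper'': genericity requires the complement to be a \emph{countable} union of proper analytic subvarieties, and here $Z_{x,\alpha,c}$ is already a single proper analytic subvariety, so $U_{x,\alpha,c}$ is generic. The only real point, then, is to confirm that $Z_{x,\alpha,c}\ne\Hit_G(O)$ follows from $U_{x,\alpha,c}\ne\emptyset$ — which is a tautology — together with the fact that $Z_{x,\alpha,c}$ is genuinely analytic, not merely closed.

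First I would record that $f_x$ is real analytic: by Lemma~\ref{lem:openness} the Jordan projection $\lambda$ is real analytic on the open set $U$ of purely loxodromic elements, and by Lemma~\ref{lem:generic}'s standing hypothesis (together with the argument in Section~\ref{sec:goldman}) $\rho(x)\in U$ for all $[\rho]\in\Hit_G(O)$; composing with the analytic evaluation map $[\rho]\mapsto\rho(x)$ and the linear functional $\alpha$ gives analyticity of $f_x$. Then $Z_{x,\alpha,c}=f_x^{-1}(c)=(f_x-c)^{-1}(0)$ is the zero locus of a real analytic function on the analytic manifold $\Hit_G(O)$, hence a (possibly empty, possibly improper) analytic subvariety. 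Next I would observe that $\Hit_G(O)$ is connected — it is a Hitchin component — so that if $Z_{x,\alpha,c}$ had nonempty interior it would be all of $\Hit_G(O)$ by the identity principle for real analytic functions on a connected manifold; since $U_{x,\alpha,c}=\Hit_G(O)\setminus Z_{x,\alpha,c}$ is assumed nonempty, $Z_{x,\alpha,c}$ is a proper analytic subvariety. Therefore $U_{x,\alpha,c}$ is the complement of a single proper analytic subvariety, which is in particular the complement of a countable (in fact finite) union of such, so $U_{x,\alpha,c}$ is generic.

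The one subtlety I would flag, and which I expect to be the genuine obstacle in the write-up, is the \emph{connectedness and analytic manifold structure} of $\Hit_G(O)$: for surface groups this is classical, but for orbifold Hitchin components one must cite the work of Alessandrini--Lee--Schaffhauser \cite{alessandrini} to guarantee that $\Hit_G(O)$ is a connected real analytic manifold on which the identity principle applies. A secondary point is ensuring the evaluation map $\Hit_G(O)\to G$, $[\rho]\mapsto$ (the conjugacy class data of) $\rho(x)$, is analytic and lands in the purely loxodromic locus; this is where one invokes that Hitchin representations are Anosov, so all infinite-order elements map to purely loxodromic (equivalently, positive proximal) elements — a fact already used implicitly to define $f_x$ in Section~\ref{sec:goldman}. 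Once these structural facts are in hand the argument is a two-line application of the identity principle; notably, this particular lemma does \emph{not} need the Goldman product formula or Lemma~\ref{lem:generalderivative} — those are used in the later arguments (Theorem~\ref{thm:genericjordan}) to prove the stronger statement that $U_{x,\alpha,c}$ is nonempty for the relevant $x$, which is exactly the hypothesis we are assuming here.
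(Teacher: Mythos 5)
Your proposal is correct and follows essentially the same route as the paper: $\rho(x)$ is purely loxodromic for every Hitchin representation, so $f_x=\alpha\circ\lambda\circ\rho(x)$ is real analytic by Lemma~\ref{lem:openness}, and if $U_{x,\alpha,c}\ne\emptyset$ its complement is the zero locus of the non-constant analytic function $f_x-c$, hence a proper analytic subvariety of the connected analytic manifold $\Hit_G(O)$. The only difference is cosmetic: where you cite the Anosov/positivity property of (orbifold) Hitchin representations directly, the paper passes to a finite manifold cover and deduces pure loxodromy of $\rho(x)$ from the Fock--Goncharov statement for the surface subgroup via $\rho(x^k)$.
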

\begin{proof}
    Choose a finite manifold covering $\widetilde{O}\to O$. One can identify $\Hit_G(O)$ as a submanifold of $\Hit_G(\widetilde{O})$ fixed by the action of the deck group of $\widetilde{O}\to O$.
    
    It is well-known \cite[Theorem 1.13]{fock} that for a Hitchin representation $\rho$,  $\rho(z)$ is purely loxodromic for any $z \in \pi_1 (\widetilde{O})\setminus\{1\}$.  Since $x$ has infinite order,  $x^k$ is a non-trivial element in $\pi_1(\widetilde{O})$ for some integer $k$. Therefore, $\rho(x^k)$ is purely loxodromic. This implies that $\rho(x)$ is also purely loxodromic. Thus, if $U_{x,\alpha,c}$ is not empty, then it is the complement of the zero locus of the non-constant real analytic function $\rho\mapsto \alpha(\lambda(\rho(x)))$. 
    Since Hitchin components are analytic connected manifold, $U_{x,\alpha,c}$ is open and dense in $\Hit_G(O)$.
\end{proof}
In the above proof, we implicitly showed that $\rho(x)$ is purely loxodromic provided $x$ has infinite order. 

The following result was essentially observed by Goldman in his own proof for Wolpert's cosine formula \cite[Theorem~3.9]{Goldman86}. For readers' convenience, we reformulate it as a lemma. 

\begin{lemma}\label{lem:computesummand}
    Let $\rho:\pi^{\mathrm{orb}}_1(O)\to \PSL_2(\bR)$ be a Fuchsian representation and let $x,y$ be infinite order elements in $\pi_1^{\mathrm{orb}}(O)$. Represent $x,y$ as parameterized closed geodesics in $\mathbb{H}^2/\rho(\pi_1^{\mathrm{orb}}(O))$. For each intersection $p=(t,s)\in x\sharp y$, let $\varphi_p([\rho])$ be the counter-clock angle from $\dot{x}(t)$ to $\dot{y}(s)$ at $x(t)$. Let $f:=\alpha\circ \lambda$ and $g=\beta \circ \lambda$ for some $\alpha,\beta\in \mathfrak{a}^*\setminus\{0\}$. Then 
    \[
    (\operatorname{sign} p) \kf( \widehat{f}(\iota_G\circ \rho(x_p)) , \widehat{g}(\iota_G\circ \rho(y_p)))=\begin{cases}
         \kf( \alpha^\vee , \Ad_{R(\varphi_p([\rho]))} \beta^\vee) & \text{ if }\operatorname{sign} p =1\\
         \kf( \alpha^\vee , -\Ad_{R(\varphi_p([\rho]))} w_0 \beta^\vee) & \text{ if }\operatorname{sign} p = -1
    \end{cases}
    \]
    where 
    \[R(\varphi_p([\rho]))=\iota_G\left( \begin{pmatrix}
       \cos (\varphi_p([\rho])/2) & \sin (\varphi_p([\rho])/2) \\ -\sin (\varphi_p([\rho])/2) & \cos (\varphi_p([\rho])/2) 
    \end{pmatrix}\right).
    \]
\end{lemma}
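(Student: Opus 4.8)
The plan is to reduce the identity to a single intersection point and then to compute explicitly inside the subgroup $\iota_G(\PSL_2(\bR))\subset G$, using the Goldman functions computed in Lemma~\ref{lem:goldmanfunctioncompute}(iii) together with the conjugation equivariance of Lemma~\ref{lem:conjugation}. Since $\kf$ is $\Ad_G$-invariant and $\widehat{(\cdot)}$ is $\Ad$-equivariant, replacing $\rho$ by $g\rho g^{-1}$ with $g\in\PSL_2(\bR)$ changes neither side of the asserted formula, and it changes neither $\operatorname{sign} p$ nor the oriented angle $\varphi_p([\rho])$ (elements of $\operatorname{Isom}^+(\mathbb H^2)$ are orientation-preserving and conformal). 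So I would first normalize: lift the intersection point $x(t)=y(s)$ to the common point $\tilde p$ of the axes of $\rho(x_p)$ and $\rho(y_p)$ in $\mathbb H^2$, and conjugate so that $\tilde p=i$, the axis of $\rho(x_p)$ is the imaginary axis, and $\dot x(t)$ points toward $i\infty$. Then $\rho(x_p)$ lies in the diagonal subgroup and translates upward, while the oriented axis of $\rho(y_p)$ is the image of the up-directed imaginary axis under the rotation of $\mathbb H^2$ about $i$ through the angle $\varphi_p([\rho])$, that is, under the $\PSL_2(\bR)$-element $r$ with $\iota_G(r)=R(\varphi_p([\rho]))$.

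I would then invoke the structure of the principal embedding. The principal $\mathfrak{sl}_2$-triple $\{E,F,H\}$ is compatible with the Cartan involution, so $\iota_G$ maps $\mathrm{SO}(2)$ into $K$ and maps the diagonal one-parameter group $\diag(e^{t/2},e^{-t/2})$ to $\exp(tH)$; because $E$ is a regular nilpotent and $G$ is split, every simple restricted root takes the value $2$ on $H$, so $H\in\mathfrak a^+$. Hence $\iota_G\rho(x_p)=\exp(tH)$ with $t>0$ lies in $\exp\mathfrak a^+$, and Lemma~\ref{lem:goldmanfunctioncompute}(iii) gives $\widehat f(\iota_G\rho(x_p))=\alpha^\vee$. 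Similarly $\iota_G\rho(y_p)=R(\varphi_p([\rho]))\exp(cH)R(\varphi_p([\rho]))^{-1}$ for some nonzero $c$, so Lemma~\ref{lem:conjugation} together with Lemma~\ref{lem:goldmanfunctioncompute}(iii) gives $\widehat g(\iota_G\rho(y_p))=\Ad_{R(\varphi_p([\rho]))}\beta^\vee$ if $c>0$ and $\Ad_{R(\varphi_p([\rho]))}w_0\beta^\vee$ if $c<0$. Substituting into $\kf$ and using $\Ad$-invariance to kill the conjugating rotation yields a pairing of $\alpha^\vee$ with $\Ad_{R(\varphi_p([\rho]))}\beta^\vee$ (resp.\ with $\Ad_{R(\varphi_p([\rho]))}w_0\beta^\vee$).

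What remains---and what I expect to be the main obstacle---is the orientation bookkeeping that matches $\operatorname{sign} p$, the sign of the frame $\{\dot x(t),\dot y(s)\}$, to these two alternatives and pins down the overall sign. When $\operatorname{sign} p=+1$, the direction $\dot y(s)$ lies in the positive half-turn from $\dot x(t)$, $c>0$, and one gets $\kf(\alpha^\vee,\Ad_{R(\varphi_p([\rho]))}\beta^\vee)$; when $\operatorname{sign} p=-1$ one is forced to pass through the rotation by $\pi$ about $i$, whose image under $\iota_G$ lies in $N_K(\mathfrak a)$ and represents $w_0$ (it carries the regular element $H$ to $-H=w_0H$), and tracking this rotation through the computation is exactly what produces both the global minus sign and the longest Weyl element $w_0$ in the $\operatorname{sign} p=-1$ branch. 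I would verify the two cases by hand against the definition of $\operatorname{sign} p$, following Goldman's treatment of the length function in the proof of Wolpert's cosine formula \cite[Theorem~3.9]{Goldman86}; the case $G=\PSL_2(\bR)$, with $\alpha=\beta$ the half translation-length functional, gives a reassuring check, since there the identity reduces to Wolpert's formula.
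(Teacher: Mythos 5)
Your proposal is correct and takes essentially the same route as the paper: conjugate so that the intersection lifts to $i$ with the axis of $\rho(x_p)$ the upward imaginary axis (hence $\iota_G\circ\rho(x_p)\in\exp\mathfrak{a}^+$), write $\iota_G\circ\rho(y_p)$ as an $R(\varphi_p([\rho]))$-conjugate of an element of $\exp\mathfrak{a}^+$ or of its inverse according to $\operatorname{sign} p$, and conclude from Lemma~\ref{lem:conjugation} and Lemma~\ref{lem:goldmanfunctioncompute}(iii), with the half-turn about $i$ accounting for $w_0$ exactly as you indicate. The only small slip is attributing the overall minus sign in the $\operatorname{sign} p=-1$ branch to that half-turn --- it is simply the prefactor $\operatorname{sign} p=-1$ moved to the right-hand side --- and the orientation bookkeeping you defer is handled in the paper's own proof at the same level of brevity, so there is no genuine gap.
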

\begin{proof}
    Take the lifts $\widetilde{x}:\bR\to \mathbb{H}^2$ and $\widetilde{y}:\bR\to \mathbb{H}^2$ of $x$ and $y$ in the upper-half plane model $\mathbb{H}^2$ such that $\widetilde{x}(0)$ and $\widetilde{y}(0)$ descend to the intersection point $x(t)=y(s)$. By taking a suitable $\PSL_2(\bR)$-conjugation, we may assume that $\widetilde{x}$ is the ray from $0$ to $\infty$ and the intersection point $\widetilde{x}(0)=\widetilde{y}(0)$ is situated at $\sqrt{-1}$. Then we know that $\rho(x_p) = \diag(\exp (\ell), \exp (-\ell))$, $\ell >0$, and that the axis of $\rho(y_p)$ passes through $\sqrt{-1}$.  Hence, $\iota_G\circ \rho(x_p)\in \exp\mathfrak{a}^+$. Moreover, conjugating $\rho(y_p)$ by
    \[
    \begin{pmatrix}
       \cos \frac{\varphi_p([\rho])}{2} & -\sin \frac{\varphi_p([\rho])}{2} \\ 
       \sin \frac{\varphi_p([\rho])}{2} & \cos \frac{\varphi_p([\rho])}{2} 
    \end{pmatrix}\] 
    brings $\rho(y_p)$ into $\diag(\exp(\ell'),\exp(-\ell'))$ if $\operatorname{sign}p=1$ and into $\diag(\exp(-\ell'),\exp(\ell'))$ if $\operatorname{sign}p=-1$ for some $\ell'>0$. Therefore, $\Ad_{R(\varphi_p(\rho))}\iota_G\circ \rho(y_p)$ is in $\exp \mathfrak{a}^+$ if $\operatorname{sign} p =1$ and in $\exp -\mathfrak{a}^+$ if $\operatorname{sign} p = -1$. Therefore, the conclusion follows from Lemma~\ref{lem:conjugation} and Lemma~\ref{lem:goldmanfunctioncompute}.
\end{proof}

Now we can show the main theorem. 

\begin{proof}[Proof of Theorem~\ref{thm:genericjordan}]
Let $x\in \pi_1^{\mathrm{orb}}(O)\setminus\{1\}$ be a regular element and let $f:=\alpha\circ \lambda$. As before, we define $f_x:\Hit_G(O)\to \bR$ by $f_x([\rho]) := f(\rho(x))$. Choose a regular oriented closed curve representing $x\in \pi_1^{\mathrm{orb}}(O)$. We denote this oriented closed curve by the same letter $x$.

Let us show (ii) first. Under the assumption $\alpha = -w_0 \alpha$, we will prove that the set $U_{x,\alpha,c}$ in Lemma~\ref{lem:generic} is not empty. 

If $\alpha(\lambda(\rho(x)))\ne c$ for some Fuchsian $\rho$, there is nothing to prove. Hence, we assume that $\alpha(\lambda(\rho(x)))= c$ for all Fuchsian $\rho$. 

Take an oriented essential simple closed curve $y:\bS^1\to O_{\mathrm{reg}}$ that intersects essentially $x$ at least once. This is possible by Lemma~\ref{lem:orbifold}. Let $\rho$ be any Fuchsian representation. Let $\rho_t$ be the 1-parameter family of Fuchsian representations obtained by applying the twisting flow along $y$. We denote by $x_t$ and $y_t$ the (parameterized) geodesic representatives of $x$ and $y$ with respect to the hyperbolic structure $\rho_t$. For each intersection $p\in x_t\sharp y_t$, let $\varphi_p (\rho)$ be the counter-clock angle from $x_t$ to $y_t$ at the intersection $p$.  

The following claim is a folklore but it seems that it has never been explicitly stated. 

\begin{claim}\label{claim:angle}
    $\max_{p\in x_t\sharp y_t} \varphi_p([\rho_t])$ converges to $0$ as $t\to \infty$.
\end{claim}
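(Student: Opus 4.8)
The plan is to deduce Claim~\ref{claim:angle} from Wolpert's cosine formula --- available here both classically and via Lemma~\ref{lem:generalderivative} and Lemma~\ref{lem:computesummand} --- together with the linear growth of geodesic length under twisting and Kerckhoff's convexity. First I would record the combinatorial setup: since $x$ has infinite order and intersects $y$ essentially, for every $t$ the geodesic representatives $x_t$ and $y_t=y$ meet transversally and $\#(x_t\sharp y_t)$ equals the geometric intersection number $n$ of $x$ and $y$, which is a homotopy invariant and hence independent of $t$, with $n\ge1$. Taking $f=g$ to be the geodesic length function $\ell$ on $\PSL_2(\bR)$ --- which is of the form $\alpha\circ\lambda$ and so is a homogeneous invariant function --- Lemma~\ref{lem:generalderivative} and Lemma~\ref{lem:computesummand} yield, after the elementary identity $\kf(\alpha^\vee,\Ad_{R(\varphi)}\alpha^\vee)=\|\alpha^\vee\|^2\cos\varphi$ (and the same for the $\sign p=-1$ term, since $w_0=-1$ on $\mathfrak a$ for $\PSL_2$), the formula
\[
\frac{\rd}{\rd t}\,\ell_{\rho_t}(x)=\kappa\!\!\sum_{p\in x_t\sharp y_t}\!\!\cos\varphi_p([\rho_t])
\]
for a positive constant $\kappa$; this is Wolpert's cosine formula for the twist flow along $y$ (cf.\ \cite[Theorem~3.9]{Goldman86}), and in the orbifold case it follows by passing to a finite manifold cover and invoking Theorem~\ref{prop:lifting}.

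Next I would control $\ell_{\rho_t}(x)$ itself. The displayed formula gives $\frac{\rd}{\rd t}\ell_{\rho_t}(x)\le\kappa n$, hence $\ell_{\rho_t}(x)\le\kappa n\,t+O(1)$. For the matching lower bound I would use the collar lemma --- equivalently, the standard fact that $\ell_\rho(T_y^k x)\sim|k|\,\ell_\rho(y)\,n$ for the Dehn twist $T_y$ about $y$ --- to get $\ell_{\rho_t}(x)\ge\kappa n\,t-O(1)$, so that $\ell_{\rho_t}(x)/t\to\kappa n$. Since $\frac{\rd}{\rd t}\ell_{\rho_t}(x)$ is nondecreasing by Kerckhoff's convexity of $t\mapsto\ell_{\rho_t}(x)$ along the twist flow, these two facts force $\frac{\rd}{\rd t}\ell_{\rho_t}(x)\to\kappa n$, i.e.
\[
\sum_{p\in x_t\sharp y_t}\cos\varphi_p([\rho_t])\longrightarrow n\qquad(t\to\infty).
\]

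To conclude, this is a sum of exactly $n$ real numbers, each at most $1$, converging to $n$, so every summand converges to $1$: hence $\cos\varphi_p([\rho_t])\to1$ for each $p$, the geodesics $x_t$ and $y$ become everywhere tangent, and $\varphi_p([\rho_t])\to0$ (the orientation conventions in force making the counter-clockwise angle, rather than $2\pi$ minus it, the small one). Therefore $\max_{p\in x_t\sharp y_t}\varphi_p([\rho_t])\to0$, which is the claim. I expect the main obstacle to be the sharp two-sided asymptotic $\ell_{\rho_t}(x)=\kappa n\,t+O(1)$, in particular its lower bound, where the constant has to come out exactly $\kappa n$: this is classical (it is the statement that the asymptotic translation length of $x$ along the earthquake flow about $y$ is $n$ per unit twist-length), but a self-contained proof requires some care with the universal cover and the collar lemma. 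Kerckhoff's convexity and the cosine formula are already in hand --- the latter from Lemma~\ref{lem:computesummand} --- and all the inputs pass to orbifolds via a finite manifold cover.
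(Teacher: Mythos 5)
Your argument is essentially correct, but it takes a genuinely different route from the paper's. The paper proves the claim by soft hyperbolic geometry: Kerckhoff's monotonicity of the intersection angles reduces the statement to the integer twist times $t=k\ell$, where the time-$\ell$ map is the Dehn twist $\operatorname{tw}_y$, and then one checks in the universal cover that the endpoints of a lift of $\operatorname{tw}_y^k(x)$ converge to the endpoints of a lift of $y$ in $\partial_\infty\mathbb{H}^2$, so the angles at all crossings tend to $0$. You instead treat the cosine formula (recovered from Lemma~\ref{lem:generalderivative} and Lemma~\ref{lem:computesummand}) as an identity, use Kerckhoff's convexity to get convergence of $\frac{\rd}{\rd t}\,\ell_{\rho_t}(x)$, identify the limit with the extremal value $\kappa n$ via the asymptotic $\ell_{\rho_t}(x)/t\to\kappa n$, and conclude by the squeeze $\min_p\cos\varphi_p\ \ge\ \sum_q\cos\varphi_q-(n-1)\to 1$ (this last inequality also spares you from tracking individual intersection points as $t$ varies). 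Two caveats on what this buys. First, the entire geometric content is outsourced to the sharp growth asymptotic $\ell_X(\operatorname{tw}_y^k x)\sim k\,\ri(x,y)\,\ell_X(y)$; the collar lemma alone gives only a linear lower bound with the wrong constant, and the standard proofs of the sharp statement (convergence $\frac1k\operatorname{tw}_y^k x\to \ri(x,y)\,y$ in measured laminations, or a universal-cover argument) are at least as much work as the paper's endpoint-convergence argument -- indeed a direct proof of that asymptotic essentially contains the paper's proof. You must also match the normalization constant $\kappa$ on both sides, i.e.\ relate the Hamiltonian/Goldman twist parameter to the Fenchel--Nielsen twist used in the classical asymptotic; cleanest is to run the whole argument with hyperbolic length and the Fenchel--Nielsen twist ($\kappa=1$) and rescale at the end. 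Second, your parenthetical about $0$ versus $2\pi$ is a non-issue once the angle is read as Lemma~\ref{lem:computesummand}'s proof requires, namely $\varphi_p\in(0,\pi)$ with the sign of $p$ recording the direction of the crossing; with that convention $\cos\varphi_p\to1$ gives $\varphi_p\to 0$ directly, and the formula $\kappa\sum_p\cos\varphi_p$ is exactly Wolpert's, so no appeal to ``orientation conventions'' is needed.
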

\begin{proof}
Let $X_t$ be a hyperbolic structure on $S$ with holonomy $\rho_t$. Let $p\in x_t \sharp y_t$ be the intersection point realizing the value $\max_{p\in x_t\sharp y_t} \varphi_p([\rho_t])$.

Kerckhoff \cite[Proposition~3.5]{kerckhoff} showed that, along the path $[\rho_t]$, the angle $\varphi_p([\rho_t])$  at each intersection point $p\in x_t\sharp y_t$ is strictly decreasing. Hence, it suffices to show that for positive integers $k$, $\varphi_{p}([\rho_{k\ell}])\to 0$ as $k\to \infty$ where  $\ell$ is the hyperbolic length of $y$ in $X_0$. Since the time $\ell$ map of the twist flow along $y$ is the same as the action of the Dehn twist $\operatorname{tw}_y$ along $y$, it is enough to show that the angle converges to 0 under the iterate action of the Dehn twist along $y$. 

The angle $\varphi_p([\rho_{k\ell}])$ is the same as the maximal angle between $y_0$ and the geodesic representative of $\operatorname{tw}^{k}_y (x)$ in $X_0$. Since a closed curve and its geodesic representative have the same endpoints in the ideal boundary universal cover $\partial_\infty \mathbb{H}^2$, we investigate the location of the endpoints of the closed curve $\operatorname{tw}^{k}_y (x)$. To find this, we take the inverse image $\widetilde{y}_0$ of the geodesic $y_0$ to the universal cover $\mathbb{H}^2$. A lift of $x_0$ is a geodesic intersecting $\widetilde{y}_0$. The closed curve  $\operatorname{tw}^{k}_y (x)$ is obtained by shifting up by $k\ell$ along a lift of $y$. Hence, as $k\to \infty$, the endpoints of  $\operatorname{tw}^{k}_y (x)$ converge to the endpoints of one of a lift of $y$. See Figure~\ref{fig:dehn}.\end{proof}
\begin{figure}[hbt]
    \centering
    \includegraphics[width=0.9\linewidth]{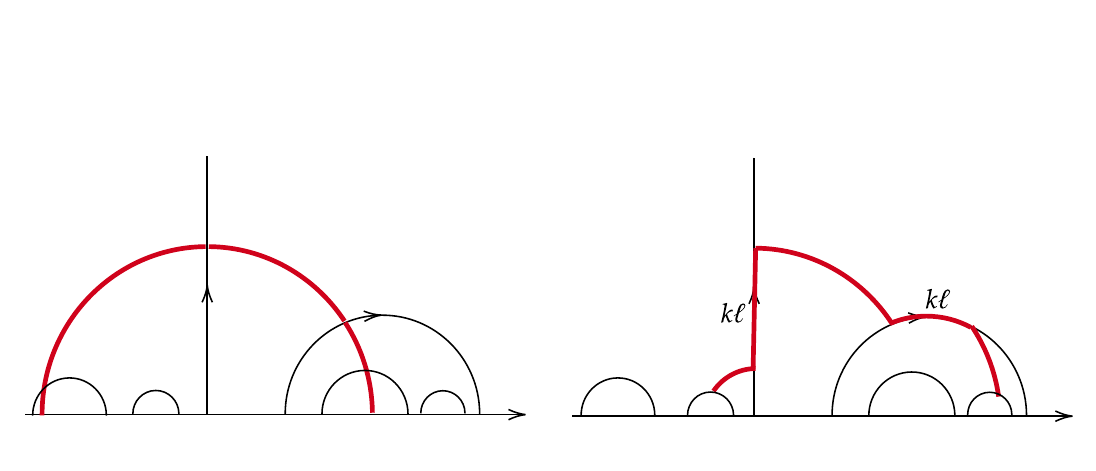}
    \caption{The effect of the Dehn twist. Black geodesics are lifts of $y$ and red geodesic is a lift of $x$ in $\mathbb{H}^2$. The red curve in the right hand side is a lift of $\operatorname{tw}^{k}_y(x)$.}
    \label{fig:dehn}
\end{figure}

Due to Claim~\ref{claim:angle}, $R(\varphi_p([\rho_t]))$ converges to the identity as $t\to \infty$. Thus we can take a sufficiently large $t_0$ such that 
\[
    \kf( \alpha^\vee, \Ad_{R(\varphi_p([\rho_{t_0}]))} \alpha^\vee )>0
\]
for all $p\in x_{t_0}\sharp y_{t_0}$. Let $\rho' := \rho_{t_0}$ and let $[\rho'_s]$  be the Goldman flow associated to the vector field $\mathbb{X}_{f_y}$ with $[\rho'_0] = [\rho']$. Since $\alpha= -w_0 \alpha$,  Lemma~\ref{lem:generalderivative} and  Lemma~\ref{lem:computesummand} tell us that
\[
\left.\frac{\rd}{\rd s} \right|_{s=0} f_x ([\rho'_s]) = \sum_{p\in x_{t_0}\sharp y_{t_0}} \kf( \alpha^\vee , \Ad_{R(\varphi_p([\rho']))} \alpha^\vee).
\]
By our choice of $\rho'$, we have
\[
\left.\frac{\rd}{\rd s} \right|_{s=0} f_x ( [\rho' _s]) >0.
\]
Therefore, one can find a sufficiently small $s>0$ so that $f_x([\rho_s'])=f(\rho'_s (x))>f(\rho'(x))= c$. This shows  that $U_{x,\alpha, c} \ne \emptyset$. 

Since
    \begin{multline*}
    \{[\rho]\in \Hit_G(O) \,|\, \alpha\circ \lambda (\rho(x)) \ne c \text{ for all regular  }x\notin \ker(\pi_1^{\mathrm{orb}}(O)\to \pi_1^{\mathrm{orb}}(O)^{\mathrm{ab}})\}\\
    =\bigcap_{\substack{w\notin \ker(\pi_1^{\mathrm{orb}}(O)\to \pi_1^{\mathrm{orb}}(O)^{\mathrm{ab}})\\ w\text{ regular}}} U_{w,\alpha, c},
    \end{multline*}
    the theorem follows from Lemma~\ref{lem:generic}.

Now we show (i). Again, we want to prove that $U_{x,\alpha,c}$ is non-empty. Hence assume that all Fuchsian representations are not in the set $U_{x,\alpha,c}$. 

Since $x$ is not in the kernel of the abelianization, $x$ is non-zero even in the homology $H_1(O_{\mathrm{reg}})$. Since $x$ is regular, we can find an oriented regular simple closed curve $y$ in $O_{\mathrm{reg}}$ such that the algebraic intersection number $\ri(x,y)$ is positive. 

Let $\alpha_+:=\frac{1}{2}(\alpha+ w_0 \alpha)$ and $\alpha_- := \frac{1}{2}(\alpha - w_0 \alpha)$. Observe that
\[
\kf (\alpha_+^\vee, \alpha_-^\vee)  = 0.
\]
We may assume that $\alpha_+\ne 0$. Indeed, the $\alpha_+=0$ case was already handled in item (ii) above. Choose a positive number $\epsilon$ such that
\[
\epsilon < \frac{1}{8 |x\sharp y |}\kf(\alpha_+^\vee, \alpha_+ ^\vee).
\]
Let $\rho_t$ be the twisting flow along $y$ emanating from a Fuchsian representation $\rho$. As in the above, we may choose a sufficiently large $t_0$ so that 
\begin{align*}
\left|\kf(\alpha_+ ^\vee, \Ad_{R(\varphi_p([\rho_{t_0}]))} \alpha_+^\vee) - \kf (\alpha_+ ^\vee, \alpha_+ ^\vee) \right| &<\epsilon\\
\left|\kf(\alpha_+ ^\vee, \Ad_{R(\varphi_p([\rho_{t_0}]))} \alpha_-^\vee) \right| &<\epsilon
\end{align*}
for all $p\in x_{t_0}\sharp y_{t_0}$. 

Let $(x_{t_0}\sharp y_{t_0})_+$ be the set of positive intersections and let $(x_{t_0}\sharp y_{t_0})_-$ be the set of negative intersections. Let $\rho'_s$ be the Goldman flow along $y$ with $\rho'_0 =\rho':= \rho_{t_0}$ associated to the invariant function $\alpha_+\circ \lambda$. Since $\alpha = \alpha_++\alpha_-$,  we can write
\begin{multline*}
    \left. \frac{\rd}{\rd s}\right|_{s=0}f(\rho'_s(x))= \sum_{p\in (x_{t_0}\sharp y_{t_0})_+ } \kf (\alpha_+ ^\vee, \Ad_{R(\varphi_p([\rho']))}\alpha_+ ^\vee )-\sum_{p\in (x_{t_0}\sharp y_{t_0})_-}\kf (\alpha_+ ^\vee,\Ad_{R(\varphi_p([\rho']))} \alpha_+ ^\vee )\\
    +\sum_{p\in x_{t_0}\sharp y_{t_0}} \kf (\alpha_- ^\vee, \Ad_{R(\varphi_p([\rho']))}\alpha_+ ^\vee).
\end{multline*}
From the construction, we know that
\begin{align*}
\left. \frac{\rd}{\rd s}\right|_{s=0}f(\rho'_s(x))&>  \ri(x,y) \left( \kf (\alpha_+ ^\vee, \alpha_+ ^\vee ) -\epsilon\right) -2 \epsilon|(x\sharp y)_-| - \epsilon |x\sharp y| \\
&\ge \ri(x,y)\kf (\alpha_+ ^\vee, \alpha_+ ^\vee ) -\epsilon \ri(x,y)  -3\epsilon|x\sharp y|\\
&\ge \ri(x,y)\kf (\alpha_+ ^\vee, \alpha_+ ^\vee ) - 4 \epsilon |x\sharp y|\\
&>\left( \ri(x,y) -\frac{1}{2}\right)\kf (\alpha_+ ^\vee, \alpha_+ ^\vee )\\
&> 0.
\end{align*}
Therefore, there is a sufficiently small $s_0$ such that $\rho'_{s_0}\in U_{x,\alpha,c}$. To conclude the proof, we use the same argument as in the proof for (ii) above. 
\end{proof}

The idea of the proof for Theorem~\ref{thm:genericjordan}  carries over to the proof for Theorem~\ref{thm:main2}.

\begin{proof}[Proof of Theorem~\ref{thm:main2}] Following \cite{long}, let $\operatorname{Bad}(x,y)$ be the set of $G$-Hitchin representations $\rho$ such that $\rho(x)$ and $\rho(y)$ do not generate a Zariski dense subgroup of $G$. Since $\operatorname{Bad}(x,y)$ is an analytic subvariety of $\overline{\Hit}_G(O)$ \cite[Theorem~4.1]{tao}, it suffices to show that $\operatorname{Bad}(x,y)$ is proper for every non-commuting pair $\{x,y\}$. 

By mean of contradiction, suppose that there are non-commuting $x,y\in \pi_1 ^{\mathrm{orb}}(O)$ such that $\operatorname{Bad}(x,y)=\overline{\Hit}_G(O)$. 

Choose a finite index torsion-free subgroup $\Gamma$ of $\langle x,y\rangle$ and take a covering $\widetilde{O}$ of $O$ corresponding to $\Gamma$. Let $\eta$ be in the Fuchsian locus of $\overline{\Hit}_G(O)$. Because $\eta|\Gamma=\eta|\pi_1(\widetilde{O})$ is the holonomy of the lifted hyperbolic structure on $\widetilde{O}$, we know that $\eta|\Gamma$ is a positive representation in the sense of Fock and Goncharov \cite{fock}. Since positivity is an open condition, one can find a small open set $V\subset \Hit_G(O)$ around $[\eta]$ such that for any $[\rho]\in V$, $[\rho|\Gamma]$ is positive. Let $\overline{V}$ be the preimage of $V$ under the projection $\overline{\Hit}_G(O) \to \Hit_G(O)$.

For $\rho\in \overline{V}$, let $\rho_\Gamma:=\rho|\Gamma:\Gamma \to G$ and let $H_{\rho_\Gamma}$ be the Zariski closure of $\rho(\Gamma)$. By assumption, we know that $H_{\rho_\Gamma}\ne G$ for all $\rho\in \overline{V}$. 

Consider the semisimplification $\rho_\Gamma ^{ss}:\Gamma \to H_{\rho_\Gamma}^{ss}$ of the representation  $\rho_\Gamma$. Note that $H_{\rho_\Gamma} ^{ss}$ is a proper semi-simple Lie subgroup of $G$. It is known \cite[Lemma 2.40]{gueritaud} that $\lambda (\rho_\Gamma ^{ss}(z)) = \lambda(\rho_\Gamma(z))=\lambda(\rho(z))$ for all $z\in \Gamma$.  

By Sambarino \cite[Theorem~B]{sambarino}, there are only finitely many semi-simple subalgebras $\mathfrak{h}_1, \mathfrak{h}_2, \cdots, \mathfrak{h}_k$ of $\mathfrak{g}$ whose real ranks are strictly smaller than the real rank of $G$ such that, apart from the Zariski dense case, the Lie algebra of $H_{\rho_\Gamma} ^{ss}$ is conjugate to one of $\mathfrak{h}_i$. Therefore, one can find finitely many elements $\alpha_1,  \alpha_2,\cdots,\alpha_m$ in $\mathfrak{a}^*\setminus\{0\}$ such that for any $\rho \in \overline{V}$ and $z\in \Gamma$, we have $\alpha_i (\lambda(\rho_\Gamma^{ss} (z)))=\alpha_i (\lambda(\rho_\Gamma(z)))=\alpha_i (\lambda(\rho(z)))= 0$ for some $i$. This means that the set $\bigcap_{i=1} ^m U_{z,\alpha_i, 0}$ avoids the non-empty open set $V$. 

On the other hand, because $O$ has at most one even order cone point, $\langle x,y\rangle$ contains a regular $z\in \pi_1^{\mathrm{orb}}(O)$ in $\Gamma$. By the proof of Theorem~\ref{thm:genericjordan}~(ii), we know that $\bigcap_{i=1} ^m U_{z,\alpha_i, 0}$ is an open dense subspace of $\Hit_G(O)$. This leads us to a contradiction. 
\end{proof}

\section{Further Discussions and Questions}\label{sec:discussion}

In this section, we focus on the $G=\PSL_3(\bR)$ case and present explicit formulas computing the first variation of eigenvalues along Fuchsian locus. In what follows we abbreviate $\Hit_3(S) := \Hit_{\PSL_3(\bR)}(S)$. We also use the trace as our non-degenerate invariant symmetric bilinear form $\kf$.

Let $c$ be an oriented essential simple closed curve $c$.  
In $\Hit_3(S)$, we write $\ell:=\lambda_1-\lambda_3$.  As before, $\lambda_i$ denotes the logarithm of the $i$th largest eigenvalue. The quantity $\ell_c([\rho])=\lambda_1(\rho(c))-\lambda_3(\rho(c))$, $[\rho]\in \Hit_3(S)$ records the Hilbert length of $c$ with respect to the convex real projective geometry corresponding to $[\rho]$. 

There are two types of fundamental Hamiltonian flows.  One is the twisting flow $\mathcal{T}^c_t$ along $c$ which is the same as the classical twisting flow on the Teichm\"ulle space. The other flow is so-called the bulging flow $\mathcal{B}^c _t$ along $c$. The bulging flow allows us to move away from the Fuchsian locus.  It is known that $\mathcal{T}^c_t$ and $\mathcal{B}^c_t$ are Hamiltonian flows corresponding to the vector fields $\mathbb{X}_{\ell_c}$ and $\mathbb{X}_{(\lambda_2)_c}$ respectively.

Recall that one possible embedding $\iota_{\PSL_3(\bR)}:\PSL_2(\bR)\to \PSL_3(\bR)$ is given by
\begin{equation}\label{eq:embedding}
\begin{pmatrix}
    a & b \\ c & d 
\end{pmatrix}\mapsto \begin{pmatrix}
    a^2 & ab & b^2 \\ 2ac & ad+bc & 2bd\\ c^2 & cd & d^2
\end{pmatrix}.
\end{equation}
A straightforward computation using (\ref{eq:embedding}) and  Lemma~\ref{lem:computesummand} yields the following. 
    
    \begin{corollary}\label{cor:variation}
        Let $x$ be an oriented essential closed curve  and let $y$ be an oriented essential simple closed curve.  Let $[\rho]\in \Hit_3(S)$ be a Fuchsian representation. Then we have
        \begin{align}
            \left.\frac{\rd}{\rd t}\right|_{t=0} \ell_x (\mathcal{T}^y _t ([\rho])) &= \sum_{p \in x\sharp y} 2 \cos\varphi_p ([\rho])\label{eq:twisting}\\
            \left.\frac{\rd}{\rd t}\right|_{t=0}(\lambda_2)_x (\mathcal{B}_t ^y ([\rho])) &=\sum_{p\in x\sharp y}  \frac{3}{2} (\operatorname{sign} p) (1+3 \cos  2\varphi_p([\rho]))\label{eq:bulging}\\
            \left.\frac{\rd}{\rd t}\right|_{t=0}\ell_x (\mathcal{B}_t ^y ([\rho])) &=\left.\frac{\rd}{\rd t}\right|_{t=0}(\lambda_2)_x (\mathcal{T}_t ^y ([\rho]))=0 \label{eq:noeffect},
        \end{align}        
        where $\varphi_p([\rho])$ is the counter-clock angle at $p$ from geodesic representatives of $x$ to $y$ measured in the hyperbolic structure corresponding to $[\rho]$.
    \end{corollary}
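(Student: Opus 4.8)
The plan is to combine Lemma~\ref{lem:generalderivative}, Lemma~\ref{lem:computesummand}, and the explicit embedding (\ref{eq:embedding}) to reduce each of the three variation formulas to an elementary trigonometric identity. Since each flow $\mathcal{T}^y_t$ and $\mathcal{B}^y_t$ is the Goldman flow associated to the Hamiltonian vector field of $\ell_y = (\lambda_1-\lambda_3)_y$ and $(\lambda_2)_y$ respectively, Lemma~\ref{lem:generalderivative} tells us that, at a Fuchsian $[\rho]$,
\[
\left.\frac{\rd}{\rd t}\right|_{t=0} h_x(\mathcal{F}^{g,y}_t([\rho])) = \sum_{p\in x\sharp y}(\operatorname{sign} p)\,\kf(\widehat{h}(\iota_G\circ\rho(x_p)),\,\widehat{g}(\iota_G\circ\rho(y_p))),
\]
where $h\in\{\ell,\lambda_2\}$ and $g\in\{\ell,\lambda_2\}$ according to which formula we are proving. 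So the first step is to express the relevant linear functionals: writing $\mathfrak{a}\subset\mathfrak{sl}_3(\bR)$ as the diagonal traceless matrices, we have $\ell = \varepsilon_1-\varepsilon_3$ and $\lambda_2 = \varepsilon_2$, and with $\kf=\Tr$ one computes the dual vectors $\ell^\vee = \diag(1,0,-1)$ and $\lambda_2^\vee = \frac{1}{3}\diag(-1,2,-1)$. Since $w_0$ acts on $\mathfrak{a}$ by $-1$ for $\PSL_3(\bR)$ (type $A_2$ with $-1$ in the Weyl group? — no: one must be careful here, $w_0 = -\mathrm{(outer)}$, so $w_0\ell^\vee = \ell^\vee$ while $w_0\lambda_2^\vee = -\frac{1}{3}\diag(-1,2,-1)$ read in reversed order, i.e. $w_0\lambda_2^\vee$ again equals $\frac13\diag(-1,2,-1)$ after accounting for the reversal; the clean statement is $-w_0\ell=\ell$ and $-w_0\lambda_2 = \lambda_2$, which is exactly the hypothesis needed to apply Lemma~\ref{lem:computesummand} in its symmetric form).

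The second step is to plug $\ell^\vee$ and $\lambda_2^\vee$ into the formula of Lemma~\ref{lem:computesummand}. For a positive intersection $p$, each summand is $\kf(\alpha^\vee,\Ad_{R(\varphi_p)}\beta^\vee)$, and for a negative intersection it is $-\kf(\alpha^\vee,\Ad_{R(\varphi_p)}w_0\beta^\vee)$. Using (\ref{eq:embedding}) one finds that $R(\varphi)=\iota_{\PSL_3(\bR)}$ of a $\PSL_2$ rotation by $\varphi/2$ is the $3\times 3$ matrix obtained from the symmetric square, and conjugating $\diag(1,0,-1)$ or $\frac13\diag(-1,2,-1)$ by it and taking the trace against the original vectors is a direct $3\times 3$ matrix computation. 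This produces the stated functions of $\varphi_p$: $2\cos\varphi_p$ for the pair $(\ell,\ell)$, and $\frac32(1+3\cos2\varphi_p)$ for the pair $(\lambda_2,\lambda_2)$, with the sign $\operatorname{sign} p$ appearing in the bulging case because $-w_0\lambda_2 = \lambda_2$ makes the two sign-cases of Lemma~\ref{lem:computesummand} differ by an overall factor $\operatorname{sign} p$ rather than cancelling. For the mixed pairs $(\ell,\lambda_2)$ and $(\lambda_2,\ell)$ giving (\ref{eq:noeffect}), the key observation is $\kf(\ell^\vee,\lambda_2^\vee)=\Tr(\diag(1,0,-1)\cdot\frac13\diag(-1,2,-1))=0$, and more strongly $\kf(\ell^\vee,\Ad_{R(\varphi)}\lambda_2^\vee)=0$ for all $\varphi$: since $\ell^\vee$ is $w_0$-invariant and $\lambda_2^\vee$ changes sign under the relevant twist of $R$, the integral of the intersection contributions vanishes termwise. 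This gives the vanishing of both $\frac{\rd}{\rd t}\ell_x(\mathcal{B}^y_t)$ and $\frac{\rd}{\rd t}(\lambda_2)_x(\mathcal{T}^y_t)$.

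The third step is bookkeeping: assemble the termwise identities into the displayed sums over $p\in x\sharp y$, noting that for (\ref{eq:twisting}) the sign disappears because $2\cos\varphi_p$ is already sign-weighted correctly (the $w_0$-invariance of $\ell^\vee$ makes the positive and negative cases of Lemma~\ref{lem:computesummand} coincide, so the formula reads $\sum 2\cos\varphi_p$ with \emph{no} $\operatorname{sign} p$, matching the classical Wolpert cosine formula), whereas for (\ref{eq:bulging}) the sign survives. I expect the only real subtlety — the ``main obstacle'' — to be getting the $w_0$-action and the two sign-conventions in Lemma~\ref{lem:computesummand} exactly right, so that the $\operatorname{sign} p$ factor appears in (\ref{eq:bulging}) but not in (\ref{eq:twisting}); once the bookkeeping of signs is pinned down, the remaining $3\times3$ matrix computations are entirely mechanical and follow directly from (\ref{eq:embedding}), using $\cos^2(\varphi/2)=\frac{1+\cos\varphi}{2}$ and the double-angle formula to rewrite the answer in terms of $\cos\varphi_p$ and $\cos2\varphi_p$.
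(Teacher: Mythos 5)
Your overall strategy is the same as the paper's: the paper proves the corollary by exactly the computation you outline, namely Lemma~\ref{lem:generalderivative} together with Lemma~\ref{lem:computesummand} and the embedding (\ref{eq:embedding}), with $\ell^\vee=\diag(1,0,-1)$ and $\lambda_2^\vee=\tfrac13\diag(-1,2,-1)$ (both of which you identify correctly). However, the step you yourself single out as the main obstacle, the $w_0$/sign bookkeeping, is wrong and internally inconsistent as written. For $\PSL_3(\bR)$ the longest element acts on $\mathfrak{a}$ by reversing the diagonal, so $w_0\ell=-\ell$ (equivalently $w_0\ell^\vee=-\ell^\vee$) and $w_0\lambda_2=\lambda_2$ (equivalently $w_0\lambda_2^\vee=\lambda_2^\vee$). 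You instead assert ``$w_0\ell^\vee=\ell^\vee$'' and ``$-w_0\lambda_2=\lambda_2$'', which contradict these facts (and each other, since elsewhere you say $-w_0\ell=\ell$). This is not cosmetic: the negative-intersection case of Lemma~\ref{lem:computesummand} reads $\kf(\alpha^\vee,-\Ad_{R}w_0\beta^\vee)$, so with your stated $w_0$-action the factor $\sign p$ would appear in (\ref{eq:twisting}) and disappear from (\ref{eq:bulging}) --- the opposite of the corollary and of your own final assertion. The correct mechanism is: $w_0\ell^\vee=-\ell^\vee$ makes the two cases of Lemma~\ref{lem:computesummand} coincide for the pair $(\ell,\ell)$, hence no sign in (\ref{eq:twisting}); $w_0\lambda_2^\vee=\lambda_2^\vee$ makes the negative case the negative of the positive case for $(\lambda_2,\lambda_2)$, which is where $\sign p$ in (\ref{eq:bulging}) comes from. (Also, Lemma~\ref{lem:computesummand} needs no hypothesis of the form $\alpha=-w_0\alpha$; that hypothesis belongs to Theorem~\ref{thm:genericjordan}(ii).) Your justification of (\ref{eq:noeffect}) leans on the same faulty $w_0$ statements; the correct reason the mixed terms vanish for every $\varphi$ is that $\ell^\vee$ lies in the principal $\mathfrak{sl}_2$-summand while $\lambda_2^\vee$ lies in the complementary $5$-dimensional $\iota_G(\PSL_2(\bR))$-invariant summand, the two being $\kf$-orthogonal and preserved by $\Ad_{R(\varphi)}$ (and $w_0\lambda_2^\vee=\lambda_2^\vee$ handles the negative-intersection case); alternatively, a direct computation with (\ref{eq:embedding}) gives $\kf(\ell^\vee,\Ad_{R(\varphi)}\lambda_2^\vee)=0$.

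A second gap is that you never carry out the ``mechanical'' $3\times3$ computation, and for (\ref{eq:bulging}) it does not simply hand you the displayed constant. With $\kf=\Tr$ and $R(\varphi)$ as in Lemma~\ref{lem:computesummand}, one finds $\kf(\lambda_2^\vee,\Ad_{R(\varphi)}\lambda_2^\vee)=\cos^2\varphi-\tfrac13=\tfrac16(1+3\cos 2\varphi)$, whose value at $\varphi=0$ is $\kf(\lambda_2^\vee,\lambda_2^\vee)=\tfrac23$; so if $\mathcal{B}^y_t$ is taken literally as the Hamiltonian flow of $(\lambda_2)_y$, the summand is $\tfrac16(\sign p)(1+3\cos 2\varphi_p)$, and reaching the coefficient $\tfrac32$ requires attending to the normalization of the bulging invariant (e.g.\ using $2\lambda_2-\lambda_1-\lambda_3=3\lambda_2$ as both Hamiltonian and observable). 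By contrast, the twisting constant does check out: $\kf(\ell^\vee,\Ad_{R(\varphi)}\ell^\vee)=2\cos\varphi$, consistent with $\kf(\ell^\vee,\ell^\vee)=2$ at $\varphi=0$. So the functional forms you predict are right, but the sign placement and the constant in (\ref{eq:bulging}) --- precisely the points you deferred as routine --- are where your write-up as it stands would fail.
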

    
    In Corollary~\ref{cor:variation}, (\ref{eq:twisting}) is nothing but Wolpert's cosine formula, also featured in Kerckhoff's work \cite{wolpert, kerckhoff}. When $x$ is simple, (\ref{eq:noeffect}) was manifested in \cite[Lemma 5.1.1]{wentworth}. 

    Given a closed orientable surface $S$ with genus $>1$, there are infinitely many pairs of non-conjugate elements $x,y\in \pi_1(S)$ such that $\operatorname{length}_X(x)= \operatorname{length}_X(y)$ for all hyperbolic structures $X$ on $S$ \cite{randol}. By (\ref{eq:twisting}) we have an immediate but rather surprising consequence:
    \begin{corollary}\label{cor:sameangle}
        There are infinitely many pairs $x,y\in \pi_1(S)$ of non-conjugating elements such that
        \[
        \sum_{p\in c\sharp x }\cos \varphi_p ([\rho]) = \sum_{q\in c\sharp y} \cos \varphi_q([\rho])
        \]
        for all essential simple closed curves $c$ and all $[\rho]$ in the Teichm\"uller space. 
    \end{corollary}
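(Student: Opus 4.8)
The plan is to combine Corollary~\ref{cor:variation}, specifically the cosine formula~(\ref{eq:twisting}), with Randol's observation that there exist pairs of non-conjugate $x,y\in\pi_1(S)$ whose translation lengths agree on the entire Teichm\"uller space. The observation is that for any essential simple closed curve $c$, the two functions
\[
[\rho]\mapsto \ell_x(\mathcal{T}^c_t([\rho]))\quad\text{and}\quad [\rho]\mapsto \ell_y(\mathcal{T}^c_t([\rho]))
\]
are, under the Fuchsian identification $\Hit_3(S)\supset\{\text{Fuchsian locus}\}\cong\mathcal{T}(S)$, nothing but (twice) the hyperbolic length functions of $x$ and $y$ along the twist flow $\mathcal{T}^c_t$, because on the Fuchsian locus $\ell_z=\lambda_1(\rho(z))-\lambda_3(\rho(z))$ equals $2\,\mathrm{length}_X(z)$ for the hyperbolic structure $X$ with holonomy $\iota_{\PSL_2(\bR)}\circ\rho_{hyp}$ (this is immediate from the embedding~(\ref{eq:embedding}), under which an element with eigenvalues $e^{\pm\ell/2}$ maps to one with eigenvalues $e^{\pm\ell}, 1$).

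First I would fix such a Randol pair $x,y$, so that $\mathrm{length}_X(x)=\mathrm{length}_X(y)$ for every hyperbolic structure $X$ on $S$; there are infinitely many non-conjugate such pairs. Next, for an arbitrary essential simple closed curve $c$ and an arbitrary Fuchsian $[\rho]$, I would differentiate the identity $\ell_x(\mathcal{T}^c_t([\rho]))=\ell_y(\mathcal{T}^c_t([\rho]))$ at $t=0$: the left side equals $2\tfrac{d}{dt}|_{t=0}\mathrm{length}_{X_t}(x)$ and the right side $2\tfrac{d}{dt}|_{t=0}\mathrm{length}_{X_t}(y)$, where $X_t$ is the hyperbolic structure obtained by twisting along $c$, and these are equal since the length functions themselves are equal along the whole path. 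Then applying Corollary~\ref{cor:variation}, equation~(\ref{eq:twisting}), to each side gives
\[
\sum_{p\in x\sharp c}2\cos\varphi_p([\rho])=\sum_{q\in y\sharp c}2\cos\varphi_q([\rho]),
\]
which after relabelling $c$ and dividing by $2$ is exactly the asserted identity. One should take a little care that the geodesic representatives of $x$ and $y$ may have self-intersections or multiple intersections with $c$, but this is harmless: the remark following Theorem~\ref{thm:goldmanproduct} allows resolving multiple intersection points, and Corollary~\ref{cor:variation} is stated for $x$ merely essential (only $y$, playing the role of $c$ here, need be simple), so the formula applies verbatim.

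I do not anticipate a genuine obstacle; the only point requiring a sentence of justification is the translation between the Hitchin-theoretic quantity $\ell_z$ on the Fuchsian locus and the classical hyperbolic length, and the fact that the twist flow $\mathcal{T}^c_t$ on $\Hit_3(S)$ restricts to the classical Fenchel--Nielsen twist flow on $\mathcal{T}(S)$ (this is the content of the first half of the discussion in Section~\ref{sec:discussion}, and is classical). Given these, the corollary is a one-line consequence of differentiating Randol's equality and reading off~(\ref{eq:twisting}).
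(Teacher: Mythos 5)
Your proposal is correct and is essentially the paper's own argument: the corollary is obtained exactly by taking Randol's iso-length pairs, noting that on the Fuchsian locus $\ell_z$ is twice the hyperbolic length and the twist flow preserves that locus, and differentiating via the cosine formula~(\ref{eq:twisting}). The relabelling of $x\sharp c$ versus $c\sharp x$ affects both sides identically, so your one remaining informal step is harmless.
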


    Leininger \cite[Theorem 1.4]{leininger} proved that for such a pair $x,y$ in Corollary~\ref{cor:sameangle}, the following also holds:
    \begin{theorem}[\cite{leininger}]\label{thm:trace}
        There are infinitely many pairs of non-conjugate elements $x,y\in \pi_1(S)$ such that $\operatorname{Tr}(\rho(x))= \operatorname{Tr}(\rho(y))$ for all quasi-Fuchsian $\rho:\pi_1(S)\to \PSL_2(\mathbb{C})$.
    \end{theorem}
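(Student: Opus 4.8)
The plan is to take as input the fact, due to Randol \cite{randol} and ultimately to Horowitz's trace identities, that for $S$ of genus $>1$ there are infinitely many non-conjugate pairs $x,y\in\pi_1(S)$ with $\operatorname{length}_X(x)=\operatorname{length}_X(y)$ for every hyperbolic structure $X$, and to upgrade this equality of hyperbolic lengths (an identity over the Fuchsian locus) to the asserted equality of traces over all of quasi-Fuchsian space. Write $X$ for the $\operatorname{SL}_2(\mathbb C)$-character variety of $\pi_1(S)$ and $X_0$ for the irreducible component, of dimension $6g-6$, containing the Fuchsian representations. By Bers simultaneous uniformization the quasi-Fuchsian space $QF(S)$ is an open subset of the smooth locus of $X_0$, and the Fuchsian locus $\mathcal F$ is a real-analytic, totally real submanifold of $X_0$ of real dimension $\dim_{\mathbb C}X_0=6g-6$; a totally real submanifold of full real dimension lies in no proper subvariety, so $\mathcal F$ is Zariski dense in the irreducible $X_0$. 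For a lifted Fuchsian holonomy $\tilde\rho$ and a hyperbolic $\gamma$ one has $\operatorname{Tr}\tilde\rho(\gamma)=\pm 2\cosh(\operatorname{length}_X(\gamma)/2)$, so the regular functions $\chi_\gamma([\rho])=\operatorname{Tr}\rho(\gamma)$ on $X$ satisfy $\chi_x^{2}=\chi_y^{2}$ on $\mathcal F$, hence on all of $X_0$ by Zariski density; since the coordinate ring of the irreducible $X_0$ is a domain, $\chi_x=\chi_y$ or $\chi_x=-\chi_y$ identically on $X_0$.

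To pin the sign it suffices to exhibit one point of $X_0$ at which $\chi_x$ and $\chi_y$ agree, not merely agree in absolute value. At a lifted Fuchsian representation the sign of $\operatorname{Tr}\tilde\rho(\gamma)$ is governed, through the class of $\gamma$ in $H_1(S;\mathbb Z/2)$, by a spin structure, that is, a quadratic refinement $q$ of the mod-$2$ intersection form; since $q([x])$ and $q([y])$ can be made equal by a suitable choice of spin structure, there is a lifted Fuchsian representation with $\operatorname{Tr}\tilde\rho(x)=\operatorname{Tr}\tilde\rho(y)$, and therefore $\chi_x=\chi_y$ on all of $X_0$. Restricting to $QF(S)\subset X_0$ yields $\operatorname{Tr}\rho(x)=\operatorname{Tr}\rho(y)$ for every quasi-Fuchsian $\rho$ (for compatible lifts to $\operatorname{SL}_2$), and since $x,y$ are non-conjugate in $\pi_1(S)$ and Randol's construction provides infinitely many such pairs, the theorem follows. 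A quicker route, exploiting the Horowitz origin of these pairs directly, is to take $x,y$ to be words $w_1(a,b),w_2(a,b)$ in a free subgroup $\pi_1(\Sigma)\cong F_2$ carried by an essential subsurface $\Sigma\subset S$ and having identical universal Fricke trace polynomials, so that $\operatorname{Tr}\rho(x)=\operatorname{Tr}\rho(y)$ holds \emph{tautologically} for every $\operatorname{SL}_2(\mathbb C)$-representation of $\pi_1(S)$; here one only checks that $w_1,w_2$ remain non-peripheral and non-conjugate after being pushed into $\pi_1(S)$, which follows from the control on conjugacy of elements of a vertex group in the graph-of-groups decomposition of $\pi_1(S)$ dual to $\partial\Sigma$.

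I expect the genuinely delicate points to be (a) the sign bookkeeping — length equivalence only yields $|\operatorname{Tr}\rho(x)|=|\operatorname{Tr}\rho(y)|$ on $\mathcal F$, so one must know the global sign on $X_0$ is $+$, which is exactly what the spin-structure argument secures — and (b), in the Horowitz route, the topological verification that the two distinguished elements stay non-conjugate in $\pi_1(S)$ rather than only in the free subsurface group. The trace identity over $\operatorname{SL}_2(\mathbb C)$ itself is comparatively soft. A further standard but not purely formal ingredient of the first route is the total reality, and hence Zariski density, of the Fuchsian locus inside the relevant component of the $\operatorname{SL}_2(\mathbb C)$-character variety.
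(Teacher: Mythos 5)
This statement is not proved in the paper at all: it is quoted verbatim from Leininger's paper, so there is no internal argument to compare against. Judged on its own, your second (``Horowitz'') route is essentially the argument underlying the cited result and it is correct: Horowitz produces non-conjugate words $w_1,w_2$ in a rank-two free group with identical Fricke trace polynomials, so after embedding the free group in $\pi_1(S)$ (Randol's observation) one gets $\operatorname{Tr}\rho(w_1)=\operatorname{Tr}\rho(w_2)$ tautologically for \emph{every} representation into $\operatorname{SL}_2(\mathbb{C})$, in particular for every lift of a quasi-Fuchsian representation; the only genuinely topological point is the one you flag, namely that the words can be chosen non-peripheral in the subsurface, so that non-conjugacy in the vertex group of the graph-of-groups decomposition dual to the boundary persists in $\pi_1(S)$. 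That is exactly how the infinitely many pairs are obtained in the literature, and it immediately gives the stronger statement (all $\operatorname{SL}_2(\mathbb{C})$-characters agree), of which the quasi-Fuchsian claim is a special case.

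Your first route, by contrast, has a genuine gap at the sign-pinning step. Zariski density of the totally real Fuchsian locus does give $\chi_x^2=\chi_y^2$ on the irreducible component, hence $\chi_x=\pm\chi_y$; but the mechanism you propose to exclude the minus sign does not work as stated. First, for a lifted Fuchsian representation the sign of $\operatorname{Tr}\tilde\rho(\gamma)$ is \emph{not} a function of $[\gamma]\in H_1(S;\mathbb{Z}/2)$ together with a spin structure: the Johnson-type dictionary between lifts, spin structures and trace signs applies to simple closed curves, while the curves produced here are decidedly non-simple, and elements in the same mod-$2$ homology class can have lifted traces of opposite sign. Second, changing the lift multiplies all traces by a character of $H_1(S;\mathbb{Z}/2)$, so the relative sign $\operatorname{sign}\operatorname{Tr}\tilde\rho(x)\cdot\operatorname{sign}\operatorname{Tr}\tilde\rho(y)$ can only be altered when $[x]\neq[y]$ in $H_1(S;\mathbb{Z}/2)$. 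But for length- or trace-equivalent pairs the abelianized classes agree up to sign (test on diagonal representations), so typically $[x]=[y]$ mod $2$ and no choice of lift helps; length equivalence over Teichm\"uller space alone then leaves the possibility $\chi_x=-\chi_y$ unexcluded by your argument. So if you want to keep the first route you need a different input to fix the sign (for the Horowitz pairs it holds because the trace polynomials are literally equal, i.e., you are back to the second route); as written, only the second route constitutes a proof.
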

Motivated from Theorem~\ref{thm:trace}, we may ask the following question question regarding Hitchin representations.
    \begin{question}\label{qu:trace}
        Is there a pair of non-conjugate elements $x,y\in \pi_1(S)$ such that $\operatorname{Tr}(\rho(x))= \operatorname{Tr}(\rho(y))$ for all $[\rho]\in \Hit_3(S)$? What if we replace $\operatorname{Tr}$ with $\lambda_2$?
    \end{question}

In view of Corollary~\ref{cor:variation}, some questions regarding generic properties of $\Hit_3(S)$ boil down to hyperbolic geometry problems on $S$. For instance, the following question is very closely related to  Question~\ref{qu:trace} via (\ref{eq:bulging}). 
    \begin{question}\label{qu:angle}
        Let $x,y\in \pi_1(S)$ be a non-conjugate pair. Is there a hyperbolic structure $[\rho]$ and an oriented essential simple closed curve $c$ such that 
                \[
        \sum_{p\in c\sharp x }(\operatorname{sign} p) \cos 2\varphi_p ([\rho]) \ne \sum_{q\in c\sharp y} (\operatorname{sign} q)  \cos 2\varphi_q([\rho])
        \]
        holds?
    \end{question}
At first glance, the answer to Question~\ref{qu:angle} should be easy yes. However, given Corollary~\ref{cor:sameangle},  resolving Question~\ref{qu:angle} might not be as completely obvious as it looks. 

\appendix

\section{Analyticity of the Jordan Projection}\label{app:open}
In this appendix we prove Lemma~\ref{lem:openness}. We restate the claim first.
\begin{lemma} Let $G$ be a split real form of a complex simple adjoint Lie group. Let $\lambda:G\to \overline{\mathfrak{a}^+}$ be the Jordan projection and let $U$ be the set of purely loxodromic elements. Then,
\begin{itemize}
    \item[(i)] $U$ is open,
    \item[(ii)] $\lambda$ is real analytic on $U$.
\end{itemize}
\end{lemma}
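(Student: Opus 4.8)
The plan is to realize $U$ as the image of an explicit real analytic submersion, deduce openness from this, and then read off $\lambda$ from a real analytic local section of that submersion. Set $A:=\exp\mathfrak{a}$. By the Cartan (polar) decomposition $G=\exp(\mathfrak{p})\cdot K$ the map $\exp|_{\mathfrak{p}}$ is a real analytic embedding, so $\exp|_{\mathfrak{a}}\colon\mathfrak{a}\to A$ is a real analytic diffeomorphism and $A^{+}:=\exp\mathfrak{a}^{+}$ is open in $A$. By the definition of purely loxodromic, $U$ is exactly the image of the real analytic map
\[
\Phi\colon G\times A^{+}\longrightarrow G,\qquad \Phi(g,a)=gag^{-1}.
\]

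The first step is to show $\Phi$ is a submersion. By the $G$-equivariance $\Phi(g_{0}g,a)=g_{0}\,\Phi(g,a)\,g_{0}^{-1}$, it suffices to check surjectivity of $d\Phi$ at a point $(1,a)$ with $a=\exp H$, $H\in\mathfrak{a}^{+}$. Varying $g$ along a direction $X\in\mathfrak{g}=T_{1}G$ contributes $(\mathrm{id}-\Ad(a))X$, and varying $a$ inside $A^{+}$ contributes all of $\mathfrak{a}$, to the tangent space $T_{a}G\cong\mathfrak{g}$. Since $G$ is split we have $\mathfrak{g}_{0}=\mathfrak{a}$, while $\mathrm{id}-\Ad(a)$ acts on $\mathfrak{g}_{\beta}$ as multiplication by $1-\re^{\beta(H)}$, which is invertible precisely because $H\in\mathfrak{a}^{+}$ forces $\beta(H)\neq 0$ for every restricted root $\beta\in\Pi$. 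Hence the image of $d\Phi_{(1,a)}$ is $\bigoplus_{\beta\in\Pi}\mathfrak{g}_{\beta}\oplus\mathfrak{a}=\mathfrak{g}$, so $d\Phi_{(1,a)}$ is onto. A submersion is an open map, so $U=\Phi(G\times A^{+})$ is open; this proves (i).

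For (ii), fix $x_{0}\in U$ and write $x_{0}=g_{0}a_{0}g_{0}^{-1}$ with $a_{0}\in A^{+}$. Since $\Phi$ is a real analytic submersion, the real analytic implicit function theorem gives an open neighbourhood $W\ni x_{0}$ in $G$ and a real analytic map $(\gamma,\alpha)\colon W\to G\times A^{+}$ with $\gamma(x)\,\alpha(x)\,\gamma(x)^{-1}=x$ for all $x\in W$. Then $\alpha(x)=\gamma(x)^{-1}x\gamma(x)\in A^{+}$, so $\log\alpha(x)\in\mathfrak{a}^{+}$ and $\exp(\log\alpha(x))=\alpha(x)$ is $G$-conjugate to $x$. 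Now every $x\in U$ is conjugate to an element of $\exp\mathfrak{a}^{+}$, hence is semisimple and $\bR$-split, so its complete multiplicative Jordan decomposition is $x=x_{h}$ with $x_{k}=x_{u}=1$. Therefore $\lambda(x)$ is, by definition, the unique point of $\overline{\mathfrak{a}^{+}}$ whose exponential lies in the conjugacy class of $x$; since $\log\alpha(x)\in\mathfrak{a}^{+}\subset\overline{\mathfrak{a}^{+}}$ has this property, uniqueness gives $\lambda(x)=\log\alpha(x)$ throughout $W$. As $\log|_{A}$ and $\alpha$ are real analytic, $\lambda$ is real analytic on $W$, and since $x_{0}\in U$ was arbitrary, $\lambda$ is real analytic on all of $U$.

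The substantive point is the transversality computation for $\Phi$, which is immediate from the restricted root space decomposition once regularity of $H\in\mathfrak{a}^{+}$ is used. The step demanding the most care is the last identification: to conclude $\lambda=\log\circ\,\alpha$ locally one must invoke the conjugation-invariance and the uniqueness clause built into the definition of the Jordan projection, together with the observation that elements of $U$ have trivial elliptic and unipotent Jordan parts. Everything else is the standard real analytic implicit (inverse) function theorem applied to the analytic Lie group $G$.
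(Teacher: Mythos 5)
Your proposal is correct, and for part (ii) it is essentially the paper's argument: you both study the conjugation map ($\Phi(g,a)=gag^{-1}$ in your notation, $F(a,x)=xax^{-1}$ in the paper's), verify surjectivity of its differential at points over $A^+=\exp\mathfrak{a}^+$ using the restricted root space decomposition and the regularity $\beta(H)\neq 0$ for $H\in\mathfrak{a}^+$ (your statement that $\mathrm{id}-\Ad(a)$ acts on $\mathfrak{g}_\beta$ by $1-\re^{\beta(H)}$ is exactly the paper's Jacobian entry $-\delta_{ij}\beta_i(A_0)$ in coordinates), and then apply the real analytic implicit function theorem to obtain an analytic local section whose $A^+$-component, after taking $\log$, is $\lambda$; your care in identifying $\log\alpha(x)$ with $\lambda(x)$ via the uniqueness clause in the definition of the Jordan projection and the triviality of the elliptic and unipotent parts is the right justification, which the paper leaves implicit. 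Where you genuinely diverge is part (i): the paper proves openness separately, by passing to the adjoint representation and showing that the set of elements whose Jordan projection lies in the walls is closed via continuity of the roots of the characteristic polynomial, whereas you get openness for free from the same submersion, since $U$ is precisely the image of $\Phi$ and submersions are open maps (with the equivariance $\Phi(g_0g,a)=g_0\Phi(g,a)g_0^{-1}$ reducing the rank computation to points $(1,a)$, exactly as in the analyticity step). Your route is the more economical one, unifying (i) and (ii) in a single transversality computation; the paper's route for (i) avoids any differential computation but needs the eigenvalue-continuity argument instead. Both are sound.
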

\begin{proof}
    (i) Consider the adjoint representation $\Ad:G\to \operatorname{SL}(\mathfrak{g})$. The Jordan projection is equivariant in the sense that $\lambda(\Ad_x) = \operatorname{ad}_{\lambda(x)}$ for all $x\in G$. The image $C$ of $\partial\overline{\mathfrak{a}^+}$ under the representation $\operatorname{ad}$ is a linear subspace of
    \[
    \{\diag (0,0,\cdots, 0, \beta_ 1, \cdots, \beta_d)\,|\, \beta_i\ge 0 \text{ and }\beta_1\beta_2 \cdots \beta_d = 0\}
    \]
    where $d= \dim\mathfrak{g}-\dim\mathfrak{a}$. In particular, $C$ is a closed set. Therefore we only need to show that 
    \[
    D:=\{x\in \operatorname{SL}(\mathfrak{g})\,|\, \lambda(x) \in C\}
    \]
    is closed. This can be proven by using the fact that the roots of a polynomial depend continuously on the coefficients. In particular, if $g_i$ is a given sequence converging to $g$ with $\lambda(g_i)\in C$, then because the eigenvalues of $g_i$ converges to that of $g$, we have $\lambda(g_i) \to \lambda(g)$ as described in Example~\ref{ex:psl}. Since $C$ is closed, we know that $\lambda(g)\in C$. Thus, $D$ is closed. Since $U$ is the complement of the closed set $\Ad^{-1}(D)$, $U$ is open. 

    (ii) Let $A^+ = \exp \mathfrak{a}^+$. It is enough to show that $\lambda$ is analytic at a neighborhood of each point of $A^+$. Consider the analytic map $F:A^+\times G \to U$ defined by
    \[
    (a,x) \mapsto xax^{-1}.
    \]
    Let $a_0:=\exp A_0\in A^+\subset U$ be arbitrary. Take exponential charts containing $(a_0,1)\in A^+ \times G$ and  $a_0\in U$. If we express $F$ in these charts, we would get
    \[
    F: (A,X) \mapsto e^{\ad_X}A
    \]
    where $A\in \mathfrak{a}^+$, $X\in \mathfrak{g}$ and $e^{\ad_X}=\sum_{k\ge 0}\frac{1}{k!}(\ad_X)^k$. Recall that we have the restricted root space decomposition $\mathfrak{g}=\mathfrak{a} \oplus \bigoplus_{\beta\in \Pi}\mathfrak{g}_\beta$. Let $\Delta=\{H_1,\cdots, H_r\}$ be elements  in $\mathfrak{a}$ dual to simple roots. Write $\Pi=\{\beta_1, \cdots, \beta_d\}$ and choose a non-zero element $E_i$ from each $\mathfrak{g}_{\beta_i}$. Then $\Delta\cup\{E_i\}_{i=1,2,\cdots,d}$ forms a basis for $\mathfrak{g}$. With respect to this basis, we write $A$ and $X$ as coordinates 
    \begin{align*}
        A&= A_0+\sum_{i=1} ^r y_i H_i,\\
        X&=\sum_{i=1}^r x_i H_i+\sum_{i=1}^d z_i E_i,\\
        F(A,X)&=\sum_{i=1}^ r f_i(A,X)H_i + \sum_{i=1}^d g_i(A,X) E_i.
    \end{align*}

    Now we compute the Jacobian of $F$ at $(a_0,1)$:
    \[
    \frac{\partial f_i}{\partial y_j} = \delta_{ij},\qquad \frac{\partial g_i}{\partial y_j} = 0,
    \]
    
    \[
    \frac{\partial f_i}{\partial x_j} = 0,\qquad \frac{\partial g_i}{\partial x_j} = 0, 
    \]
    and
    \[
    \frac{\partial f_i}{\partial z_j} = 0,\qquad \frac{\partial g_i}{\partial z_j} = -\delta_{ij}\beta_i(A_0),
    \]
where $\delta_{ij}$ is the Kronecker delta. Since $A_0\in \mathfrak{a}^+$, we have $\beta_i(A_0)\ne 0$ for all $i$. Therefore, $F$ has the full rank at $(a_0,1)$ and by the analytic implicit function theorem, we can find an analytic local section $s:O\to A^+\times G$ of $F$ in a neighborhood $O$  of $a_0$. The logarithm of the $A^+$-factor of $s(a)$ is precisely the Jordan projection of $a\in O$. Thus $\lambda$ is analytic.
\end{proof}

    \bibliographystyle{alpha}
    \bibliography{ref.bib}

\end{document}